\newcommand{\norm}[1]{\left\lVert#1\right\rVert}
\newcommand{\vertiii}[1]{\left|#1\right|}
\newcommand{\bmnabla}{\bm\nabla}
\newcommand{\bint}[2]{\langle#1\rangle_{#2}}
\newcommand{\lam}{\lambda}
\newcommand{\bmsigma}{\bm\sigma}
\newcommand{\inpro}[1]{\langle#1\rangle_h}
\newcommand{\ninpro}[1]{\langle#1\rangle_{\mathcal A_h}}
\newcommand{\we}{\widetilde}
\newcommand{\bndint}[1]{\sum_{T\in\mathcal T_h}\langle#1\rangle_{\partial T}}
\newtheorem{lem}{Lemma}[section]
\newtheorem{thm}{Theorem}[section]
\newtheorem{pro}{Proposition}[section]
\newtheorem{rem}{Remark}[section]
\newtheorem{assumption}{Assumption}
\newtheorem{algorithm}{Algorithm}
\numberwithin{equation}{section}
\begin{document}
\title
{
  \Large\bf Analysis of a two-level algorithm for   HDG methods for diffusion problems \thanks
  {
    This work was supported in part by National Natural Science Foundation of
    China (11171239, 11401407) and Major Research
    Plan of  National Natural Science Foundation of China (91430105).
  }
}
\author
{
  Binjie Li\thanks{Email: libinjiefem@yahoo.com},
  Xiaoping Xie \thanks{Corresponding author. Email: xpxie@scu.edu.cn},
  Shiquan Zhang\thanks{Email:  shiquanzhang@scu.edu.cn}\\
  {School of Mathematics, Sichuan University, Chengdu 610064, China}
}
\date{}
\maketitle
\begin{abstract}
  This paper analyzes an abstract two-level algorithm for
  hybridizable discontinuous Galerkin (HDG) methods in a unified fashion.
  We use an extended version of the Xu-Zikatanov (X-Z)   identity to derive a sharp estimate of the convergence rate of the algorithm, and show that the theoretical results also apply to   weak Galerkin (WG) methods.
  The main features of our analysis are twofold: one is that  we only need the minimal   regularity of the model problem;
  the other is that we do not require the triangulations to be quasi-uniform.
  Numerical experiments are provided  to confirm the   theoretical results.
  \vskip 0.4cm {\bf Keywords.} two-level algorithm, hybridizable discontinuous Galerkin method, weak Galerkin method,  multigrid, X-Z identity
\end{abstract}
\section{Introduction}

The Hybridizable Discontinuous
Galerkin (HDG) framework,    proposed in
\cite{Cockburn;unified HDG;2009} (2009) for second order elliptic problems, provides a unifying strategy for
hybridization of finite element methods.   The  unifying framework includes as particular cases hybridized versions of mixed methods \cite{ArnoldBrezzi1985, BrezziDouglasMarini1985,COCKBURN-GOPALAKRISHNAN2004}, the continuous Galerkin (CG) method \cite{Cockburn-Gopalakrishnan-Wang2007}, and a   wide class
of hybridizable discontinuous Galerkin (HDG) methods.
Here hybridization denotes the process to rewrite a finite element method as a hybrid version. It should be pointed that
the Raviart-Thomas (RT) \cite{RT} and Brezzi-Douglas-Marini (BDM) mixed methods were first shown in \cite{ArnoldBrezzi1985, BrezziDouglasMarini1985} to have equivalent
hybridized versions, and an overview of some  hybridization techniques was presented
in \cite{COCKBURN_2005}.
In the  so-called HDG methods following the HDG framework,   the constraint of function continuity on the inter-element boundaries is relaxed by introducing  Lagrange multipliers defined on the
the inter-element boundaries, thus allowing for piecewise-independent approximation to the potential or flux solution. By local elimination of the unknowns defined in the interior of elements, the HDG methods finally
lead to symmetric and positive definite (SPD) systems where the unknowns are only the globally coupled degrees of freedom describing the Lagrange multipliers.
We refer to \cite{super_con_LDG, projection_based_hdg,Li;hdg;2014} for the convergence analysis of several HDG methods for the second order elliptic problems.

Closely related to the HDG framework is the    weak Galerkin (WG) finite element method
\cite{Wang;2013, Mu-Wang-Wang-Ye,Mu-Wang-Ye1, Mu-Wang-Ye2} pioneered by Wang and Ye \cite{Wang;2013}. The WG method is designed by using a weakly defined gradient operator over
functions with discontinuity, and then allows the use of totally
discontinuous piecewise polynomials in the finite element procedure. 
By introducing the discrete weak gradient as an independent variable, as shown in \cite{Li;mg_wg;2014}, the WG method can be rewritten as some
HDG version when the diffusion-dispersion tensor in the corresponding second order elliptic equation is a piecewise-constant matrix.

It  is well-known that the design of fast solvers  is a key component to numerically solving  partial differential equations. For the HDG methods as well as the WG methods, so far there are only limited literature concerning this issue.
In \cite{Gopalakrishnan;2009} (2009), Gopalakrishnan and Tang analyzed a $\cal V$-cycle multigrid
algorithm for two type of HDG methods for the Poisson problem with full elliptic
regularity. By following the same idea,
Cockburn et al. \cite{Cockburn;2014} (2014) presented the first convergence study of a nonnested
$\cal V$-cycle multigrid algorithm for one type of HDG method for diffusion equations without full elliptic regularity.
Chen  et al. \cite{Chen;2014} (2014) constructed two auxiliary space multigrid
preconditioners for two types of WG methods for the diffusion equations. In \cite{Li;mg_wg;2014} Li and Xie proposed a
two-level algorithm for two types of WG methods without full elliptic regularity, and,   in
\cite{Li;bpx;2014}, they analyzed an optimal BPX preconditioner for   a large class of  nonstandard finite element methods  for the diffusion equations,
including the   hybridized Raviart-Thomas  and Brezzi-Douglas-Marini   mixed element
methods, the hybridized discontinuous Galerkin  method, the Weak Galerkin  method, and the
nonconforming Crouzeix-Raviart   element method.

In this paper, we shall propose and analyze an abstract two-level algorithm for the  SPD systems arising from the HDG methods for the following diffusion model:
\begin{equation}\label{eq:model}
  \left\{
    \begin{array}{rcll}
      -\text{div}({\bm a}\bmnabla u) &= & f & \text{in $\Omega$},\\
      u &=& 0 & \text{on $\partial\Omega$},
    \end{array}
  \right.
\end{equation}
where $\Omega\subset R^d~(d=2,3)$ is assumed to be a bounded polyhedral domain, the diffusion-dispersion tensor ${\bm a}\in [L^{\infty}(\Omega)]^{d\times d}$ is a SPD
matrix and $f\in L^2(\Omega)$. In the  two-level algorithm,  the
$H^1$-conforming piecewise linear finite element space  is used as the auxiliary space.  The main tool of our analysis is an extended version of the Xu-Zikatanov (X-Z) identity \cite{Xu;2002}. 
The main features of our work are as follows:
\begin{itemize}
\item We only need the minimal   regularity of the model problem
  \eqref{eq:model} in the sense that  the regularity estimate
  \begin{equation}\label{eq:reg}
    \norm{u}_{1+\alpha,\Omega}\leqslant C_{\Omega}\norm{f}_{\alpha-1,\Omega}
  \end{equation}
  holds with  $\alpha\in [0,1]$, where   $C_{\Omega}$ is a positive constant that only depends on $\Omega$ and
  $\bm a$.
  Based on the convergence results of the two-level algorithm, {\bf Algorithm
\ref{algo:two-level}}, (cf. Theorems \ref{thm:convergence}-\ref{thm:convergence2}),
  it is easy to show that the multigrid methods which fall into the proposed  two-level algorithm
  framework  for the HDG methods all converge.
  We note that the analyses in \cite{Gopalakrishnan;2009} and \cite{Cockburn;2014} require full regularity ($\alpha=1$)  and $\alpha\in (0.5,1]$, respectively.

\item We only assume the grids to be
  conforming and shape regular. Thus,   the quasi-uniform condition, which is assumed in \cite{Gopalakrishnan;2009,Cockburn;2014,Chen;2014,Li;mg_wg;2014,Li;bpx;2014}, is not required in our analysis. Therefore, based on  fast solvers for   the
  auxiliary space, our analysis can be used to design fast solvers on
  adaptively refined grids and completely unstructured grids.

\item Our theoretic results  also apply to the WG methods.
\end{itemize}


The rest of this paper is organized as follows. Section \ref{sec:notation} introduces
notations, an extended version of X-Z identity and   HDG methods.
Section \ref{sec:mg} describes and analyzes the two-level algorithm. Section \ref{sec:appl} presents
some applications of the algorithm to the HDG methods as well as to the WG methods. Section \ref{sec:numerical}
reports some numerical results to verify the theoretic results.
\section{Preliminaries}\label{sec:notation}
\subsection{Notations}\label{ssec:basic}
For an arbitrary open set $D\subset\mathbb R^d$, we denote by $H^1(D)$ the   Sobolev space of scalar
functions on $D$
 whose derivatives up to order $1$ are square integrable, with the norm $\norm{\cdot}_{1,D}$. The
notation $ |\cdot|_{1,D}$ denotes the semi-norm derived from the partial derivatives of order equal to $1$.
 The space $H_0^1(D)$ denotes the closure in $H^1(D)$ of the set of infinitely differentiable
functions with compact supports in $D$. 
We use $(\cdot,\cdot)_D$ and $\bint{\cdot,\cdot} {\partial D}$ to denote
 the $L^2$-inner products on the square integrable function spaces   $L^2(D)$ and $L^2(\partial D)$,  respectively, with $\norm{\cdot}_D$
and $\norm{\cdot}_{\partial D}$  representing the corresponding induced $L^2$-norms.  Let $P_k(D)$ denotes the set of
polynomials of degree $\leqslant k$ defined on $D$.

Let $\mathcal T_h$ be a conforming and shape regular triangulation of $\Omega$. For each $T\in\mathcal T_h$, $h_T$
denotes the diameter of $T$ with  $h:=\max_{T\in\mathcal T_h}h_T$. The regularity
parameter of $\mathcal T_h$ is defined by $\rho:=\max_{T\in\mathcal T_h}h_T^d/|T|$, where $|T|$
is the $d$-dimensional Lebesgue measure of $T$. 
Let $\mathcal F_h$ denote the set of all faces of $\mathcal T_h$.

We define the mesh-dependent inner product $\bint{\cdot,\cdot}{h}$
and the corresponding norm $\norm{\cdot}_h$ as follows: for any $\lam,\mu\in L^2( \mathcal F_h )$,
\begin{equation}\label{eq:def-inner-pro}
  \bint{\lam,\mu}{h}:= \sum_{T\in\mathcal T_h}h_T\int_{\partial T}\lam\mu, \qquad \norm{\lam}_h:=\bint{\lam,\lam}{h}^{\frac{1}{2}}.
\end{equation}
We also need the following notations:
\begin{equation}
  \norm{\lam}_{h,\partial T}:=h_T^{\frac{1}{2}}\norm{\lam}_{\partial T},~\forall\lam\in L^2(\partial T),
\end{equation}
\begin{equation}\label{semi-norm}
  |\mu|_h: = (\sum_{T\in\mathcal T_h}|{\mu}|_{h, \partial T}^2)^{\frac{1}{2}},
  \quad \forall \mu \in L^2( \mathcal F_h ),
\end{equation}
where
\begin{displaymath}
  \begin{array}{rcl}
    |\mu|^2_{h,\partial T} := h_T^{-1}\norm{\mu-m_T(\mu)}^2_{\partial T},\ \
    m_T(\mu) := \frac{1}{|\partial T|}\int_{\partial T}\mu,
  \end{array}
\end{displaymath}
and $|\partial T|$ denotes the (d-1)-dimensional Hausdorff measure of $\partial T$.

Throughout this paper, $x\lesssim y~( x \gtrsim y)$ means $x\leqslant Cy~(x\geqslant Cy)$, where
$C$ denotes a positive constant that only depends on 
$d$, $k$, $\Omega$, the regularity parameter $\rho$, and the coefficient matrix $\bm a$. The notation $x \sim y$ abbreviates
$x \lesssim y\lesssim x$.

\subsection{Extended version of X-Z identity}
We start by introducing some abstract notations. Let $V$ be a finite dimensional Hilbert space
equipped with inner product $(\cdot,\cdot)$ and its induced norm $\norm{\cdot}$. Suppose $A:V\to V$ is
a linear operator which is SPD with respect to $(\cdot,\cdot)$, then $(\cdot,\cdot)_A:=(A\cdot,\cdot)$ also
defines an inner product on $V$ and we use $\norm{\cdot}_A$ to denote the corresponding norm. Let
$B:V\to V$ be a linear operator with norm
\begin{displaymath}
  \norm{B}_A:=\sup_{v\in V}\frac{\norm{Bv}_A}{\norm{v}_A}.
\end{displaymath}
\par
Suppose $V_0$, $V_1$,$V_2$,$\cdots$,$V_N$ are   finite dimensional Hilbert spaces equipped with
inner products $(\cdot,\cdot)_0$, $(\cdot,\cdot)_1$, $\cdots$, $(\cdot,\cdot)_N$ respectively.
Let $I_i:V_i\to V_0~(i=1,2,\cdots, N)$ be linear injective operators such that
\begin{displaymath}
  V_0=I_1V_1+I_2V_2+\cdots I_NV_N.
\end{displaymath}
Naturally, the adjoint operator $I_i^t$ of $I_i$ is defined  by
\begin{displaymath}
  (I_i^tv_0,v_i)_i = (v_0, I_iv_i)_0
  ~\text{ for all } v_i\in V_i\text{ and } v_0\in V_0.
\end{displaymath}
Let $\mathcal A_0:V_0\to V_0$ be SPD with respect to $(\cdot,\cdot)_0$ and define
$\mathcal A_i:V_i\to V_i$ ($i=1,2,\cdots, N$)
by $\mathcal A_i = I_i^t\mathcal A_0I_i$. Since $I_i$ is injective, $\mathcal A_i$ is SPD with
respect to $(\cdot,\cdot)_i$.  For each $i$, suppose $\mathcal R_i:V_i\to V_i$  is a good
approximation of $\mathcal A_i^{-1}$ and define the symmetrization of $\mathcal R_i$ by
\begin{equation}\label{Ri-bar}
\overline{\mathcal R_i}=\mathcal R_i^t+\mathcal R_i-\mathcal R_i^t\mathcal A_i\mathcal R_i.
\end{equation}
Then we define the operator $\mathcal B_0:V_0\to V_0$ as follows:
\par
\framebox{
  \begin{minipage}{90mm}
    For any given $b\in V_0$, $B_0b := v^{2N}$ with $v^{2N}$ defined below:\\
    $~~~~$$v^0:=0$.\\
    $~~~~${\bf for} $i=1,2,\cdots, N$\\
    $~~~~~~~~$$v^{i} := v^{i-1} + I_i\mathcal R_iI_i^t(b-\mathcal A_0v^{i-1})$;\\
    $~~~~${\bf end}\\
    $~~~~${\bf for} $i=N,N-1,\cdots, 1$\\
    $~~~~~~~~$$v^{2N-i+1} := v^{2N-i} + I_i\mathcal R^t_iI^t_i(b-\mathcal A_0v^{2N-i})$;\\
    $~~~~${\bf end}
  \end{minipage}
}
\par
Finally, following \cite{Xu;2002,Xu;2008,Chen;2010,Li;mg_wg;2014}, we are ready to present the following extended version of X-Z identity.
\begin{thm}\label{thm:X-Z identity}
  Suppose $\mathcal R_i$ is such that $\norm{I-\overline{\mathcal R_i}\mathcal A_i}_{\mathcal A_i} < 1$   for $i=1,2,\cdots, N$.
  Then it holds
  \begin{equation}\label{X-Z-Identity}
    \norm{I-\mathcal B_0\mathcal A_0}_{\mathcal A_0}=1-\frac{1}{K},
  \end{equation}
  where
  \begin{equation}
    K=\sup_{\norm{v}_{\mathcal A_0} = 1}\inf_{\sum_{i=1}^NI_iv_i=v}\sum_{i=1}^N\norm{v_i+
      \mathcal R_i^tI_i^t\mathcal A\sum_{j>i}I_jv_j}^2_{\overline{\mathcal R_i}^{-1}}.
  \end{equation}
\end{thm}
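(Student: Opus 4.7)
The plan is to adapt the Xu--Zikatanov identity argument to the abstract setting where the $V_i$ enter $V_0$ through the injections $I_i$ and the exact subspace solvers $\mathcal A_i^{-1}$ are replaced by $\mathcal R_i$ with symmetrization $\overline{\mathcal R_i}$. First I would introduce the one-step subspace-correction operators $T_i := I_i \mathcal R_i I_i^t \mathcal A_0 : V_0 \to V_0$. Using $\mathcal A_i = I_i^t \mathcal A_0 I_i$, a direct computation shows that the $(\cdot,\cdot)_{\mathcal A_0}$-adjoint of $T_i$ is $T_i^* = I_i \mathcal R_i^t I_i^t \mathcal A_0$, so the boxed algorithm produces
\begin{equation*}
I - \mathcal B_0 \mathcal A_0 = (I-T_1^*)(I-T_2^*)\cdots (I-T_N^*)\,(I-T_N)(I-T_{N-1})\cdots (I-T_1) = E^* E,
\end{equation*}
with $E := (I-T_N)\cdots(I-T_1)$. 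Because $E^*E$ is self-adjoint in $(\cdot,\cdot)_{\mathcal A_0}$, one has $\norm{I - \mathcal B_0 \mathcal A_0}_{\mathcal A_0} = \norm{E}_{\mathcal A_0}^2$, reducing the task to computing the norm of the forward sweep.

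For this I would define the iterates $v^0 := v$, $v^i := (I-T_i)v^{i-1}$ (so that $Ev = v^N$) and the local residuals $r_i := I_i^t \mathcal A_0 v^{i-1} \in V_i$. Expanding $\norm{v^{i-1}}_{\mathcal A_0}^2 - \norm{v^i}_{\mathcal A_0}^2$ and invoking the definition $\overline{\mathcal R_i} = \mathcal R_i + \mathcal R_i^t - \mathcal R_i^t \mathcal A_i \mathcal R_i$ collapses this difference cleanly to $(\overline{\mathcal R_i} r_i, r_i)_i$; summing over $i$ then produces the telescoping identity
\begin{equation*}
\norm{v}_{\mathcal A_0}^2 - \norm{Ev}_{\mathcal A_0}^2 = \sum_{i=1}^N (\overline{\mathcal R_i} r_i, r_i)_i.
\end{equation*}
The hypothesis $\norm{I - \overline{\mathcal R_i}\mathcal A_i}_{\mathcal A_i} < 1$ guarantees that $\overline{\mathcal R_i}$ is SPD on $V_i$, so $\norm{\cdot}_{\overline{\mathcal R_i}^{-1}}$ is a bona fide norm.

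The heart of the argument is to link this sum to an arbitrary decomposition $v = \sum_{j=1}^N I_j v_j$. Starting from $\norm{v}_{\mathcal A_0}^2 = \sum_j (v, I_j v_j)_{\mathcal A_0}$ and substituting $v = v^{j-1} + \sum_{k<j} T_k v^{k-1}$ before swapping the order of summation, I would rewrite
\begin{equation*}
\norm{v}_{\mathcal A_0}^2 = \sum_{i=1}^N (r_i, w_i)_i, \qquad w_i := v_i + \mathcal R_i^t I_i^t \mathcal A_0 \sum_{j>i} I_j v_j.
\end{equation*}
Termwise Cauchy--Schwarz in $(\cdot,\cdot)_i$ with weight $\overline{\mathcal R_i}$ followed by Cauchy--Schwarz in the index $i$ then gives
\begin{equation*}
\norm{v}_{\mathcal A_0}^4 \leqslant \bigl(\norm{v}_{\mathcal A_0}^2 - \norm{Ev}_{\mathcal A_0}^2\bigr) \sum_{i=1}^N \norm{w_i}_{\overline{\mathcal R_i}^{-1}}^2,
\end{equation*}
so that minimizing over decompositions and taking the supremum over unit vectors yields $\norm{E}_{\mathcal A_0}^2 \leqslant 1 - 1/K$. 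For the matching lower bound one constructs, recursively from $i=N$ down to $i=1$, the decomposition that makes each $w_i$ a common scalar multiple of $\overline{\mathcal R_i} r_i$; this choice saturates both Cauchy--Schwarz steps simultaneously and converts the inequality into the identity \eqref{X-Z-Identity}.

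The main obstacle is the algebraic rearrangement producing $\norm{v}_{\mathcal A_0}^2 = \sum_i (r_i, w_i)_i$: it rests on the adjoint relation $(I_i^t x, y_i)_i = (x, I_i y_i)_0$ together with a double-index bookkeeping of the telescoped differences $v - v^{j-1}$ across all pairs $(i,j)$. Verifying that the cross terms reassemble precisely into the operator $\mathcal R_i^t I_i^t \mathcal A_0 \sum_{j>i} I_j v_j$ inside $w_i$ is delicate but mechanical; beyond the standard Xu--Zikatanov manipulation the only new ingredient is vigilance with the injections $I_i$ and their adjoints $I_i^t$.
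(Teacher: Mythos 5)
Your factorization $I-\mathcal B_0\mathcal A_0=E^*E$ with $E=(I-T_N)\cdots(I-T_1)$, the telescoping identity $\norm{v}_{\mathcal A_0}^2-\norm{Ev}_{\mathcal A_0}^2=\sum_{i}(\overline{\mathcal R_i}r_i,r_i)_i$, the rearrangement $\norm{v}_{\mathcal A_0}^2=\sum_i(r_i,w_i)_i$, and the two Cauchy--Schwarz steps are all correct, and together they prove the inequality $\norm{I-\mathcal B_0\mathcal A_0}_{\mathcal A_0}\leqslant 1-1/K$ --- the half of the statement that the paper actually uses downstream. (For comparison: the paper does not write out a proof at all; it cites Chen (2010) and Li--Xie (2014), where the identity is obtained in one stroke from the auxiliary-space lemma $(\mathcal B_0^{-1}v,v)_0=\inf_{\Pi\tilde v=v}(\tilde{\mathcal R}^{-1}\tilde v,\tilde v)$ on the product space $V_1\times\cdots\times V_N$, so both directions come out simultaneously and no saturation argument is needed.)

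The genuine gap is in your reverse inequality. Saturating both Cauchy--Schwarz steps forces $w_i=c\,\overline{\mathcal R_i}r_i$ with a single common constant $c$; your backward recursion then determines each $v_i$ as $c$ times a fixed linear function of $v$, so $\sum_iI_iv_i=c\,\Psi v$ for some fixed operator $\Psi$. The side constraint $\sum_iI_iv_i=v$ is therefore solvable only when $v$ happens to be an eigenvector of $\Psi$: for generic $v$ \emph{no admissible decomposition saturates the inequalities}, and "the decomposition that makes each $w_i$ a common scalar multiple of $\overline{\mathcal R_i}r_i$" does not exist. Moreover, even if it existed for the maximizing $v$, that would not close the argument: to get $K\leqslant(1-\norm{E}_{\mathcal A_0}^2)^{-1}$ you must exhibit, for \emph{every} unit vector $v$, some admissible decomposition with $\sum_i\norm{w_i}^2_{\overline{\mathcal R_i}^{-1}}\leqslant(1-\norm{E}_{\mathcal A_0}^2)^{-1}$, not merely an optimal one for the extremal $v$. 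The standard repair is to set $w=(I-E)^{-1}v$ (well defined once $\norm{E}_{\mathcal A_0}<1$), define $I_iv_i:=T_iw^{i-1}$ with $w^{i}=(I-T_i)w^{i-1}$, so that $\sum_iI_iv_i=(I-E)w=v$ automatically; one then computes $w_i=\overline{\mathcal R_i}\rho_i-\mathcal R_i^tI_i^t\mathcal A_0Ew$ with $\rho_i=I_i^t\mathcal A_0w^{i-1}$ and bounds $\sum_i\norm{w_i}^2_{\overline{\mathcal R_i}^{-1}}$ using $\sum_i(\overline{\mathcal R_i}\rho_i,\rho_i)_i=\norm{w}^2_{\mathcal A_0}-\norm{Ew}^2_{\mathcal A_0}$. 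As written, your proof establishes only one of the two inequalities constituting the identity.
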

\begin{proof} The desired result can be obtained by following a similar routine  to   the proof  of (Theorem 4.1, \cite{Li;mg_wg;2014}), which is a trivial modification of the new proof \cite{Chen;2010} of the X-Z identity.
\end{proof}
\subsection{HDG framework}\label{ssec:hdg}

We  give a brief description  of the HDG framework;  One may refer to \cite{Cockburn;unified HDG;2009} for more details. 
For any $T\in\mathcal T_h$, let $V(T)\subset L^2(T)$ and
$\bm W(T)\subset [L^2(T)]^d$ be   finite dimensional spaces, $\alpha_T$ be a nonnegative
penalty function defined on $\partial T$, and $P_T^{\partial}:L^2(\partial T)\to M(\partial T)$ be
the standard $L^2$-orthogonal projection operator with
$$
M(\partial T)=\{\mu\in L^2(\partial T):\mu|_F\in P_k(F), \text{ for each face $F$ of $T$}\}.
$$
Introduce the
finite dimensional spaces
\begin{eqnarray}
					V_h &:=& \{v\in L^2(\Omega): v_h|_T \in V(T),~\forall T\in\mathcal T_h\},\\
			\bm W_h &:=& \{\bm\tau\in [L^2(\Omega)]^d: \bm\tau_h|_T \in \bm W(T),~\forall T\in\mathcal T_h\},\\
M_h&:=&\{\mu_h\in L^2( \mathcal F_h):\mu_h|_F\in P_k(F),~\forall F\in\mathcal F_h, \text{ and }
  \mu_h|_{\partial\Omega}=0\}.
\end{eqnarray}

\par
The general framework of HDG methods for the problem 
		\eqref{eq:model} reads as follows (\cite{Cockburn;unified HDG;2009}): Seek 
		$(u_h,\lambda_h,\bm\sigma_h) \in V_h \times M_h \times \bm W_h$ such that
		\begin{subequations}\label{discretization_hdg}
			\begin{eqnarray}
				(\bm C\bmsigma_h,\bm\tau_h)+(u_h,div_h\bm\tau_h)-\bndint{\lambda_h,\bm\tau_h\cdot\bm n} &=& 0\quad  \forall \bm\tau_h\in \bm W_h, \label{discrete1} \\
				-(v_h,div_h\bm\sigma_h) + \bndint{\alpha_T(P_T^{\partial}u_h - \lambda_h), v_h} &=& (f, v_h)\quad \forall v_h\in V_h, \label{discrete2}\\
				\bndint{\bm\sigma_h\cdot\bm n - \alpha_T(P_T^{\partial}u_h-\lambda_h),\mu_h} &=& 0\quad \forall \mu_h\in M_h.\label{discrete3}
			\end{eqnarray}
		\end{subequations}
		where 
$\bm c = \bm a^{-1}$, and $div_h$ is the broken 
		$div$ operator defined by
		$
			div_h\bm\tau_h|_T: = div(\bm\tau_h|_T)$ for any $  \bm\tau_h\in  \bm W_h, T\in\mathcal T_h.
		$
		
		Introduce the following local problem: for any $\lam\in L^2(\partial T)$, seek $(u_{\lam},\bmsigma_{\lam})\in V(T)
\times\bm W(T)$ such that
\begin{subequations}\label{eq:local hdg}
  \begin{eqnarray}
    (\bm c\bmsigma_{\lam},\bm\tau)_T+(u_{\lam},div\bm\tau)_T
    &= \bint{\lam,\bm\tau\cdot\bm n}{\partial T} & \forall \bm\tau \in \bm W(T), \\
    -(v,div\bmsigma_{\lam})_T +
    \bint{\alpha_TP_T^{\partial}u_{\lam},v}{\partial T}
    &=\bint{\alpha_T\lam,v}{\partial T} & \forall v \in V(T).
  \end{eqnarray}
\end{subequations}
Let $a_h(\cdot,\cdot): M_h\times M_h\rightarrow \mathbb R$ be  a  bilinear form    associated with the above local problem, defined by
\begin{equation}
  a_h(\lam_h,\mu_h) :=
  \sum_{T\in\mathcal T_h}(\bm c\bmsigma_{\lam_h},\bmsigma_{\mu_h})_T+
  \bndint{\alpha_T(P_T^{\partial}u_{\lam_h}-\lam_h),
    P_T^{\partial}u_{\mu_h}-\mu_h}.\label{eq:a_h hdg}
\end{equation}
Then  the HDG model \eqref{discretization_hdg} is equivalent to the following reduced system \cite{Cockburn;unified HDG;2009}: seek
$\lam_h\in M_h$ such that
\begin{equation}\label{eq:spd_sys}
  a_h(\lam_h,\mu_h)=(f,u_{\mu_h})_{\Omega},~\forall\mu_h\in M_h.
\end{equation}
  We note that once the Lagrangian multiplier approximation $\lam_h$ is resolved, the numerical flux $\bm\sigma_h$ and the potential approximation $u_h$ can be obtained
  in an element-by-element fashion by \eqref{eq:local hdg}. 

\section{Two-level algorithm}\label{sec:mg}

We recall  that the triangulation $\mathcal T_h$ is assumed to be conforming and shape regular. In addition, we assume the   regularity estimate \eqref{eq:reg} holds  with  $\alpha\in [0,1]$.

For the sake of convenience, in the rest of this paper we shall use the notation $(\cdot,\cdot)$ to abbreviate  the
$L^2$-inner product $(\cdot,\cdot)_\Omega$.

\subsection{Algorithm description}\label{Algorithm}

At first, we introduce the $H^1$-conforming piecewise linear finite element space
\begin{equation}
  V_h:=\{v_h\in H_0^1(\Omega):v_h|_T\in P_1(T)\}.
\end{equation}
We then define the prolongation operator $I_h:V_h\to M_h$ and its adjoint operator $I_h^t:M_h\to V_h$ respectively by
\begin{eqnarray}
\label{De-I_h}
    \inpro{I_hv_h,\mu_h} &=& \inpro{v_h,\mu_h}\quad \forall v_h\in V_h,\\
    \label{De-I_hT}
    (I_h^t\mu_h,v_h)     &=& \inpro{\mu_h,I_hv_h}\quad \forall \mu_h\in M_h,
\end{eqnarray}
and define the operators $\mathcal A_h:M_h\to M_h$ and $\we{\mathcal A_h}:V_h\to V_h$ respectively by
\begin{eqnarray}
\label{De-A_h}
    \inpro{\mathcal A_h\lam_h,\mu_h} &=& a_h(\lam_h,\mu_h)\quad \forall \lam_h,\mu_h\in M_h,\\
    \label{De-A_h-We}
    (\we{\mathcal A_h}u_h,v_h)       &=& (\bm a\bmnabla u_h,\bmnabla v_h) \quad \forall u_h,v_h\in V_h.
\end{eqnarray}
Let $\mathcal R_h:M_h\to M_h$ and $\we{\mathcal R_h}:V_h\to V_h$ be good approximations of
$\mathcal A_h^{-1}$ and $\we{\mathcal A_h}^{-1}$ respectively, with $\mathcal R_h^t$ and
$\we{\mathcal R_h}^t$ satisfying
\begin{displaymath}
  \begin{array}{lcll}
    \inpro{\mathcal R_h^t\lam_h,\mu_h} &=& \inpro{\lam_h,\mathcal R_h\mu_h}\quad \forall \lam_h,\mu_h\in M_h,\\
  (\we{\mathcal R_h}^tu_h,v_h)&=&(u_h,\we{\mathcal R_h}v_h)\quad \forall u_h,v_h\in V_h.
  \end{array}
\end{displaymath}
Finally  we define the operator $\mathcal B_h:M_h\to M_h$ as follows:\\

\framebox{
  \begin{minipage}{85mm}
    For any $\eta_h\in M_h$, $\mathcal B_h\eta_h:=\mu_h^4$ with $\mu_h^4$ defined below:\\
    $~~~~$1. $\mu_h^1:=\mathcal R_h\eta_h$;\\
    $~~~~$2. $\mu_h^2:=\mu_h^1+I_h\we{\mathcal R_h}I_h^t(\eta_h-\mathcal A_h\mu_h^1)$;\\
    $~~~~$3. $\mu_h^3:=\mu_h^2+I_h\we{\mathcal R_h}^tI_h^t(\eta_h-\mathcal A_h\mu_h^2)$;\\
    $~~~~$4. $\mu_h^4:=\mu_h^3+\mathcal R_h^t(\eta_h-\mathcal A_h\mu_h^3)$.
  \end{minipage}}

In view of the operators $\mathcal A_h$ and $\mathcal B_h$,  we   present the following two-level algorithm for the system \eqref{eq:spd_sys}:

\framebox{
  \begin{minipage}{120mm}
    \begin{algorithm}\label{algo:two-level}
      Let $b_h\in M_h$ be given. We solve the equation $\mathcal A_h\lam_h=b_h$ below:\\
      $~~~~\lam_h^0=0$,\\
      $~~~~${\bf for} $j=1,2,\cdots$\\
      $~~~~~~~~\lam_h^j:=\lam_h^{j-1}+\mathcal B_h(b_h-\mathcal A_h\lam_h^{j-1})$;\\
      $~~~~${\bf end}
    \end{algorithm}
  \end{minipage}}

\subsection{Main results}\label{main-results}

We first  introduce the following symmetrizations of $\mathcal R_h$ and $\we{\mathcal R_h}$:
\begin{eqnarray}
  \overline{\mathcal R_h}
  &:=& \mathcal R_h^t+\mathcal R_h-\mathcal R_h^t\mathcal A_h\mathcal R_h,
       \label{def_sysm_R}\\
  \overline{\we{\mathcal R_h}}
  &:=& \we{\mathcal R_h}^t+\we{\mathcal R_h}-\we{\mathcal R_h}^t\we{\mathcal A_h}\we{\mathcal R_h}
       \label{def_sysm_we_R},\\
  \overline{\we{\we{\mathcal R_h}}}
  &:=& \we{\mathcal R_h}^t+\we{\mathcal R_h}-\we{\mathcal R_h}^t\we{\we{\mathcal A_h}}\we{\mathcal R_h},
       \label{def_sysm_we_we_R}
\end{eqnarray}
where
\begin{equation}\label{we-we-A-h}
  \we{\we{\mathcal A_h}}:= I_h^t\mathcal A_hI_h.
\end{equation}
Then we present  some assumptions below.
\begin{assumption}\label{assum:a_h}
  For any $\lam_h\in M_h$, it holds
  \begin{equation}
    \norm{\bm c^{\frac{1}{2}}\bmsigma_{\lam_h}}^2_T +
    \norm{\alpha_T^{\frac{1}{2}}(P_T^{\partial}u_{\lam_h}-\lam_h)}^2_{\partial T}
    \sim\vertiii{\lam_h}^2_{h,\partial T},~\forall T\in\mathcal T_h.
  \end{equation}
\end{assumption}
\begin{assumption}\label{assum:M_h N_h}
 It holds
  \begin{equation}\label{constr-tilde-Rh}
    \sqrt{\frac{1+\mathcal N_h}{1-\mathcal N_h}}
    \bigg[(1+\mathcal N_h)\mathcal M_h+\mathcal N_h\bigg] < 1,
  \end{equation}
  where
  \begin{eqnarray}\label{MN_h}
    \mathcal M_h:=\norm{I-\we{\mathcal R_h}\we{\mathcal A_h}}_{\we{\mathcal A_h}},\quad 
    \mathcal N_h:=\norm{I-\we{\mathcal A_h}^{-1}\we{\we{\mathcal A_h}}}_{\we{\mathcal A_h}}.
  \end{eqnarray}
\end{assumption}
\begin{assumption}\label{assum:R_h 1}
  Let $\mathcal R_h:M_h\to M_h$ be SPD with respect to
  $\inpro{\cdot,\cdot}$ such that
  \begin{eqnarray}
    &0<\sigma(\mathcal R_h\mathcal A_h)<\omega,&\label{eq:assum R_hA_h}\\
    &\norm{\lam_h}_{\overline{\mathcal R_h}^{-1}}\lesssim
      \bigg(\sum_{T\in\mathcal T_h}h_T^{-2}\norm{\lam_h}^2_{h,\partial T}\bigg)^{\frac{1}{2}},
      ~\forall\lam_h\in M_h,&\label{eq:assum inv R_h}
  \end{eqnarray}
  where $\sigma(\mathcal R_h\mathcal A_h)$ denotes the set of all eigenvalues of
  $\mathcal R_h\mathcal A_h$, and $ \omega$ is a  constant  with $0<\omega<2$.
\end{assumption}
\begin{rem}
  Obviously, {\bf Assumption \ref{assum:a_h}} implies
  $$
  a_h(\lam_h,\lam_h)\sim\vertiii{\lam_h}^2_h,~\forall\lam_h\in M_h,
  $$
  and hence $\mathcal A_h$ is SPD with respect to $\inpro{\cdot,\cdot}$.
  Since $I_h:V_h\to M_h$ is injective, it is easy to verify that
  $\we{\we{\mathcal A_h}}$ is SPD with respect to $(\cdot,\cdot)$.
  What's more, by a simple estimate $\vertiii{\lam_h}_{h,\partial T}
  \lesssim h_T^{-1}\norm{\lam_h}_{h,\partial T}$, {\bf Assumption \ref{assum:a_h}} implies that the largest
  eigenvalue of $\mathcal A_h$ satisfies
  $\lam_{max}(\mathcal A_h)\lesssim h^{-2}$ under the condition that
  $\mathcal T_h$ is quasi-uniform.
\end{rem}

\begin{rem} \label{rem32} In  {\bf Assumption \ref{assum:M_h N_h}}, when $\mathcal N_h$ is given,  the condition \eqref{constr-tilde-Rh}  requires that $\mathcal M_h$ is sufficiently small, i.e. the operator $\we{\mathcal R_h}$ is a good-enough approximation of $\we{\mathcal A_h}^{-1}$. Fortunately, for the $H^1$-conforming linear element approximation $\we{\mathcal A_h}$, the research of the choice of $\we{\mathcal R_h}$ is mature.  
 As will be shown in Section \ref{sec:appl} for some applications, it holds $\mathcal N_h=0$
  or $\mathcal N_h\lesssim h$. In the former case,  \eqref{constr-tilde-Rh}  is reduced to  the constraint 
  \begin{equation}\label{M-h-cons} 
  \mathcal M_h<1.
  \end{equation}
  In the latter case, $h$ should be also small
  enough to ensure \eqref{constr-tilde-Rh}. We note that  {\bf Assumption \ref{assum:M_h N_h}} requires implicitly the constraint
   \eqref{M-h-cons}.    
\end{rem}

\begin{rem}\label{Rem3.3}
  It is evident that the condition \eqref{eq:assum R_hA_h} in {\bf Assumption \ref{assum:R_h 1}} implies that
  $\norm{I-\mathcal R_h\mathcal A_h}_{\mathcal A_h}<1$, which means
  $$
  \norm{I-\overline{\mathcal R_h}\mathcal A_h}_{\mathcal A_h}
  =\norm{I-\mathcal R_h\mathcal A_h}^2_{\mathcal A_h} < 1.
  $$
  Suppose {\bf Assumption \ref{assum:a_h}} is true.
  If we choose the Richardson iteration as $\mathcal R_h$, i.e.
  $\mathcal R_h=\frac{1}{\lam_{max}(\mathcal A_h)}I$,
  then \eqref{eq:assum R_hA_h} holds with $\omega=1$, while \eqref{eq:assum inv R_h}
  holds only in the case that $\mathcal T_h$ is quasi-uniform. However, if we set
  $\mathcal R_h$ to be the symmetric Gauss-Seidel iteration, then \eqref{eq:assum R_hA_h}
  holds with $\omega=1$, and \eqref{eq:assum inv R_h} holds as long as $\mathcal T_h$ is
  conforming and shape regular.  We refer to {\bf Appendix A} for a concise  analysis of the symmetric Gauss-Seidel iteration.
\end{rem}

We state  the main results in two theorems  below.
\begin{thm}\label{thm:convergence}
  Under {\bf Assumptions \ref{assum:a_h}-\ref{assum:R_h 1}},
  it holds
  \begin{equation}\label{eq:convergence}
    \norm{(I-\mathcal B_h\mathcal A_h)}_{\mathcal A_h}=1-\frac{1}{K},
  \end{equation}
  where
  \begin{equation}\label{eq:ubnd_K}
    K \lesssim 1 + \mathcal N_h +
    \frac
    {
      (1+\mathcal N_h)/(2-\omega)
    }
    {
      1-\frac{1+\mathcal N_h}{1-\mathcal N_h}
      \bigg[(1+\mathcal N_h)\mathcal M_h+\mathcal N_h\bigg]^2
    }.
  \end{equation}
\end{thm}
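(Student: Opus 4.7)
The plan is to verify that Algorithm \ref{algo:two-level} fits the abstract multiplicative subspace correction scheme underlying Theorem \ref{thm:X-Z identity} with $N=2$: take $V_0 = M_h$, $V_1 = M_h$ (identity injection and smoother $\mathcal R_h$), and $V_2 = V_h$ (injection $I_h$ and smoother $\we{\mathcal R_h}$). A line-by-line comparison of the four steps defining $\mathcal B_h$ with the abstract forward/backward sweep shows they coincide. The subtlety is that the X--Z machinery forces the coarse-space operator to be $\we{\we{\mathcal A_h}} = I_h^t\mathcal A_h I_h$, not the $H^1$-conforming stiffness $\we{\mathcal A_h}$; Assumption \ref{assum:M_h N_h} is tailored precisely to bridge this mismatch.

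Before invoking Theorem \ref{thm:X-Z identity} one must verify its hypothesis; the case $i=1$ follows from Remark \ref{Rem3.3} applied to \eqref{eq:assum R_hA_h}. For $i=2$, the key algebraic step is
\[
\norm{I - \we{\mathcal R_h}\we{\we{\mathcal A_h}}}_{\we{\mathcal A_h}} \leqslant \mathcal M_h + (1+\mathcal M_h)\mathcal N_h = (1+\mathcal N_h)\mathcal M_h + \mathcal N_h,
\]
obtained by inserting $\pm\we{\mathcal R_h}\we{\mathcal A_h}$, using the triangle inequality, and bounding $\norm{\we{\mathcal R_h}\we{\mathcal A_h}}_{\we{\mathcal A_h}} \leqslant 1+\mathcal M_h$. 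The spectral equivalence $\norm{\cdot}_{\we{\we{\mathcal A_h}}}^2 \in [1-\mathcal N_h,\,1+\mathcal N_h]\norm{\cdot}_{\we{\mathcal A_h}}^2$ provided by $\mathcal N_h<1$ converts this into the $\we{\we{\mathcal A_h}}$-norm with an extra factor $\sqrt{(1+\mathcal N_h)/(1-\mathcal N_h)}$, matching the left side of \eqref{constr-tilde-Rh} exactly. The factorization $I - \overline{R}A = (I-R^tA)(I-RA)$ (self-adjoint in $(\cdot,\cdot)_A$) gives $\norm{I-\overline{R}A}_A = \norm{I-RA}_A^2$, so squaring yields the spectral equivalence
\[
\norm{v_2}_{\overline{\we{\we{\mathcal R_h}}}^{-1}}^2 \leqslant \frac{1+\mathcal N_h}{1-\frac{1+\mathcal N_h}{1-\mathcal N_h}[(1+\mathcal N_h)\mathcal M_h+\mathcal N_h]^2}\,\norm{v_2}_{\we{\mathcal A_h}}^2,
\]
which will drive the denominator of \eqref{eq:ubnd_K}.

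With Theorem \ref{thm:X-Z identity} applicable, \eqref{eq:convergence} follows and one is left to bound
\[
K = \sup_{\norm{v}_{\mathcal A_h}=1}\inf_{v_1+I_hv_2=v}\left(\norm{v_1+\mathcal R_h^t\mathcal A_hI_hv_2}^2_{\overline{\mathcal R_h}^{-1}}+\norm{v_2}^2_{\overline{\we{\we{\mathcal R_h}}}^{-1}}\right).
\]
I would pick $v_2\in V_h$ as a Scott--Zhang-type quasi-interpolant of $v$ tailored to the definition \eqref{De-I_h} of $I_h$, and set $v_1 := v - I_hv_2$. The first summand splits by the triangle inequality into $\norm{v_1}_{\overline{\mathcal R_h}^{-1}}^2$, handled directly by \eqref{eq:assum inv R_h}, and $\norm{\mathcal R_h^t\mathcal A_hI_hv_2}_{\overline{\mathcal R_h}^{-1}}^2$; for the latter, the identity $\overline{\mathcal R_h}\mathcal A_h = \mathcal R_h\mathcal A_h(2I-\mathcal R_h\mathcal A_h)$ combined with \eqref{eq:assum R_hA_h} gives $\overline{\mathcal R_h}\geqslant(2-\omega)\mathcal R_h$, producing the $(2-\omega)^{-1}$ factor of \eqref{eq:ubnd_K} and reducing everything to $\norm{I_hv_2}_{\mathcal A_h}^2 \leqslant (1+\mathcal N_h)\norm{v_2}_{\we{\mathcal A_h}}^2$. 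The second summand is controlled by the spectral equivalence just displayed. Combining with standard approximation/stability of the quasi-interpolant together with \eqref{eq:reg} gives $\sum_{T\in\mathcal T_h} h_T^{-2}\norm{v_1}_{h,\partial T}^2 + \norm{v_2}_{\we{\mathcal A_h}}^2 \lesssim \norm{v}_{\mathcal A_h}^2$, and \eqref{eq:ubnd_K} follows.

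The main obstacle will be constructing the decomposition so that the two bounds $\sum_{T\in\mathcal T_h} h_T^{-2}\norm{v_1}_{h,\partial T}^2\lesssim\norm{v}_{\mathcal A_h}^2$ and $\norm{v_2}_{\we{\mathcal A_h}}^2\lesssim\norm{v}_{\mathcal A_h}^2$ hold \emph{simultaneously} on conforming, shape-regular (but not necessarily quasi-uniform) meshes under only the minimal regularity $\alpha\in[0,1]$ in \eqref{eq:reg} --- this is exactly where the paper's novelty over \cite{Gopalakrishnan;2009,Cockburn;2014} lies. The argument will likely pass through a continuous auxiliary problem determined by the datum $v$, with $v_2$ being its discrete $V_h$-Galerkin approximation and the remainder $v_1=v-I_hv_2$ controlled via an Aubin--Nitsche-type duality using \eqref{eq:reg} and the structural identity \eqref{De-I_h}.
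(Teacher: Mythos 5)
Your reduction to the X--Z identity with $N=2$, the bound $\norm{I-\we{\mathcal R_h}\we{\we{\mathcal A_h}}}_{\we{\mathcal A_h}}\leqslant(1+\mathcal N_h)\mathcal M_h+\mathcal N_h$, the resulting estimate for $\norm{v_2}^2_{\overline{\we{\we{\mathcal R_h}}}^{-1}}$, and the treatment of $\norm{\mathcal R_h\mathcal A_hI_hv_2}^2_{\overline{\mathcal R_h}^{-1}}$ via $\overline{\mathcal R_h}=(2\mathcal S_h-\mathcal S_h^2)\mathcal A_h^{-1}$ all coincide with the paper's Lemmas \ref{lem:WeWeA}--\ref{lem:last}. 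The genuine gap is the stable decomposition, which you yourself flag as ``the main obstacle'' and propose to close by attaching a continuous auxiliary problem to the datum, taking $v_2$ as its $V_h$-Galerkin approximation, and controlling the remainder by an Aubin--Nitsche duality based on \eqref{eq:reg}. That route does not work here: the datum $\lam_h\in M_h$ lives only on the mesh skeleton, so there is no canonical continuous problem to attach to it; a duality argument gains a factor of order $h^{\alpha}$ and therefore degenerates as $\alpha\to 0$, whereas the theorem is claimed for all $\alpha\in[0,1]$; and regularity-plus-duality reasoning is precisely what forces the quasi-uniformity and $\alpha>0$ restrictions of \cite{Gopalakrishnan;2009,Cockburn;2014} that this theorem is designed to avoid. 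In fact the paper's proof of Theorem \ref{thm:convergence} never invokes \eqref{eq:reg} at all.

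What is actually needed is a purely local averaging operator $P_h:M_h\to V_h$: at each interior vertex $\bm x$ set $P_h\lam_h(\bm x)$ equal to the average of the boundary means $m_T(\lam_h)$ over the elements $T$ containing $\bm x$, and set it to zero on $\partial\Omega$; then choose $v_2=P_h\lam_h$ and $\mu_h=\lam_h-I_hP_h\lam_h$. Element-by-element scaling, telescoping the differences $m_{T_1}(\lam_h)-m_{T_2}(\lam_h)$ across faces within each vertex patch, yields $|P_h\lam_h|_{1,\Omega}\lesssim\vertiii{\lam_h}_h$ and $\sum_{T}h_T^{-2}\norm{(I-I_hP_h)\lam_h}^2_{h,\partial T}\lesssim\vertiii{\lam_h}^2_h$ (Lemma \ref{lem:P_h}), and {\bf Assumption \ref{assum:a_h}} converts $\vertiii{\lam_h}^2_h$ into $\norm{\lam_h}^2_{\mathcal A_h}$. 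These two estimates hold simultaneously on any conforming shape-regular mesh with no regularity input, which is the one ingredient your outline is missing; the rest of your argument is correct.
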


\begin{thm}\label{thm:convergence2}
  Let $\mathcal R_h$ be one sweep of Gauss-Seidel iteration. Then,
  Under {\bf Assumptions \ref{assum:a_h}-\ref{assum:M_h N_h}}, the relation
  \eqref{eq:convergence}   holds with $\omega=1$.
\end{thm}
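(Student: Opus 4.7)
The plan is to reduce Theorem \ref{thm:convergence2} to Theorem \ref{thm:convergence} by verifying that one sweep of forward Gauss-Seidel fulfills the ingredients used in \textbf{Assumption \ref{assum:R_h 1}} with the constant $\omega = 1$, even though $\mathcal R_h$ itself is not SPD with respect to $\inpro{\cdot,\cdot}$. In the standard nodal basis of $M_h$, decompose $\mathcal A_h = \mathcal L_h + \mathcal D_h + \mathcal L_h^t$ with $\mathcal D_h$ diagonal and $\mathcal L_h$ strictly lower triangular with respect to $\inpro{\cdot,\cdot}$. Then $\mathcal R_h := (\mathcal D_h + \mathcal L_h)^{-1}$, and the symmetrization $\overline{\mathcal R_h}$ defined in \eqref{def_sysm_R} is the classical symmetric Gauss-Seidel operator, which is SPD with respect to $\inpro{\cdot,\cdot}$. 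Inspecting the proof of Theorem \ref{thm:convergence}, \textbf{Assumption \ref{assum:R_h 1}} is used only through (i) a lower bound on the symmetric part of $\mathcal R_h^{-1}$ in terms of $\mathcal A_h$ and (ii) the inverse-type estimate \eqref{eq:assum inv R_h} applied to $\overline{\mathcal R_h}$; it therefore suffices to re-derive both properties directly for this choice of smoother.

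For (i), the identity $\mathcal R_h^{-1} + \mathcal R_h^{-t} - \mathcal A_h = \mathcal D_h$ yields
\begin{displaymath}
\inpro{\mathcal R_h^{-1}\mu_h,\mu_h} \,=\, \tfrac12\inpro{(\mathcal D_h + \mathcal A_h)\mu_h,\mu_h} \,\geq\, \tfrac12\, a_h(\mu_h,\mu_h) \quad \forall\, \mu_h\in M_h,
\end{displaymath}
so the symmetric part of $\mathcal R_h^{-1}$ dominates $\tfrac12 \mathcal A_h$. This is precisely the bound responsible for the factor $1/(2-\omega)$ in \eqref{eq:ubnd_K}, and here it forces the value $\omega = 1$. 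It also implies $\norm{I - \overline{\mathcal R_h}\mathcal A_h}_{\mathcal A_h} < 1$, so the hypothesis of Theorem \ref{thm:X-Z identity} is met. For (ii), one has $\norm{\mu_h}^2_{\overline{\mathcal R_h}^{-1}} \lesssim \inpro{\mathcal D_h\mu_h,\mu_h}$ by standard symmetric-Gauss-Seidel algebra, and \textbf{Assumption \ref{assum:a_h}} together with a coloring argument using only shape regularity of $\mathcal T_h$ shows $\inpro{\mathcal D_h\mu_h,\mu_h} \lesssim \sum_{T\in\mathcal T_h} h_T^{-2}\norm{\mu_h}^2_{h,\partial T}$; this is the content of Appendix A.

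With these two properties in hand, the splitting $v = I_h\we v + (v - I_h\we v)$, with $\we v\in V_h$ a suitable quasi-interpolant, together with the subsequent invocation of \textbf{Assumption \ref{assum:M_h N_h}} to control the coarse-space error, carry over verbatim from the proof of Theorem \ref{thm:convergence} and produce \eqref{eq:convergence} with the bound \eqref{eq:ubnd_K} specialized to $\omega = 1$. The main obstacle is bookkeeping: one must verify that every invocation of \textbf{Assumption \ref{assum:R_h 1}} in the proof of Theorem \ref{thm:convergence} can be replaced by the Gauss-Seidel identity above and the Appendix A estimate without altering the form of the bound, and in particular that the non-symmetry of the smoother $\mathcal R_h$ does not enlarge the constant beyond the stated value $\omega=1$.
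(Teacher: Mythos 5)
Your plan is to reduce Theorem \ref{thm:convergence2} to Theorem \ref{thm:convergence} by showing that one forward Gauss--Seidel sweep supplies the two ingredients of \textbf{Assumption \ref{assum:R_h 1}} with $\omega=1$. The two facts you do establish are fine: the identity $\mathcal R_h^{-1}+\mathcal R_h^{-t}-\mathcal A_h=\mathcal D_h$ gives $\inpro{\mathcal R_h^{-1}\mu_h,\mu_h}=\tfrac12\inpro{(\mathcal D_h+\mathcal A_h)\mu_h,\mu_h}\geqslant\tfrac12\norm{\mu_h}^2_{\mathcal A_h}$, and the bound $\norm{\mu_h}^2_{\overline{\mathcal R_h}^{-1}}\leqslant\inpro{\mathcal R_h^{-1}\mu_h,\mu_h}\lesssim\sum_Th_T^{-2}\norm{\mu_h}^2_{h,\partial T}$ is exactly \eqref{eq:3 appendix} combined with \eqref{eq:2 appendix} in Appendix A. But the reduction does not close, because the proof of Theorem \ref{thm:convergence} uses \textbf{Assumption \ref{assum:R_h 1}} in a third, essential place that your two facts do not cover: Lemma \ref{lem:R_hA_hI_hv_h}. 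There, the cross term $\mathcal R_h^t\mathcal A_hI_hv_h$ produced by the X-Z identity is estimated by writing $\overline{\mathcal R_h}=(2\mathcal S_h-\mathcal S_h^2)\mathcal A_h^{-1}$ with $\mathcal S_h=\mathcal R_h\mathcal A_h$ and invoking the spectral bound $t(2t-t^2)^{-1}t\leqslant\max\{1,\omega/(2-\omega)\}$ on $\sigma(\mathcal S_h)$. This argument requires $\mathcal S_h$ to be symmetric in the $\mathcal A_h$-inner product, i.e.\ $\mathcal R_h$ to be SPD, which is precisely what fails for one forward sweep. So the proof of Theorem \ref{thm:convergence} does \emph{not} ``carry over verbatim''; the step you defer to ``bookkeeping'' — that the non-symmetry of $\mathcal R_h$ does not spoil the bound on $\norm{\mu_h+\mathcal R_h^t\mathcal A_hI_hv_h}^2_{\overline{\mathcal R_h}^{-1}}$ — is the whole content of the theorem and is left unproved. (It is in fact repairable along your lines: since $\overline{\mathcal R_h}=\mathcal R_h^t\mathcal D_h\mathcal R_h$, one computes $\norm{\mathcal R_h^t\mathcal A_hI_hv_h}^2_{\overline{\mathcal R_h}^{-1}}=\norm{\mathcal A_hI_hv_h}^2_{\mathcal D_h^{-1}}=\sum_i\norm{P_iI_hv_h}^2_{\mathcal A_h}\lesssim(1+\mathcal N_h)\norm{v_h}^2_{\we{\mathcal A_h}}$ by finite overlap and \eqref{eq:s v_h} — but none of this is in your write-up.)

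For comparison, the paper avoids the issue entirely by \emph{not} treating the Gauss--Seidel sweep as a black-box smoother on the single space $M_h$. Instead it refines the space decomposition to $M_h=\mathrm{span}\{\eta_1\}+\cdots+\mathrm{span}\{\eta_N\}+I_hV_h$, observes that one forward sweep is exactly the successive subspace correction with \emph{exact} solvers on the one-dimensional nodal spaces, and applies the extended X-Z identity to this finer decomposition (Lemma \ref{lem:appl X-Z 2}). The resulting quantity $\sum_i\norm{\mu_i+P_i(\sum_{j>i}\mu_j+I_hv_h)}^2_{\mathcal A_h}$ involves only $\mathcal A_h$-norms and is controlled by a finite-overlap/coloring argument (Lemma \ref{lem:esti decom 2}); no statement about $\mathcal R_h$, symmetric or otherwise, is needed, and indeed Theorem \ref{thm:convergence2} is stated without \textbf{Assumption \ref{assum:R_h 1}}. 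If you want to keep your reduction strategy, you must supply the missing estimate for the cross term with the non-symmetric smoother; otherwise the cleaner path is the paper's change of decomposition.
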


We shall prove these two theorems in Section \ref{conver-anal}.
\begin{rem}
  Since we only assume $\mathcal T_h$ to be conforming and shape regular,
  it's important that Theorems \ref{thm:convergence}-\ref{thm:convergence2}
   hold on non-quasi-uniform  grids, as long as there is a proper choice of $\we{\mathcal R_h}$ for the $H^1$-conforming linear element approximation. We refer the reader to \cite{Brandt;1977, McCormick;1984,
    McCormick;1986, Dahmen;1992,Bornemann;1993,Aksoylu;2006,Wu;2006,Chen;2011} for the construction of $\we{\mathcal R_h}$ on
  adaptive grids, and to \cite{Brandt;1982, Brandt;1986, Vanek;1996, Livne;2004} for 
    the construction  on completely unstructured grids.
\end{rem}

\subsection{Convergence analysis}\label{conver-anal}
\subsubsection{Proof of Theorem \ref{thm:convergence}}
\begin{lem}\label{lem:WeWeA}
  Under {\bf Assumptions \ref{assum:a_h}-\ref{assum:M_h N_h}}, it holds
  \begin{equation}\label{eq:WeWeA}
    \norm{I-\overline{\we{\we{\mathcal R_h}}}\we{\we{\mathcal A_h}}}_{\we{\we{\mathcal A_h}}}<1,
  \end{equation}
  \begin{equation}\label{eq:v_h WeWeR_h}
    \norm{v_h}^2_{\overline{\we{\we{\mathcal R_h}}}^{-1}}
    \leqslant\frac {1+\mathcal N_h}
    {1-\frac{1+\mathcal N_h}{1-\mathcal N_h}\bigg[(1+\mathcal N_h)\mathcal M_h+\mathcal N_h\bigg]^2}
    \norm{v_h}^2_{\we{\mathcal A_h}},~\forall v_h\in V_h.
  \end{equation}
\end{lem}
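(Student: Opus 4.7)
The plan is to prove \eqref{eq:WeWeA} and \eqref{eq:v_h WeWeR_h} in sequence. The central idea is that, because the defect $\mathcal N_h$ is controlled, the operators $\we{\mathcal A_h}$ and $\we{\we{\mathcal A_h}}$ are spectrally equivalent; this allows me to import estimates on $\we{\mathcal R_h}$ performed in the $\we{\mathcal A_h}$-norm (where $\we{\mathcal R_h}$ is by design a good smoother, quantified by $\mathcal M_h$) into the $\we{\we{\mathcal A_h}}$-norm (which is the natural setting for the symmetrized operator $\overline{\we{\we{\mathcal R_h}}}$).

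First I would record the spectral equivalence. The operator $\we{\mathcal A_h}^{-1}\we{\we{\mathcal A_h}}$ is self-adjoint with respect to the $\we{\mathcal A_h}$-inner product, so its eigenvalues are real, and since Assumption II implicitly requires $\mathcal N_h<1$, the definition of $\mathcal N_h$ places its spectrum inside $[1-\mathcal N_h,\,1+\mathcal N_h]$. This yields
\begin{equation*}
  (1-\mathcal N_h)\,\norm{v_h}^2_{\we{\mathcal A_h}}\leqslant\norm{v_h}^2_{\we{\we{\mathcal A_h}}}\leqslant(1+\mathcal N_h)\,\norm{v_h}^2_{\we{\mathcal A_h}},\qquad\forall v_h\in V_h.
\end{equation*}
Next, from \eqref{def_sysm_we_we_R} and the fact that $\we{\mathcal R_h}^t$ is the $(\cdot,\cdot)$-adjoint of $\we{\mathcal R_h}$ while $\we{\we{\mathcal A_h}}$ is SPD, one has the factorization
\begin{equation*}
  I-\overline{\we{\we{\mathcal R_h}}}\we{\we{\mathcal A_h}}=(I-\we{\mathcal R_h}^t\we{\we{\mathcal A_h}})(I-\we{\mathcal R_h}\we{\we{\mathcal A_h}}),
\end{equation*}
and the first factor is the $\we{\we{\mathcal A_h}}$-adjoint of the second, which gives the ``squaring identity''
\begin{equation*}
  \norm{I-\overline{\we{\we{\mathcal R_h}}}\we{\we{\mathcal A_h}}}_{\we{\we{\mathcal A_h}}}=\norm{I-\we{\mathcal R_h}\we{\we{\mathcal A_h}}}^2_{\we{\we{\mathcal A_h}}}.
\end{equation*}

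The main obstacle is then to estimate $\norm{I-\we{\mathcal R_h}\we{\we{\mathcal A_h}}}_{\we{\we{\mathcal A_h}}}$. I would decompose
\begin{equation*}
  I-\we{\mathcal R_h}\we{\we{\mathcal A_h}}=(I-\we{\mathcal R_h}\we{\mathcal A_h})+\we{\mathcal R_h}\we{\mathcal A_h}\bigl(I-\we{\mathcal A_h}^{-1}\we{\we{\mathcal A_h}}\bigr),
\end{equation*}
and estimate each term in the $\we{\mathcal A_h}$-norm using $\mathcal M_h$, $\mathcal N_h$, and the triangle inequality $\norm{\we{\mathcal R_h}\we{\mathcal A_h}}_{\we{\mathcal A_h}}\leqslant 1+\mathcal M_h$, which yields
\begin{equation*}
  \norm{I-\we{\mathcal R_h}\we{\we{\mathcal A_h}}}_{\we{\mathcal A_h}}\leqslant\mathcal M_h+(1+\mathcal M_h)\mathcal N_h=(1+\mathcal N_h)\mathcal M_h+\mathcal N_h.
\end{equation*}
Converting from the $\we{\mathcal A_h}$-norm to the $\we{\we{\mathcal A_h}}$-norm via the spectral equivalence contributes the factor $\sqrt{(1+\mathcal N_h)/(1-\mathcal N_h)}$, and Assumption II is exactly the statement that the resulting constant is strictly less than $1$; this delivers \eqref{eq:WeWeA} together with a quantitative bound
\begin{equation*}
  \delta:=\norm{I-\overline{\we{\we{\mathcal R_h}}}\we{\we{\mathcal A_h}}}_{\we{\we{\mathcal A_h}}}\leqslant\frac{1+\mathcal N_h}{1-\mathcal N_h}\bigl[(1+\mathcal N_h)\mathcal M_h+\mathcal N_h\bigr]^2<1.
\end{equation*}

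Finally, for \eqref{eq:v_h WeWeR_h}, I would set $T:=\overline{\we{\we{\mathcal R_h}}}\we{\we{\mathcal A_h}}$, which is $\we{\we{\mathcal A_h}}$-self-adjoint with spectrum in $[1-\delta,1+\delta]$, and rewrite
\begin{equation*}
  \norm{v_h}^2_{\overline{\we{\we{\mathcal R_h}}}^{-1}}=(T^{-1}v_h,v_h)_{\we{\we{\mathcal A_h}}}\leqslant\frac{1}{1-\delta}\norm{v_h}^2_{\we{\we{\mathcal A_h}}}\leqslant\frac{1+\mathcal N_h}{1-\delta}\norm{v_h}^2_{\we{\mathcal A_h}},
\end{equation*}
which is exactly the target bound. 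The whole argument is essentially elementary; the only real bookkeeping is to keep straight which operator is symmetric with respect to which inner product, which is what makes the factorization and the spectral bounds legitimate.
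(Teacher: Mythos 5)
Your proof is correct and follows essentially the same route as the paper: the spectral equivalence of $\we{\mathcal A_h}$ and $\we{\we{\mathcal A_h}}$ via $\mathcal N_h$, the splitting $I-\we{\mathcal R_h}\we{\we{\mathcal A_h}}=(I-\we{\mathcal R_h}\we{\mathcal A_h})+\we{\mathcal R_h}\we{\mathcal A_h}(I-\we{\mathcal A_h}^{-1}\we{\we{\mathcal A_h}})$, the squaring identity for the symmetrized smoother, and the eigenvalue bound $\norm{v_h}^2_{\overline{\we{\we{\mathcal R_h}}}^{-1}}\leqslant(1-\delta)^{-1}\norm{v_h}^2_{\we{\we{\mathcal A_h}}}$. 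The only cosmetic difference is that you make the factorization $(I-\we{\mathcal R_h}^t\we{\we{\mathcal A_h}})(I-\we{\mathcal R_h}\we{\we{\mathcal A_h}})$ explicit where the paper just invokes the squaring identity.
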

\begin{proof}
  Since $\we{\mathcal A_h}^{-1}\we{\we{\mathcal A_h}}$ is symmetric with respect to
  $(\cdot,\cdot)_{\we{\mathcal A_h}}$, we have
  \begin{equation}\label{eq:s v_h}
    (1-\mathcal N_h)\norm{v_h}^2_{\we{\mathcal A_h}}
    \leqslant \norm{v_h}^2_{\we{\we{\mathcal A_h}}}\leqslant
    (1+\mathcal N_h)\norm{v_h}^2_{\we{\mathcal A_h}},~\forall v_h\in V_h.
  \end{equation}
  For any linear operator $\mathcal S_h:V_h\to V_h$, it holds
  \begin{equation}\label{eq:s S_h}
    \begin{split}
      \norm{\mathcal S_h}_{\we{\we{\mathcal A_h}}}
      &=\sup_{v_h\in V_h}\frac{\norm{\mathcal S_hv_h}_{\we{\we{\mathcal A_h}}}}
      {\norm{v_h}_{\we{\we{\mathcal A_h}}}}\\
      &\leqslant\sqrt{\frac{1+\mathcal N_h}{1-\mathcal N_h}}\sup_{v_h\in V_h}
      \frac{\norm{\mathcal S_hv_h}_{\we{\mathcal A_h}}}{\norm{v_h}_{\we{\mathcal A_h}}}\\
      &=\sqrt{\frac{1+\mathcal N_h}{1-\mathcal N_h}}\norm{\mathcal S_h}_{\we{\mathcal A_h}}.
    \end{split}
  \end{equation}
 Then,  from
  \begin{displaymath}
    \begin{split}
      \norm{I-\we{\mathcal R_h}\we{\we{\mathcal A_h}}}_{\we{\mathcal A_h}}
      &\leqslant\norm{I-\we{\mathcal R_h}\we{\mathcal A_h}}_{\we{\mathcal A_h}}+
      \norm{\we{\mathcal R_h}\we{\mathcal A_h}(I-\we{\mathcal A_h}^{-1}\we{\we{\mathcal A_h}})}_{\we{\mathcal A_h}}\\
      &\leqslant\mathcal M_h+\mathcal N_h\norm{\we{\mathcal R_h}\we{\mathcal A_h}}_{\we{\mathcal A_h}}\\
      &\leqslant(1+\mathcal N_h)\mathcal M_h+\mathcal N_h
    \end{split}
  \end{displaymath}
  it follows
  \begin{equation}\label{eq:lxy}
    \begin{split}
      \norm{I-\overline{\we{\we{\mathcal R_h}}}\we{\we{\mathcal A_h}}}_{\we{\we{\mathcal A_h}}}
      &=\norm{I-\we{\mathcal R_h}\we{\we{\mathcal A_h}}}_{\we{\we{\mathcal A_h}}}^2\\
      &\leqslant\frac{1+\mathcal N_h}{1-\mathcal N_h}
      \norm{I-\we{\mathcal R_h}\we{\we{\mathcal A_h}}}^2_{\we{\mathcal A_h}}\qquad\text{(by \eqref{eq:s S_h})}\\
      &\leqslant\frac{1+\mathcal N_h}{1-\mathcal N_h}\bigg[(1+\mathcal N_h)\mathcal M_h
      +\mathcal N_h\bigg]^2,
    \end{split}
  \end{equation}
  which  immediately implies \eqref{eq:WeWeA}.

  On the other hand, by the definition of $\overline{\we{\we{\mathcal R_h}}}$, we can get
  \begin{displaymath}
    \norm{I-\overline{\we{\we{\mathcal R_h}}}\we{\we{\mathcal A_h}}}_{\we{\we{\mathcal A_h}}}
    = 1-\lam_{min}(\overline{\we{\we{\mathcal R_h}}}\we{\we{\mathcal A_h}}),
  \end{displaymath}
  where $\lam_{min}(\overline{\we{\we{\mathcal R_h}}}\we{\we{\mathcal A_h}})$ denotes the
  smallest eigenvalue of $\overline{\we{\we{\mathcal R_h}}}\we{\we{\mathcal A_h}}$.  The above relation, together with the fact  that, due to
  \eqref{eq:WeWeA},   $\overline{\we{\we{\mathcal R_h}}}$ is SPD with respect to
  $(\cdot,\cdot)$, yields
   \begin{equation}
    \begin{split}
      \norm{v_h}^2_{\overline{\we{\we{\mathcal R_h}}}^{-1}}
      &\leqslant\frac{1}{\lam_{min}(\overline{\we{\we{\mathcal R_h}}}\we{\we{\mathcal A_h}})}
      \norm{v_h}^2_{\we{\we{\mathcal A_h}}}\\
      &=\frac{1}
      {1-\norm{I-\overline{\we{\we{\mathcal R_h}}}\we{\we{\mathcal A_h}}}_{\we{\we{\mathcal A_h}}}}
      \norm{v_h}^2_{\we{\we{\mathcal A_h}}}.
    \end{split}
  \end{equation}
  Finally, the desired inequality \eqref{eq:v_h WeWeR_h} follows immediately from \eqref{eq:s v_h} and \eqref{eq:lxy}.
  This completes the proof.
\end{proof}

\begin{lem}\label{lem:appl X-Z 1}
  Under {\bf Assumptions \ref{assum:a_h}-\ref{assum:R_h 1}}, the relation \eqref{eq:convergence}
  holds with
  \begin{equation}
    K=\sup_{\norm{\lam_h}_{\mathcal A_h}=1}\inf_{\mu_h+I_hv_h=\lam_h}
    \norm{\mu_h+\mathcal R_h\mathcal A_hI_hv_h}^2_{\overline{\mathcal R_h}^{-1}}+
    \norm{v_h}^2_{\overline{\we{\we{\mathcal R_h}}}^{-1}}.
  \end{equation}
\end{lem}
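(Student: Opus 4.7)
The plan is to apply the extended X--Z identity of Theorem \ref{thm:X-Z identity} directly to the two-level scheme by taking $N=2$. Specifically, I would set $V_0 := M_h$ equipped with $\inpro{\cdot,\cdot}$ and $\mathcal A_0 := \mathcal A_h$, and choose the two auxiliary spaces $V_1 := M_h$ with $I_1$ the identity on $M_h$, and $V_2 := V_h$ with $I_2 := I_h$. Under this choice, by definition $\mathcal A_1 = \mathcal A_h$ and $\mathcal A_2 = I_h^t\mathcal A_h I_h = \we{\we{\mathcal A_h}}$, and the required decomposition $V_0 = I_1V_1 + I_2V_2$ is trivial since $I_1V_1 = M_h$. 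The smoothers are $\mathcal R_1 := \mathcal R_h$ and $\mathcal R_2 := \we{\mathcal R_h}$, with symmetrizations $\overline{\mathcal R_1} = \overline{\mathcal R_h}$ and $\overline{\mathcal R_2} = \overline{\we{\we{\mathcal R_h}}}$ (note that the latter is built using $\we{\we{\mathcal A_h}}$, not $\we{\mathcal A_h}$, precisely matching \eqref{def_sysm_we_we_R}).

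Next I would unwind the boxed definition of $\mathcal B_0$ preceding Theorem \ref{thm:X-Z identity} and check step-by-step that it reproduces $\mathcal B_h$. The forward sweep yields $v^1 = \mathcal R_h\eta_h$ (since $I_1$ is the identity) and $v^2 = v^1 + I_h\we{\mathcal R_h}I_h^t(\eta_h-\mathcal A_h v^1)$; the backward sweep then produces $v^3 = v^2 + I_h\we{\mathcal R_h}^tI_h^t(\eta_h-\mathcal A_h v^2)$ and $v^4 = v^3 + \mathcal R_h^t(\eta_h-\mathcal A_h v^3)$. These four lines are literally the four steps of Algorithm \ref{algo:two-level}'s $\mathcal B_h$.

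To invoke Theorem \ref{thm:X-Z identity} I must verify $\norm{I-\overline{\mathcal R_i}\mathcal A_i}_{\mathcal A_i}<1$ for $i=1,2$. For $i=1$, the condition \eqref{eq:assum R_hA_h} combined with the SPD property of $\mathcal R_h$ gives $\norm{I-\mathcal R_h\mathcal A_h}_{\mathcal A_h}<1$, whence $\norm{I-\overline{\mathcal R_h}\mathcal A_h}_{\mathcal A_h} = \norm{I-\mathcal R_h\mathcal A_h}_{\mathcal A_h}^2 < 1$ as recorded in Remark \ref{Rem3.3}. For $i=2$, the required inequality is exactly \eqref{eq:WeWeA} in the already-proved Lemma \ref{lem:WeWeA}. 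Theorem \ref{thm:X-Z identity} then gives \eqref{eq:convergence} with
\[
K = \sup_{\norm{\lam_h}_{\mathcal A_h}=1}\inf_{v_1 + I_hv_2 = \lam_h}\Big(\norm{v_1 + \mathcal R_h^t \mathcal A_h I_h v_2}^2_{\overline{\mathcal R_h}^{-1}} + \norm{v_2}^2_{\overline{\we{\we{\mathcal R_h}}}^{-1}}\Big),
\]
where the $i=2$ term is free of coupling because $\sum_{j>2}$ is empty. Finally, using the SPD hypothesis $\mathcal R_h = \mathcal R_h^t$ and renaming $v_1 \mapsto \mu_h$, $v_2 \mapsto v_h$ yields the stated formula. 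The proof is essentially mechanical; the one conceptual subtlety, namely that the coarse operator inherited from the X--Z setup is $\we{\we{\mathcal A_h}}$ rather than the $H^1$-conforming finite element operator $\we{\mathcal A_h}$, is exactly what Lemma \ref{lem:WeWeA} (and hence Assumption \ref{assum:M_h N_h}) is designed to bridge, so the main obstacle has already been dispatched.
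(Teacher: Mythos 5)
Your proof is correct and takes exactly the same route as the paper: the paper's own proof is a one-line appeal to the decomposition $M_h = M_h + I_hV_h$ and Theorem \ref{thm:X-Z identity}, and your write-up simply fills in the details (the choice $N=2$, the identification $\mathcal A_2=\we{\we{\mathcal A_h}}$, the verification of the hypotheses via Remark \ref{Rem3.3} and Lemma \ref{lem:WeWeA}, and $\mathcal R_h^t=\mathcal R_h$ from the SPD assumption).
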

\begin{proof}
The conclusion follows from   the space decomposition $M_h=M_h+I_hV_h$ and the extended version of X-Z identity \eqref{X-Z-Identity}. 
\end{proof}

\begin{lem}\label{lem:R_hA_hI_hv_h}
  Under {\bf Assumptions \ref{assum:a_h}-\ref{assum:R_h 1}}, it holds
  \begin{equation}\label{eq:R_hA_hI_hv_h}
    \norm{\mathcal R_h\mathcal A_hI_hv_h}^2_{\overline{\mathcal R_h}^{-1}}\leqslant
    \max\bigg\{1,\frac{\omega}{2-\omega}\bigg\}(1+\mathcal N_h)\norm{v_h}^2_{\we{\mathcal A_h}},~\forall v_h\in V_h.
  \end{equation}
\end{lem}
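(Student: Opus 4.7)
The plan is to reduce the inequality \eqref{eq:R_hA_hI_hv_h} to an operator-valued spectral estimate on $V_h$ (which is the abstract content of the bound) and then absorb the passage from $I_h v_h\in M_h$ to $v_h\in V_h$ via the relation $I_h^t\mathcal A_h I_h=\we{\we{\mathcal A_h}}$ and the comparison \eqref{eq:s v_h} between $\we{\we{\mathcal A_h}}$ and $\we{\mathcal A_h}$ already proved in Lemma \ref{lem:WeWeA}. Concretely, setting $x=I_hv_h$, the identity
\begin{equation*}
\norm{I_hv_h}^2_{\mathcal A_h}=\inpro{\mathcal A_hI_hv_h,I_hv_h}=(I_h^t\mathcal A_hI_hv_h,v_h)=\norm{v_h}^2_{\we{\we{\mathcal A_h}}}\leqslant(1+\mathcal N_h)\norm{v_h}^2_{\we{\mathcal A_h}}
\end{equation*}
immediately handles the right-hand side of \eqref{eq:R_hA_hI_hv_h} once one establishes
\begin{equation*}
\norm{\mathcal R_h\mathcal A_h x}^2_{\overline{\mathcal R_h}^{-1}}\leqslant \max\{1,\tfrac{\omega}{2-\omega}\}\,\norm{x}^2_{\mathcal A_h},\qquad\forall x\in M_h.\tag{$\ast$}
\end{equation*}
So the work reduces to proving $(\ast)$, an abstract statement about the SPD iterator $\mathcal R_h$ under the condition $\sigma(\mathcal R_h\mathcal A_h)\subset(0,\omega)$.

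To prove $(\ast)$, I would diagonalize $\mathcal R_h\mathcal A_h$ simultaneously with $\mathcal R_h$. Since $\mathcal R_h$ is SPD in $\inpro{\cdot,\cdot}$, write $\mathcal R_h=\mathcal R_h^{1/2}\mathcal R_h^{1/2}$ and diagonalize $\mathcal R_h^{1/2}\mathcal A_h\mathcal R_h^{1/2}$ in an $\inpro{\cdot,\cdot}$-orthonormal basis $\{e_i\}$ with eigenvalues $\lambda_i\in(0,\omega)$. Setting $f_i:=\mathcal R_h^{1/2}e_i$ yields a basis of $M_h$ in which
\begin{equation*}
\mathcal R_h\mathcal A_h f_i=\lambda_i f_i,\quad \inpro{\mathcal R_h^{-1}f_i,f_j}=\delta_{ij},\quad \inpro{\mathcal A_h f_i,f_j}=\lambda_i\delta_{ij}.
\end{equation*}
Using $\overline{\mathcal R_h}\mathcal A_h=2\mathcal R_h\mathcal A_h-(\mathcal R_h\mathcal A_h)^2$, one finds $f_i$ is an eigenvector of $\overline{\mathcal R_h}\mathcal A_h$ with eigenvalue $\lambda_i(2-\lambda_i)>0$, and hence $\overline{\mathcal R_h}^{-1}f_i=\frac{1}{2-\lambda_i}\mathcal R_h^{-1}f_i$. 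Expanding $x=\sum c_i f_i$, a direct calculation then gives
\begin{equation*}
\norm{\mathcal R_h\mathcal A_h x}^2_{\overline{\mathcal R_h}^{-1}}=\sum_i\frac{\lambda_i^2 c_i^2}{2-\lambda_i},\qquad \norm{x}^2_{\mathcal A_h}=\sum_i\lambda_i c_i^2,
\end{equation*}
so the ratio is a convex combination of the numbers $\lambda_i/(2-\lambda_i)$. Since $\lambda\mapsto\lambda/(2-\lambda)$ is increasing on $(0,2)$ with $\lambda/(2-\lambda)\leqslant 1$ precisely when $\lambda\leqslant 1$, the supremum of this ratio over $\sigma(\mathcal R_h\mathcal A_h)\subset(0,\omega)$ is bounded by $\max\{1,\omega/(2-\omega)\}$, regardless of whether $\omega\leqslant 1$ or $1<\omega<2$. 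This proves $(\ast)$, and combining with the reduction above completes the proof.

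The main obstacle, or at least the only step that is not completely formal, is the spectral setup: one must be careful that $\mathcal R_h\mathcal A_h$, although not symmetric in the original inner product, is self-adjoint with respect to the $\mathcal R_h^{-1}$-inner product and hence genuinely diagonalizable with positive eigenvalues under \eqref{eq:assum R_hA_h}. Once that basis is in hand and the eigenvalue $\lambda_i(2-\lambda_i)$ of $\overline{\mathcal R_h}\mathcal A_h$ is verified, the case analysis $\omega\lessgtr 1$ producing the $\max\{1,\omega/(2-\omega)\}$ constant is routine, and the final step using $\norm{I_hv_h}^2_{\mathcal A_h}=\norm{v_h}^2_{\we{\we{\mathcal A_h}}}$ together with \eqref{eq:s v_h} gives exactly the factor $(1+\mathcal N_h)$ claimed in \eqref{eq:R_hA_hI_hv_h}.
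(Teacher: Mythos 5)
Your proof is correct and takes essentially the same route as the paper's: the paper likewise sets $\mathcal S_h=\mathcal R_h\mathcal A_h$, writes $\overline{\mathcal R_h}=(2\mathcal S_h-\mathcal S_h^2)\mathcal A_h^{-1}$, uses the self-adjointness of $\mathcal S_h$ in the $\mathcal A_h$-inner product together with the bound $t(2t-t^2)^{-1}t=t/(2-t)\leqslant\max\{1,\omega/(2-\omega)\}$ on $(0,\omega]$, and then invokes $\norm{I_hv_h}^2_{\mathcal A_h}=\norm{v_h}^2_{\we{\we{\mathcal A_h}}}\leqslant(1+\mathcal N_h)\norm{v_h}^2_{\we{\mathcal A_h}}$ via \eqref{eq:s v_h}. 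Your explicit diagonalization through $\mathcal R_h^{1/2}\mathcal A_h\mathcal R_h^{1/2}$ is just a concrete rendering of the same spectral-calculus step.
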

\begin{proof}
  Denote $\mathcal S_h:= \mathcal R_h\mathcal A_h$.
  Noting that
  \begin{displaymath}
    \overline{\mathcal R_h} = 2\mathcal R_h-\mathcal R_h\mathcal A_h\mathcal R_h
    =(2\mathcal S_h-\mathcal S_h^2)\mathcal A_h^{-1},
  \end{displaymath}
  we have
  \begin{equation}\label{eq:s R_hA_hI_hv_h}
    \begin{split}
      \norm{\mathcal R_h\mathcal A_hI_hv_h}^2_{\overline{\mathcal R_h}^{-1}}
      &=\inpro{\overline{\mathcal R_h}^{-1}\mathcal R_h\mathcal A_hI_hv_h,
        \mathcal R_h\mathcal A_hI_hv_h}\\
      &=\inpro{\mathcal A_h(2\mathcal S_h-\mathcal S_h^2)^{-1}\mathcal S_hI_hv_h,
        \mathcal R_h\mathcal A_hI_hv_h}\\
      &=\ninpro{\mathcal S_h(2\mathcal S_h-\mathcal S_h^2)^{-1}\mathcal S_hI_hv_h,I_hv_h}.
    \end{split}
  \end{equation}
  Since $\mathcal S_h$ is SPD with respect to $\ninpro{\cdot,\cdot}$ and the inequality
  \begin{displaymath}
    t(2t-t^2)^{-1}t\leqslant\max\bigg\{1,\frac{\omega}{2-\omega}\bigg\},t\in (0,\omega]
  \end{displaymath}
 holds, the relation \eqref{eq:s R_hA_hI_hv_h}, together with \eqref{eq:s v_h}, immediately yields the desired
  estimate \eqref{eq:R_hA_hI_hv_h}.
\end{proof}
From Lemmas \ref{lem:WeWeA}-\ref{lem:R_hA_hI_hv_h} and {\bf Assumption \ref{assum:R_h 1}},
we obtain immediately the lemma below.
\begin{lem}\label{lem:last}
  Under {\bf Assumptions \ref{assum:a_h}-\ref{assum:R_h 1}}, the relation \eqref{eq:convergence} holds with
  \begin{equation}\label{eq:last K}
    K\lesssim
    \sup_{\norm{\lam_h}_{\mathcal A_h}=1}\inf_{\mu_h+I_hv_h=\lam_h}
    \sum_{T\in\mathcal T_h}h_T^{-2}\norm{\mu_h}^2_{h,\partial T} +
    \frac
    {
      (1+\mathcal N_h)/(2-\omega)
    }
    {
      1-
      \frac
      {
        1+\mathcal N_h
      }
      {
        1-\mathcal N_h
      }
      \bigg[
      (1+\mathcal N_h)\mathcal M_h+\mathcal N_h
      \bigg]^2
    }
    \norm{v_h}^2_{\we{\mathcal A_h}}.
  \end{equation}
\end{lem}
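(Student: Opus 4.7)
The plan is to combine the three preceding lemmas with a single splitting step. By Lemma \ref{lem:appl X-Z 1}, $K$ equals
$$\sup_{\norm{\lam_h}_{\mathcal A_h}=1}\inf_{\mu_h+I_hv_h=\lam_h}\Bigl(\norm{\mu_h+\mathcal R_h\mathcal A_hI_hv_h}^2_{\overline{\mathcal R_h}^{-1}}+\norm{v_h}^2_{\overline{\we{\we{\mathcal R_h}}}^{-1}}\Bigr),$$
so it suffices to bound the integrand, for each decomposition $\lam_h=\mu_h+I_hv_h$, by the right-hand side of \eqref{eq:last K}.

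For the first summand I would apply the elementary inequality $\norm{a+b}^2\leqslant 2\norm{a}^2+2\norm{b}^2$ in the $\overline{\mathcal R_h}^{-1}$ inner product to get
$$\norm{\mu_h+\mathcal R_h\mathcal A_hI_hv_h}^2_{\overline{\mathcal R_h}^{-1}}\leqslant 2\norm{\mu_h}^2_{\overline{\mathcal R_h}^{-1}}+2\norm{\mathcal R_h\mathcal A_hI_hv_h}^2_{\overline{\mathcal R_h}^{-1}}.$$
The first piece is handled directly by \eqref{eq:assum inv R_h} in Assumption \ref{assum:R_h 1}, yielding $\sum_{T\in\mathcal T_h}h_T^{-2}\norm{\mu_h}^2_{h,\partial T}$, which matches the first term in \eqref{eq:last K} up to the hidden constant. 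The second piece is exactly what Lemma \ref{lem:R_hA_hI_hv_h} controls; noting that $\max\{1,\omega/(2-\omega)\}\leqslant 2/(2-\omega)$ on $(0,2)$, it contributes a factor of order $(1+\mathcal N_h)/(2-\omega)$ multiplying $\norm{v_h}^2_{\we{\mathcal A_h}}$. For the second summand, $\norm{v_h}^2_{\overline{\we{\we{\mathcal R_h}}}^{-1}}$ is bounded verbatim by \eqref{eq:v_h WeWeR_h} of Lemma \ref{lem:WeWeA}, producing a coefficient of the form $(1+\mathcal N_h)/(1-\theta)$ times $\norm{v_h}^2_{\we{\mathcal A_h}}$, where $\theta:=\frac{1+\mathcal N_h}{1-\mathcal N_h}[(1+\mathcal N_h)\mathcal M_h+\mathcal N_h]^2$ satisfies $\theta<1$ by Assumption \ref{assum:M_h N_h}.

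Finally I would merge the two $\norm{v_h}^2_{\we{\mathcal A_h}}$ contributions into the single coefficient displayed in \eqref{eq:last K}: since $2-\omega\in(0,2)$ and $1-\theta\in(0,1)$, each of $(1+\mathcal N_h)/(2-\omega)$ and $(1+\mathcal N_h)/(1-\theta)$ is bounded by a universal constant multiple of $\frac{(1+\mathcal N_h)/(2-\omega)}{1-\theta}$, which is exactly the coefficient appearing in \eqref{eq:last K}. There is no substantive obstacle here; every quantity in the target estimate has already been pre-isolated by one of Lemmas \ref{lem:WeWeA}--\ref{lem:R_hA_hI_hv_h}, and the only bookkeeping is the triangle-style split and the elementary comparison of the two denominators.
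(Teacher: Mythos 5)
Your proof is correct and follows exactly the route the paper intends: the paper states that Lemma \ref{lem:last} follows ``immediately'' from Lemmas \ref{lem:WeWeA}--\ref{lem:R_hA_hI_hv_h} and Assumption \ref{assum:R_h 1}, and your triangle-inequality split in the $\overline{\mathcal R_h}^{-1}$-norm together with the elementary comparisons $\max\{1,\omega/(2-\omega)\}\leqslant 2/(2-\omega)$ and $1\leqslant 2/(2-\omega)$, $1-\theta<1$ is precisely the bookkeeping being suppressed. No gaps.
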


To further derive \eqref{eq:ubnd_K},
we introduce the operator $P_h:M_h\to V_h$ with \begin{displaymath}
  P_h\lam_h(\bm x) =
  \left\{
    \begin{array}{rcll}
                             \frac{\sum\limits_{T\in\omega_{\bm x}}m_T(\lam_h)}
                             {\sum\limits_{T\in\omega_{\bm x}}1}, &
                                                                    \text{if $\bm x$ is an interior node of $\mathcal T_h$,}\\
      0, &\text{if $\bm x\in\partial\Omega$}
    \end{array}
  \right.
\end{displaymath}
 for any $\lam_h\in M_h$,
 where $\omega_{\bm x}$ denotes the set $\{T\in\mathcal T_h:\bm x \text{ is a vertex of $T$}\}$.
We have the following important estimates for $P_h$.  
\begin{lem}\label{lem:P_h}
  For any $\lam_h\in M_h$, it holds
  \begin{eqnarray}
    |P_h\lam_h|_{1,\Omega}&\lesssim&\vertiii{\lam_h}_h,\label{eq:esti P_h}\\
    \bigg(\sum_Th_T^{-2}\norm{(I-I_hP_h)\lam_h}^2_{h,\partial T}\bigg)^{\frac{1}{2}}&\lesssim&\vertiii{\lam_h}_h.
                                                                                               \label{eq:esti I-I_hP_h}
  \end{eqnarray}
\end{lem}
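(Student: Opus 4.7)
\medskip
\textbf{Proof plan.}
The strategy is to reduce both inequalities to a single \emph{neighbour estimate} controlling how the boundary means $m_T(\lam_h)$ differ across face-adjacent elements, and then to lift this to vertex values via short chains inside the patches $\omega_{\bm x}$, using shape regularity throughout.

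First I would establish the neighbour estimate. Given an interior face $F = \partial T \cap \partial T'$, the identity $m_F(\lam_h) - m_T(\lam_h) = \frac{1}{|F|}\int_F(\lam_h - m_T(\lam_h))$ combined with Cauchy--Schwarz and $|F| \sim h_T^{d-1}$ yields $|m_F(\lam_h) - m_T(\lam_h)|^2 \lesssim h_T^{2-d}|\lam_h|^2_{h,\partial T}$, and analogously on the $T'$ side, so
$$
|m_T(\lam_h) - m_{T'}(\lam_h)|^2 \lesssim h_T^{2-d}\bigl(|\lam_h|^2_{h,\partial T} + |\lam_h|^2_{h,\partial T'}\bigr).
$$
For $F \subset \partial\Omega$ the condition $\lam_h|_F = 0$ forces $m_F(\lam_h)=0$ and gives the corresponding one-sided bound $|m_T(\lam_h)|^2 \lesssim h_T^{2-d}|\lam_h|^2_{h,\partial T}$. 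Telescoping such differences along a bounded-length chain inside $\omega_{\bm x}$ (ending at an element with a face on $\partial\Omega$ when $\bm x \in \partial\Omega$) and using shape regularity to control $|\omega_{\bm x}|$, I then obtain for every vertex $\bm x$ of $T$
$$
|P_h\lam_h(\bm x) - m_T(\lam_h)|^2 \lesssim h_T^{2-d}\sum_{T'' \in \omega_{\bm x}}|\lam_h|^2_{h,\partial T''}.
$$

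To prove \eqref{eq:esti P_h} I would use that $P_h\lam_h|_T$ is affine, so the standard $P_1$ identity $|P_h\lam_h|^2_{1,T} \sim h_T^{d-2}\sum_{\bm x \text{ vertex of }T}|P_h\lam_h(\bm x) - m_T(\lam_h)|^2$ (subtracting the constant $m_T(\lam_h)$ does not affect the gradient) combines with the vertex bound above to give $|P_h\lam_h|^2_{1,T} \lesssim \sum_{T'' \in \Omega_T}|\lam_h|^2_{h,\partial T''}$, where $\Omega_T := \bigcup_{\bm x \in T}\omega_{\bm x}$ is a bounded patch. Summing over $T$ and using finite overlap produces \eqref{eq:esti P_h}. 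For \eqref{eq:esti I-I_hP_h} I would first note that on each face $F$ we have $I_hP_h\lam_h|_F = P_F^\partial(P_h\lam_h|_F)$ while $P_F^\partial\lam_h = \lam_h$ (since $\lam_h \in M_h$), so $(I-I_hP_h)\lam_h|_F = P_F^\partial(\lam_h - P_h\lam_h)$ and hence $\norm{(I-I_hP_h)\lam_h}_{\partial T} \leq \norm{\lam_h - P_h\lam_h}_{\partial T}$. Splitting $\lam_h - P_h\lam_h = (\lam_h - m_T(\lam_h)) + (m_T(\lam_h) - P_h\lam_h)$ on $\partial T$, the first term contributes exactly $|\lam_h|^2_{h,\partial T}$ to $h_T^{-1}\norm{\cdot}^2_{\partial T}$; the second is affine on $T$, so its boundary $L^2$-norm scales as $h_T^{(d-1)/2}$ times the maximum of its vertex values, and the vertex bound then gives $h_T^{-1}\norm{m_T(\lam_h) - P_h\lam_h}^2_{\partial T} \lesssim \sum_{T'' \in \Omega_T}|\lam_h|^2_{h,\partial T''}$. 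Summing and recalling $h_T^{-2}\norm{\cdot}^2_{h,\partial T} = h_T^{-1}\norm{\cdot}^2_{\partial T}$ yields \eqref{eq:esti I-I_hP_h}.

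The hard part is not any individual inequality but the bookkeeping: one must verify that every element $T''$ appears in only boundedly many patches $\Omega_T$, so the final sum over $T$ produces only a constant times $\vertiii{\lam_h}_h^2$. This bounded-overlap property, together with uniform control on $|\omega_{\bm x}|$ and on chain lengths inside the patches, is exactly what the shape-regularity hypothesis furnishes, which is what allows us to avoid assuming quasi-uniformity of $\mathcal T_h$.
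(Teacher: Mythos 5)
Your proposal is correct and follows essentially the same route as the paper: both reduce everything to the face-neighbour estimate $|m_{T}(\lam_h)-m_{T'}(\lam_h)|^2\lesssim h_T^{2-d}(\vertiii{\lam_h}^2_{h,\partial T}+\vertiii{\lam_h}^2_{h,\partial T'})$, lift it to the vertex values of $P_h\lam_h$ by chaining inside the patches $\omega_{\bm x}$, and then use affine scaling (equivalently, the inverse estimate the paper uses) plus the face-wise $L^2$-projection property of $I_h$ and finite overlap of the patches. The only cosmetic remark is that your $P_1$ identity should be stated as "$\lesssim$" rather than "$\sim$" (the reverse inequality fails when $P_h\lam_h|_T$ is a constant different from $m_T(\lam_h)$), but only that direction is used, and your explicit treatment of boundary vertices via $m_F(\lam_h)=0$ fills in what the paper dismisses as a "trivial modification."
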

\begin{proof}
  For each $T\in \mathcal T_h$, we denote $\omega_T:=\{T'\in\mathcal T_h:\text{ $T'$ and $T$ share a vertex}\}$
  and use $\mathcal N(T)$ to denote  the set of all vertexes of $T$.  Assume all vertexes of $T$ are interior nodes of $\mathcal T_h$, then it holds
  \begin{eqnarray}\label{eq:s P_h}
      \norm{P_h\lam_h-m_T(\lam_h)}^2_{\partial T}
      &\lesssim h_T^{d-1}\sum_{\bm x\in\mathcal N(T)}|P_h\lam_h(\bm x)-m_T(\lam_h)|^2\nonumber\\
      &\lesssim h_T^{d-1}\sum_{\bm x\in\mathcal N(T)}\sum_{\substack{T_1,T_2\in\omega_{\bm x}\nonumber\\
          T_1,T_2\text{ share a same face }}}|m_{T_1}(\lam_h)-m_{T_2}(\lam_h)|^2\nonumber\\
      &\lesssim \sum_{\bm x\in\mathcal N(T)}
      \sum_{\substack{T_1,T_2\in\omega_{\bm x}\\ T_1,T_2\text{ share a same face }}}
      \norm{m_{T_1}(\lam_h)-m_{T_2}(\lam_h)}^2_{\partial T_1 \cap\partial T_2}\nonumber\\
      &\lesssim \sum_{T'\in\omega_T}h_{T'}\vertiii{\lam_h}^2_{h,\partial T'}.
  \end{eqnarray}
 Similarly, we can show by  a trivial modification that  \eqref{eq:s P_h} also holds
  in the case that there is a vertex of $T$ that belongs to $\partial\Omega$. 
 As a result,  the estimate \eqref{eq:esti P_h} follows from
  \begin{displaymath}
    \begin{array}{lcll}
      |P_h\lam_h|^2_{1,T}
      &=&|P_h\lam_h-m_T(\lam_h)|^2_{1,T}&\\
      &\lesssim& h_T^{-2}\norm{P_h\lam_h-m_T(\lam_h)}^2_T & \text{(by inverse estimate)}\\
      &\lesssim& h_T^{-1}\norm{P_h\lam_h-m_T(\lam_h)}^2_{\partial T} & \\
      &\lesssim& \sum_{T'\in\omega_T}\vertiii{\lam_h}^2_{h,\partial T'}.&\text{(by \eqref{eq:s P_h})}
    \end{array}
  \end{displaymath}
  \par
 
 On the other hand,  from
  \begin{displaymath}
    \begin{array}{rcll}
      h_T\norm{I_hP_h\lam_h-m_T(\lam_h)}^2_{\partial T}
      &\leqslant& h_T\norm{P_h\lam_h-m_T(\lam_h)}^2_{\partial T} & \\
      &\lesssim& \sum_{T'\in\omega_T}h^2_{T'}\vertiii{\lam_h}^2_{h,\partial T'} & \text{(by \eqref{eq:s P_h})}
    \end{array}
  \end{displaymath}
 it follows  
  \begin{displaymath}
    \begin{split}
      h_T\norm{(I-I_hP_h)\lam_h}^2_{\partial T}
      &\lesssim h_T\norm{\lam_h-m_T(\lam_h)}^2_{\partial T} + h_T\norm{I_hP_h\lam_h-m_T(\lam_h)}^2_{\partial T}\\
      &\lesssim\sum_{T'\in\omega_T}h^2_{T'}\vertiii{\lam_h}^2_{h,\partial T'},
    \end{split}
  \end{displaymath}
  which indicates \eqref{eq:esti I-I_hP_h} immediately. 
  \end{proof}

\begin{rem}
Although similar estimates
were presented in \cite{Li;mg_wg;2014,Li;bpx;2014} for quasi-uniform grids, the estimates in  Lemma \ref{lem:P_h} are sharper
in the sense that $\mathcal T_h$ here is not assumed to be quasi-uniform.
\end{rem}

\par
Finally, we are in a position to prove Theorem \ref{thm:convergence}.\\
{\bf Proof of Theorem \ref{thm:convergence}}.
For any $\lam_h\in M_h$, set $\mu_h: = \lam_h-I_hP_h\lam_h$ and $v_h:=P_h\lam_h$.  Using
Lemma \ref{lem:P_h}, we have
\begin{displaymath}
  \begin{split}
    &\sum_{T\in\mathcal T_h}h_T^{-2}\norm{\mu_h}^2_{h,\partial T}+
    \frac
    {
      (1+\mathcal N_h)/(2-\omega)
    }
    {
      1-\frac{1+\mathcal N_h}{1-\mathcal N_h}{\bigg[(1+\mathcal N_h)\mathcal M_h+\mathcal N_h\bigg]^2}
    }
    \norm{v_h}^2_{\we{\mathcal A_h}}\\
    \lesssim&
    \left\{
      1+
      \frac
      {
        (1+\mathcal N_h)/(2-\omega)
      }
      {
        1-\frac{1+\mathcal N_h}{1-\mathcal N_h}{\bigg[(1+\mathcal N_h)\mathcal M_h+\mathcal N_h\bigg]^2}
      }
    \right\}
    \norm{\lam_h}^2_{\mathcal A_h},
  \end{split}
\end{displaymath}
which implies
\begin{displaymath}
  \begin{split}
    &\sup_{\norm{\lam_h}_{\mathcal A_h}=1}\inf_{\mu_h+I_hv_h=\lam_h}
    \sum_{T\in\mathcal T_h}h_T^{-2}\norm{\mu_h}^2_{h,\partial T}+
    \frac
    {
      (1+\mathcal N_h)/(2-\omega)
    }
    {
      1-\frac{1+\mathcal N_h}{1-\mathcal N_h}\bigg[(1+\mathcal N_h)\mathcal M_h+\mathcal N_h\bigg]^2
    }
    \norm{v_h}^2_{\we{\mathcal A_h}}\\
    \lesssim&
    1+
    \frac
    {
      (1+\mathcal N_h)/(2-\omega)
    }
    {
      1-\frac{1+\mathcal N_h}{1-\mathcal N_h}\bigg[(1+\mathcal N_h)\mathcal M_h+\mathcal N_h\bigg]^2
    }.
  \end{split}
\end{displaymath} 
Then  Theorem \ref{thm:convergence} follows from Lemma
\ref{lem:last} immediately.

\subsubsection{Proof of Theorem \ref{thm:convergence2}}
Let $\{\eta_i:i=1,2,\ldots, N\}$ be the standard nodal basis for $M_h$. We have the following space
decomposition: 
$$M_h=\text{span}\{\eta_1\}+\text{span}\{\eta_2\}+\ldots+\text{span}\{\eta_N\}+I_hV_h.$$
Define
$P_i:M_h\to\text{span}\{\eta_i\}$ by $\ninpro{P_i\lam_h,\eta_i} = \ninpro{\lam_h,\eta_i}$
for $i=1,2,\ldots, N$. Then, by the extended version of X-Z identity \eqref{X-Z-Identity},
we have the following lemma.
\begin{lem}\label{lem:appl X-Z 2}
  Under the conditions of Theorem \ref{thm:convergence2}, the relation
  \eqref{eq:convergence} holds with
  \begin{equation}
    K=\sup_{\norm{\lam_h}_{\mathcal A_h}=1}\inf_{\sum_{i=1}^N\mu_i+I_hv_h=\lam_h}\sum_{i=1}^N
    \norm{\mu_i+P_i\bigg(\sum_{j>i}\mu_j+I_hv_h\bigg)}^2_{\mathcal A_h} +
    \norm{v_h}^2_{\overline{\we{\we{\mathcal R_h}}}^{-1}}.
  \end{equation}
\end{lem}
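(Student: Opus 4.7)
\textbf{Proof proposal for Lemma \ref{lem:appl X-Z 2}.} The plan is to apply the extended X-Z identity of Theorem \ref{thm:X-Z identity} to the space decomposition
$M_h=\mathrm{span}\{\eta_1\}+\cdots+\mathrm{span}\{\eta_N\}+I_hV_h$,
labelling the one-dimensional subspaces $V_i:=\mathrm{span}\{\eta_i\}$ for $i=1,\dots,N$ with canonical injections, and taking $V_{N+1}:=V_h$ with injection $I_h$. The smoother on each $V_i$ ($i\le N$) is chosen to be the exact solve $\mathcal R_i=\mathcal A_i^{-1}$ (so that $\overline{\mathcal R_i}=\mathcal A_i^{-1}$ and the hypothesis $\|I-\overline{\mathcal R_i}\mathcal A_i\|_{\mathcal A_i}<1$ is trivial), while the smoother on $V_{N+1}=V_h$ is $\widetilde{\mathcal R_h}$; its X-Z symmetrization with respect to $\mathcal A_{N+1}=I_h^t\mathcal A_hI_h=\widetilde{\widetilde{\mathcal A_h}}$ is exactly $\overline{\widetilde{\widetilde{\mathcal R_h}}}$ from \eqref{def_sysm_we_we_R}, and Lemma \ref{lem:WeWeA} supplies $\|I-\overline{\widetilde{\widetilde{\mathcal R_h}}}\widetilde{\widetilde{\mathcal A_h}}\|_{\widetilde{\widetilde{\mathcal A_h}}}<1$. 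Thus all hypotheses of Theorem \ref{thm:X-Z identity} are in force.

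Next, I would check that the composite iteration defined by Theorem \ref{thm:X-Z identity} for this choice of subspaces and smoothers is exactly $\mathcal B_h$. The point is that when $\mathcal R_h$ is one sweep of forward Gauss--Seidel, a direct coordinate-wise computation identifies $\mathcal R_h$ with the composition (in the order $i=1,\dots,N$) of the exact subspace corrections on the one-dimensional $V_i$; similarly $\mathcal R_h^t$ equals the reverse composition. Consequently, the $2(N+1)$ updates produced by Theorem \ref{thm:X-Z identity} collapse into: a Gauss--Seidel sweep, then an auxiliary-space correction with $\widetilde{\mathcal R_h}$, then the adjoint auxiliary-space correction with $\widetilde{\mathcal R_h}^t$, then a backward Gauss--Seidel sweep, which is precisely the four-step procedure defining $\mathcal B_h$ in Section \ref{Algorithm}.

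It then remains to simplify the $K$ furnished by \eqref{X-Z-Identity}. For $i\le N$, since $\mathcal R_i=\mathcal R_i^t=\mathcal A_i^{-1}$ on a one-dimensional space, one checks by a two-line computation that $\mathcal R_i^tI_i^t\mathcal A_h$ coincides with the $\mathcal A_h$-orthogonal projector $P_i$ onto $\mathrm{span}\{\eta_i\}$, and that $\|\cdot\|_{\overline{\mathcal R_i}^{-1}}$ agrees with $\|\cdot\|_{\mathcal A_h}$ on $V_i$; writing $v_i=\mu_i$ yields the first $N$ summands in the claimed formula. For $i=N+1$ the tail sum $\sum_{j>i}$ is empty, so the last summand reduces to $\|v_h\|^2_{\overline{\widetilde{\widetilde{\mathcal R_h}}}^{-1}}$, completing the identification. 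The main delicate point I expect is the first step above: carefully matching the $2(N+1)$ abstract updates with the four-step definition of $\mathcal B_h$, and confirming that using $\widetilde{\mathcal R_h}$ (designed as an approximation to $\widetilde{\mathcal A_h}^{-1}$) as a smoother for $\widetilde{\widetilde{\mathcal A_h}}$ is admissible, which is exactly what Lemma \ref{lem:WeWeA} was proved for.
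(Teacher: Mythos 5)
Your proposal is correct and follows essentially the same route as the paper: the paper simply applies the extended X-Z identity (Theorem \ref{thm:X-Z identity}) to the decomposition $M_h=\mathrm{span}\{\eta_1\}+\cdots+\mathrm{span}\{\eta_N\}+I_hV_h$ with $P_i$ the $\mathcal A_h$-orthogonal projection onto $\mathrm{span}\{\eta_i\}$, leaving implicit exactly the verifications you spell out (exact solves on the one-dimensional subspaces reproducing the Gauss--Seidel sweeps, the identification $\mathcal R_i^tI_i^t\mathcal A_h=P_i$, and the admissibility of $\widetilde{\mathcal R_h}$ on the auxiliary block via Lemma \ref{lem:WeWeA}).
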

\begin{lem}\label{lem:esti decom 2}
  Under {\bf Assumptions \ref{assum:a_h}-\ref{assum:M_h N_h}},  it holds
  \begin{equation}\label{eq:esti decom 2}
    \sum_{i=1}^N\norm{\mu_i+P_i\bigg(\sum_{j>i}\mu_j+I_hv_h\bigg)}^2_{\mathcal A_h}
    \lesssim\sum_{T\in\mathcal T_h}h_T^{-2}\norm{\mu_h}^2_{h,\partial T}+
    (1+\mathcal N_h)\norm{v_h}^2_{\we{\mathcal A_h}}
  \end{equation}
  for any $v_h\in V_h$ and 
  $\mu_i\in\text{span}\{\eta_i\}$ ($i=1,2,\ldots,N$)    with $\mu_h=\sum_{i=1}^N\mu_i$.
\end{lem}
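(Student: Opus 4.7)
The plan is to begin from the decomposition
\[
  \mu_i + P_i\Bigl(\sum_{j>i}\mu_j + I_h v_h\Bigr)
  = \Bigl[\mu_i + P_i\sum_{j>i}\mu_j\Bigr] + P_i(I_h v_h)
\]
and apply $(a+b)^2\le 2a^2+2b^2$. This reduces \eqref{eq:esti decom 2} to the two independent estimates
\[
  S_1 := \sum_{i=1}^N \norm{\mu_i + P_i\sum_{j>i}\mu_j}^2_{\mathcal A_h}
      \lesssim \sum_{T\in\mathcal T_h} h_T^{-2}\norm{\mu_h}_{h,\partial T}^2,
\]
\[
  S_2 := \sum_{i=1}^N \norm{P_i(I_h v_h)}^2_{\mathcal A_h}
      \lesssim (1+\mathcal N_h)\norm{v_h}_{\we{\mathcal A_h}}^2.
\]

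For $S_1$ I would invoke the symmetric Gauss--Seidel bookkeeping collected in Appendix A. Writing $\mu_i=\nu_i\eta_i$ and $a_{ij}:=a_h(\eta_i,\eta_j)$, a direct computation (using $P_i\mu_i=\mu_i$ and $P_iw=(\ninpro{w,\eta_i}/a_{ii})\eta_i$) shows that each summand in $S_1$ equals $\bigl(\sum_{j\ge i}a_{ij}\nu_j\bigr)^2/a_{ii}$; since the HDG coupling is local, each row of the stiffness matrix $(a_{ij})$ has only $O(1)$ nonzero entries, so a Cauchy--Schwarz step yields $S_1\lesssim\sum_i\nu_i^2 a_{ii}$. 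Assumption \ref{assum:a_h} supplies the element-local estimate $a_{ii}=a_h(\eta_i,\eta_i)\sim\vertiii{\eta_i}_h^2\sim h_{T_i}^{d-2}$, where $T_i$ is any element adjacent to the support of $\eta_i$. Combining this with the standard face-wise nodal-basis equivalence $\norm{\sum_i\nu_i\eta_i}_F^2\sim |F|\sum_{i\text{ on }F}\nu_i^2$ gives $\sum_i\nu_i^2 a_{ii}\sim\sum_T h_T^{-2}\norm{\mu_h}_{h,\partial T}^2$, matching the first term on the right of \eqref{eq:esti decom 2}.

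For $S_2$ I would use the dual of the same local sparsity property of $\mathcal A_h$, namely $\sum_i\norm{P_iw}^2_{\mathcal A_h}\lesssim\norm{w}^2_{\mathcal A_h}$ for every $w\in M_h$ (equivalent to the spectral equivalence of $\mathcal A_h$ with its diagonal). Applied with $w=I_hv_h$ this yields $S_2\lesssim\norm{I_hv_h}^2_{\mathcal A_h}$, and the chain
\[
  \norm{I_hv_h}^2_{\mathcal A_h}
  =\inpro{\mathcal A_h I_h v_h,I_h v_h}
  =(I_h^t\mathcal A_h I_h v_h,v_h)
  =\norm{v_h}^2_{\we{\we{\mathcal A_h}}}
\]
combined with \eqref{eq:s v_h} finishes this piece.

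The main obstacle, I expect, is carrying out the $S_1$ step without any global inverse inequality: on non-quasi-uniform meshes every link in the Gauss--Seidel bookkeeping must be executed element by element, using only the element-local form of the bound on $a_{ii}$ furnished by Assumption \ref{assum:a_h}. Appendix A is tailored precisely to this purpose, so once its local estimates are in place, $S_1$ and $S_2$ should assemble into \eqref{eq:esti decom 2} without further difficulty.
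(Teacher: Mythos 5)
Your proof is correct and follows essentially the same route as the paper: the same splitting into $\mu_i+P_i\sum_{j>i}\mu_j$ and $P_iI_hv_h$, the same bound $\sum_i\norm{P_iI_hv_h}^2_{\mathcal A_h}\lesssim\norm{I_hv_h}^2_{\mathcal A_h}=\norm{v_h}^2_{\we{\we{\mathcal A_h}}}\leqslant(1+\mathcal N_h)\norm{v_h}^2_{\we{\mathcal A_h}}$, and for the $\mu$-part a combination of locality, {\bf Assumption \ref{assum:a_h}} and nodal-basis scaling (the paper phrases this via the finite overlap of the patches $\omega_i$, the projection property of $P_i$, and the estimate $\sum_i\norm{\mu_i}^2_{h,\partial T}\lesssim\norm{\mu_h}^2_{h,\partial T}$, rather than through the explicit entries $a_{ij}$, but the content is identical). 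One small caveat: the inequality $\sum_i\norm{P_iw}^2_{\mathcal A_h}\lesssim\norm{w}^2_{\mathcal A_h}$ is equivalent only to the one-sided spectral bound $\mathcal A_h\lesssim\mathrm{diag}(\mathcal A_h)$ (Gershgorin plus sparsity plus $|a_{ij}|\leqslant\sqrt{a_{ii}a_{jj}}$), not to a two-sided ``spectral equivalence with the diagonal,'' which would fail for a stiffness matrix; your argument is unaffected since only the upper bound is used.
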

\begin{proof}
  Define
  $\Sigma_i:=\{T:\text{there exists one face $F$ of $T$ such that $\eta_i|_F\not=0$}\}$
  and $\omega_i:=\cup_{T\in\Sigma_i}T$. Apparently, for any given $1\leqslant i_0\leqslant N$,
  there are at most $J$ of $\{\omega_i\}$, $\{\omega_{i_j}:j=1,2,\ldots, J\}$, such that
  $\omega_{i_0}\cap\omega_{i_j}\not=\phi$ ($j=1,2,\ldots, J$), where $J$ only depends on the dimension number $d$ and the shape regularity parameter
  $\rho$. 
  
   It is easy to verify 
  \begin{equation}\label{eq:s P_iI_h}
    \begin{split}
      \sum_{i=1}^N\norm{P_iI_hv_h}^2_{\mathcal A_h}
      &\lesssim\norm{I_hv_h}^2_{\mathcal A_h}\\
      &\lesssim(1+\mathcal N_h)\norm{v_h}^2_{\we{\mathcal A_h}}\qquad\text{(by \eqref{eq:s v_h})}
    \end{split}
  \end{equation}
  and
  \begin{displaymath}
    \begin{split}
      \sum_{i=1}^N\norm{\mu_i+P_i\sum_{j>i}\mu_j}^2_{\mathcal A_h}= \sum_{i=1}^N\norm{\mu_i+P_i\sum_{j>i,\omega_i\cap\omega_j\not=\phi}\mu_j}^2_{\mathcal A_h}
      &\lesssim\sum_{i=1}^N\norm{\mu_i}^2_{\mathcal A_h}\\
      &\lesssim\sum_{i=1}^N\vertiii{\mu_i}^2_h.\qquad\text{(by {\bf Assumption \ref{assum:a_h}})}
    \end{split}
  \end{displaymath}
  Then, from
  \begin{equation}
    \vertiii{\mu}_{h,\partial T}\lesssim h_T^{-1}\norm{\mu}_{h,\partial T},
    ~\forall\mu\in M(\partial T)
  \end{equation}
  it follows
  \begin{equation}\label{eq:s mu_i}
    \begin{split}
      \sum_{i=1}^N\norm{\mu_i+P_i\sum_{j>i}\mu_j}^2_{\mathcal A_h}
      &\lesssim\sum_{i=1}^N\sum_{T\in\mathcal T_h}h_T^{-2}\norm{\mu_i}^2_{h,\partial T}\\
      &\lesssim\sum_{T\in\mathcal T_h}h_T^{-2}\sum_{i=1}^N\norm{\mu_i}^2_{h,\partial T}\\
      &\lesssim\sum_{T\in\mathcal T_h}h_T^{-2}\norm{\mu_h}^2_{h,\partial T},
    \end{split}
  \end{equation}
 where we have used the estimate
  \begin{displaymath}
    \sum_{i=1}^N\norm{\mu_i}^2_{h,\partial T}\lesssim\norm{\mu_h}^2_{h,\partial T},
  \end{displaymath}
  which can be proved through  standard scaling arguments. Consequently, the desired estimate \eqref{eq:esti decom 2}
  follows immediately from \eqref{eq:s P_iI_h} and \eqref{eq:s mu_i}. 
\end{proof}
By Lemmas \ref{lem:appl X-Z 2}-\ref{lem:esti decom 2} and \eqref{eq:v_h WeWeR_h},
we  immediately obtain the lemma below.
\begin{lem}\label{lem:last 2}
  Under the conditions of Theorem \ref{thm:convergence2}, the relation \eqref{eq:convergence} holds
  with
  \begin{equation}\label{eq:last K 2}
    K\lesssim
    \sup_{\norm{\lam_h}_{\mathcal A_h}=1}\inf_{\mu_h+I_hv_h=\lam_h}
    \sum_{T\in\mathcal T_h}h_T^{-2}\norm{\mu_h}^2_{h,\partial T} +
    \frac
    {
      1+\mathcal N_h
    }
    {
      1-
      \frac
      {
        1+\mathcal N_h
      }
      {
        1-\mathcal N_h
      }
      \bigg[
      (1+\mathcal N_h)\mathcal M_h+\mathcal N_h
      \bigg]^2
    }
    \norm{v_h}^2_{\we{\mathcal A_h}}.
  \end{equation}
\end{lem}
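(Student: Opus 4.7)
The plan is to obtain \eqref{eq:last K 2} by straightforward assembly of the three results immediately above it: Lemma~\ref{lem:appl X-Z 2}, Lemma~\ref{lem:esti decom 2}, and the subspace estimate \eqref{eq:v_h WeWeR_h}. Lemma~\ref{lem:appl X-Z 2}, which is the extended X--Z identity applied to the decomposition $M_h=\mathrm{span}\{\eta_1\}+\cdots+\mathrm{span}\{\eta_N\}+I_hV_h$ (with $\omega=1$ coming from Gauss--Seidel), already exhibits $K$ as a sup--inf over two additive contributions: the ``fine-level'' cost $\sum_{i=1}^N\|\mu_i+P_i(\sum_{j>i}\mu_j+I_hv_h)\|^2_{\mathcal A_h}$ and the ``coarse-level'' cost $\|v_h\|^2_{\overline{\we{\we{\mathcal R_h}}}^{-1}}$. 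The task is therefore to bound each of these in the form appearing on the right-hand side of \eqref{eq:last K 2}.

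First I would replace the fine-level cost using Lemma~\ref{lem:esti decom 2}, producing the bound $\sum_{T\in\mathcal T_h}h_T^{-2}\|\mu_h\|^2_{h,\partial T}+(1+\mathcal N_h)\|v_h\|^2_{\we{\mathcal A_h}}$ with $\mu_h:=\sum_{i=1}^N\mu_i$; this immediately supplies the first summand of \eqref{eq:last K 2} and an auxiliary multiple of $\|v_h\|^2_{\we{\mathcal A_h}}$. Next I would bound the coarse-level cost by invoking \eqref{eq:v_h WeWeR_h}, which gives $\|v_h\|^2_{\overline{\we{\we{\mathcal R_h}}}^{-1}}\le\dfrac{1+\mathcal N_h}{1-\frac{1+\mathcal N_h}{1-\mathcal N_h}[(1+\mathcal N_h)\mathcal M_h+\mathcal N_h]^2}\|v_h\|^2_{\we{\mathcal A_h}}$, precisely the coefficient of $\|v_h\|^2_{\we{\mathcal A_h}}$ that appears in \eqref{eq:last K 2}.

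The final step is to notice that Assumption~\ref{assum:M_h N_h} guarantees the denominator in \eqref{eq:v_h WeWeR_h} lies strictly in $(0,1)$, so the coefficient $(1+\mathcal N_h)$ inherited from Lemma~\ref{lem:esti decom 2} is dominated by the larger coefficient from \eqref{eq:v_h WeWeR_h}. Consequently the two $\|v_h\|^2_{\we{\mathcal A_h}}$ contributions may be combined into a single term with the implied constant absorbed into $\lesssim$, and then taking the sup--inf over all admissible decompositions $\mu_h+I_hv_h=\lam_h$ with $\|\lam_h\|_{\mathcal A_h}=1$ yields exactly \eqref{eq:last K 2}. There is no substantive obstacle in this proof; the only point requiring a moment of care is verifying that the denominator stays bounded away from zero under Assumption~\ref{assum:M_h N_h}, which is what legitimizes hiding the additive $(1+\mathcal N_h)\|v_h\|^2_{\we{\mathcal A_h}}$ inside the fraction.
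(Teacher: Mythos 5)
Your proposal is correct and follows exactly the route the paper takes: the paper states that Lemma \ref{lem:last 2} follows immediately from Lemmas \ref{lem:appl X-Z 2}--\ref{lem:esti decom 2} and \eqref{eq:v_h WeWeR_h}, which is precisely the assembly you carry out. Your additional observation that the $(1+\mathcal N_h)\norm{v_h}^2_{\we{\mathcal A_h}}$ term from Lemma \ref{lem:esti decom 2} is dominated by the coarse-level coefficient (since the denominator lies in $(0,1)$ under Assumption \ref{assum:M_h N_h}) is the small justification the paper leaves implicit, and it is valid.
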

Finally, the rest of the proof of Theorem \ref{thm:convergence2} goes exactly the same way as
that of Theorem \ref{thm:convergence}.

\section{Applications}\label{sec:appl}

This section is devoted to some applications of the algorithm analysis in Section \ref{main-results}  to some existing HDG methods as well as WG methods. 

In the two-level algorithm, {\bf Algorithm
\ref{algo:two-level}}, described in Section \ref{Algorithm}, we set the operator $R_h$ to be the symmetric Gauss-Seidel iteration or one sweep of Gauss-Seidel iteration.  As shown  in {\bf Remark 3.3} and {\bf Appendix A}, the symmetric Gauss-Seidel iteration always satisfies {\bf Assumption \ref{assum:R_h 1}}.  Thus, according to Theorems \ref{thm:convergence}-\ref{thm:convergence2}, we only 
need to verify {\bf Assumptions \ref{assum:a_h}-\ref{assum:M_h N_h}} for the corresponding methods.

We consider the following four types of HDG methods: For any $T\in \mathcal T_h$, $k\geqslant 0$,
\begin{description}
\item[Type 1. ] $V(T) = P_k(T)$, $\bm W(T) = [P_k(T)]^d + P_k(T)\bm x$ and $\alpha_T = 0$. The corresponding 
				HDG scheme \eqref{discretization_hdg} is the   hybridized RT mixed element method 
				(\cite{ArnoldBrezzi1985}).
\item[Type 2. ] $V(T) = P_{k-1}(T)$, $\bm W(T) = [P_k(T)]^d~(k\geqslant 1)$ and $\alpha_T = 0$. The 
					corresponding HDG method is the   hybridized BDM mixed element method 
					(\cite{BrezziDouglasMarini1985}).
\item[Type 3. ] $V(T) = P_k(T)$, $\bm W(T) = [P_k(T)]^d$ and $\alpha_T = O(1)$. The corresponding HDG method was proposed in \cite{Cockburn;unified HDG;2009} and analyzed in \cite{projection_based_hdg}. For the sake of simplicity, we assume  for this HDG method that $\alpha_T$ is
constant on $\partial T$ but it may take different values for different elements $T$.
\item[Type 4. ] $V(T) = P_{k+1}(T)$, $\bm W(T) = [P_k(T)]^d$ and $\alpha_T = O(h_T^{-1})$. The corresponding 
					HDG method was   analyzed in \cite{Li;hdg;2014}
\end{description}

For these HDG methods,   {\bf Assumption \ref{assum:a_h}} has been verified in 
\cite{Gopalakrishnan;2003,Cockburn;2014} for {\bf Types 1-3} methods and in \cite{Li;hdg;2014} for {\bf Type 4} method.   Then it suffices to  verify
{\bf Assumption \ref{assum:M_h N_h}}.

For the diffusion-dispersion tensor  $\bm a$, we consider two cases:    piecewise constant coefficients and variable coefficients.

\subsection{Piecewise constant coefficients}\label{ssec:hdg constant}
In this subsection, we assume $\bm a$ to be a piecewise constant matrix, and, without lose of generality, we just take $\bm a$
to be the identity matrix, since   the analysis is the same as that of the former case.

Let $w\in P_1(T)$ and set $\lam=P_T^{\partial}w$ in the local problem \eqref{eq:local hdg}. For {\bf Types 1-2} HDG methods, it is trivial that
\begin{displaymath}
  \bmsigma_{\lam} = \bmnabla w.
\end{displaymath}
For {\bf Type 3} ($k\geqslant 1$) and {\bf Type 4} HDG methods, we can easily obtain
\begin{displaymath}
  P_T^{\partial}u_{\lam}=\lam, \bmsigma_{\lam}=\bmnabla w.
\end{displaymath}
Thus, by the definitions \eqref{De-I_h}-\eqref{De-A_h-We} and  \eqref{we-we-A-h},  for all the mentioned cases above we easily have
\begin{equation}
  \we{\we{\mathcal A_h}} = \we{\mathcal A_h},
\end{equation}
which, together with the definition $\mathcal N_h:=\norm{I-\we{\mathcal A_h}^{-1}\we{\we{\mathcal A_h}}}_{\we{\mathcal A_h}}$ (cf.  \eqref{MN_h}) and {\bf Remark \ref{rem32}}, indicates the following conclusion.
\begin{pro}\label{thm:hdg124}
  For {\bf Types 1-2},  {\bf Type 3} ($k\geqslant 1$) and {\bf Type 4} HDG methods, it holds
  \begin{equation}
\mathcal N_h=0,
\end{equation}
which implies that any choice of $\we{\mathcal R_h}$ satisfying \eqref{M-h-cons} ensures {\bf Assumption \ref{assum:M_h N_h}} to hold.
\end{pro}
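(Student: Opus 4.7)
My plan is to reduce the claim $\mathcal N_h=0$ to the operator identity $\widetilde{\widetilde{\mathcal A_h}}=\widetilde{\mathcal A_h}$ on $V_h$; by the definition \eqref{MN_h} this is exactly what $\mathcal N_h=0$ asserts. Once it is in hand, substituting $\mathcal N_h=0$ into \eqref{constr-tilde-Rh} reduces \textbf{Assumption \ref{assum:M_h N_h}} to the single condition \eqref{M-h-cons}, which is the second half of the statement; this reduction is already recorded in \textbf{Remark \ref{rem32}}. Unwrapping \eqref{De-A_h-We}, \eqref{we-we-A-h} and \eqref{De-A_h}, the identity $\widetilde{\widetilde{\mathcal A_h}}=\widetilde{\mathcal A_h}$ on $V_h$ is equivalent to
\[
a_h(I_hu_h,I_hv_h)=(\bm a\bmnabla u_h,\bmnabla v_h)\qquad\forall\,u_h,v_h\in V_h,
\]
so this single scalar identity is the only thing I must verify.

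My first step is to characterize $I_h$ face by face. Testing the defining relation \eqref{De-I_h} with $\mu_h$ supported on a single interior face $F$ (using that elements of $V_h$ vanish on $\partial\Omega$) shows that $(I_hv_h)|_F$ is the $L^2(F)$-projection of $v_h|_F$ onto $P_k(F)$; equivalently $(I_hv_h)|_{\partial T}=P_T^{\partial}(v_h|_{\partial T})$ for every $T\in\mathcal T_h$. Fixing $T$ and setting $w:=v_h|_T\in P_1(T)$, $\lambda:=(I_hv_h)|_{\partial T}=P_T^{\partial}w$, I then invoke the local identities recorded in the paragraph preceding the proposition: a direct inspection of \eqref{eq:local hdg} with the trial pair $(w,\bmnabla w)\in V(T)\times\bm W(T)$ gives $\bmsigma_\lambda=\bmnabla w$ in all four types, and moreover $P_T^{\partial}u_\lambda=\lambda$ for Type 3 with $k\geqslant 1$ and Type 4; for Types 1--2 the stabilization already satisfies $\alpha_T=0$.

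Substituting these facts into \eqref{eq:a_h hdg} annihilates the $\alpha_T$-penalty term in every case under consideration, leaving
\[
a_h(I_hu_h,I_hv_h)=\sum_{T\in\mathcal T_h}(\bm c\,\bmnabla u_h,\bmnabla v_h)_T=(\bm a\bmnabla u_h,\bmnabla v_h),
\]
since $\bm c=\bm a^{-1}$ and $\bm a\equiv I$ under the normalization adopted in this subsection. This is the identity isolated in the first paragraph of the plan, so $\widetilde{\widetilde{\mathcal A_h}}=\widetilde{\mathcal A_h}$ and therefore $\mathcal N_h=0$.

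I do not anticipate a serious obstacle: the argument is essentially bookkeeping built on the local solvability identities already singled out in the text. The only delicate point is the restriction $k\geqslant 1$ on Type 3. When $k=0$ one has $V(T)=P_0(T)\not\ni w$, so the pair $(w,\bmnabla w)$ is no longer admissible in \eqref{eq:local hdg}, the equality $P_T^{\partial}u_\lambda=\lambda$ fails, and the penalty term survives; this marks the sharp boundary of the argument and explains the exclusion of Type 3 with $k=0$ from the statement.
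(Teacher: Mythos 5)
Your argument is correct and is essentially the paper's own proof: the paper likewise sets $\lambda=P_T^{\partial}w$ for $w\in P_1(T)$ in the local problem \eqref{eq:local hdg}, observes $\bmsigma_{\lambda}=\bmnabla w$ (and $P_T^{\partial}u_{\lambda}=\lambda$ for Type 3 with $k\geqslant 1$ and Type 4), concludes $\we{\we{\mathcal A_h}}=\we{\mathcal A_h}$ and hence $\mathcal N_h=0$, and obtains the reduction of Assumption \ref{assum:M_h N_h} to \eqref{M-h-cons} from Remark \ref{rem32}. One cosmetic caveat: for Type 1 with $k=0$ and Type 2 with $k=1$ one has $V(T)=P_0(T)\not\ni w$, so the admissible trial pair in \eqref{eq:local hdg} is $(\text{a projection of }w,\bmnabla w)$ rather than $(w,\bmnabla w)$; this does not affect your conclusion, since $\bmsigma_{\lambda}=\bmnabla w$ still holds and $\alpha_T=0$ for those types makes the $u$-component irrelevant in \eqref{eq:a_h hdg}.
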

For  {\bf Type 3} HDG method in the case $k=0$, we have the following result.
\begin{pro}\label{thm:hdg3}
  For {\bf Type 3} ($k=0$) HDG method, it holds
  \begin{equation}\label{eq:1}
  \mathcal N_h  \lesssim h, 
 \end{equation}
 which implies that  sufficiently small mesh size $h$ can ensure {\bf Assumption \ref{assum:M_h N_h}} to hold  if   $\we{\mathcal R_h}$ satisfies \eqref{M-h-cons}, i.e. $\mathcal M_h<1$, with $\mathcal M_h:=\norm{I-\we{\mathcal R_h}\we{\mathcal A_h}}_{\we{\mathcal A_h}}$ being  independent of $h$.
\end{pro}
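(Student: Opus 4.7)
The plan is to compute the action of $\widetilde{\widetilde{\mathcal A_h}}$ on $V_h$ explicitly by solving the local HDG problem \eqref{eq:local hdg} in closed form for Type 3 with $k=0$, then bound the difference $\widetilde{\widetilde{\mathcal A_h}}-\widetilde{\mathcal A_h}$ by an $O(h)$ multiple of $\widetilde{\mathcal A_h}$. Since $\widetilde{\mathcal A_h}^{-1}\widetilde{\widetilde{\mathcal A_h}}$ is SPD with respect to $(\cdot,\cdot)_{\widetilde{\mathcal A_h}}$, it will follow that $\mathcal N_h$ is controlled by the maximum relative spread of this ratio.

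First, I would record that for $v_h\in V_h$ (continuous piecewise linear) and $k=0$, the prolongation satisfies $I_hv_h|_F=m_F(v_h)$ for every face $F$; this is immediate by testing the defining identity \eqref{De-I_h} against the indicator of a face, and note that this coincides with $P_T^\partial v_h$ on each $\partial T$. Next, given $w\in P_1(T)$ and $\lambda=P_T^\partial w$, I would solve \eqref{eq:local hdg} for Type 3 at $k=0$: because $\bm W(T)=[P_0(T)]^d$ has zero divergence, the first equation reduces to $(\bmsigma_\lambda,\bm\tau)_T=\langle\lambda,\bm\tau\cdot\bm n\rangle_{\partial T}$, and by the divergence theorem applied to $w\bm\tau$ with $\bm\tau$ constant, this forces $\bmsigma_\lambda=\nabla w$. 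With $V(T)=P_0(T)$, the second equation collapses to $u_\lambda=\frac{1}{|\partial T|}\int_{\partial T}\lambda$, and $P_T^\partial u_\lambda=u_\lambda$. Consequently
\begin{equation*}
(\widetilde{\widetilde{\mathcal A_h}}u_h,v_h)=a_h(I_hu_h,I_hv_h)=(\nabla u_h,\nabla v_h)+\sum_{T\in\mathcal T_h}\alpha_T\langle P_T^\partial u_h-m_{\partial T}(u_h),P_T^\partial v_h-m_{\partial T}(v_h)\rangle_{\partial T},
\end{equation*}
where $m_{\partial T}(\cdot)$ abbreviates the average on $\partial T$. Since $(\widetilde{\mathcal A_h}u_h,v_h)=(\nabla u_h,\nabla v_h)$ (using $\bm a=I$), this identifies the entire deviation of $\widetilde{\widetilde{\mathcal A_h}}$ from $\widetilde{\mathcal A_h}$ with the nonnegative stabilization remainder.

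The remaining task is to show this remainder is $O(h)$ times $(\widetilde{\mathcal A_h}v_h,v_h)$. On a single $T$, writing $v_h=\nabla v_h\cdot \bm x+c$, one has $m_F(v_h)-m_{\partial T}(v_h)=\nabla v_h\cdot(\bar{\bm x}_F-\bar{\bm x}_{\partial T})$, and $|\bar{\bm x}_F-\bar{\bm x}_{\partial T}|\lesssim h_T$; summing over faces and using $|\partial T|\sim h_T^{d-1}$, $|T|\sim h_T^d$ (by shape regularity), yields $\|P_T^\partial v_h-m_{\partial T}(v_h)\|_{\partial T}^2\lesssim h_T\|\nabla v_h\|_T^2$. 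With $\alpha_T=O(1)$, summing over $T$ produces the desired bound $0\le ((\widetilde{\widetilde{\mathcal A_h}}-\widetilde{\mathcal A_h})v_h,v_h)\lesssim h\,(\widetilde{\mathcal A_h}v_h,v_h)$, so the eigenvalues of $\widetilde{\mathcal A_h}^{-1}\widetilde{\widetilde{\mathcal A_h}}$ lie in $[1,1+Ch]$ and $\mathcal N_h\lesssim h$.

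The main obstacle, modest as it is, is the explicit closed-form solution of the local problem at $k=0$: the lowest-order space $\bm W(T)=[P_0(T)]^d$ carries no divergence information, so one must argue carefully that the remaining equations uniquely determine $\bmsigma_\lambda$ and $u_\lambda$ and then match the computation with $\lambda=P_T^\partial w$. Once that closed form is secured, the $O(h)$ estimate is a routine scaling/averaging argument for affine functions, and the conclusion on $\mathcal N_h$ follows by symmetry of $\widetilde{\mathcal A_h}^{-1}\widetilde{\widetilde{\mathcal A_h}}$ in the $\widetilde{\mathcal A_h}$-inner product.
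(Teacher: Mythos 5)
Your proposal is correct and follows essentially the same route as the paper: solve the local problem in closed form to get $\bmsigma_\lambda=\bmnabla w$ and $u_\lambda=m_T(w)$, identify $\we{\we{\mathcal A_h}}-\we{\mathcal A_h}$ with the stabilization remainder, and bound that remainder by $O(h_T)\norm{\bmnabla v_h}_T^2$ elementwise. The only (immaterial) difference is the last step: you bound the Rayleigh quotient of the symmetric operator $\we{\mathcal A_h}^{-1}\we{\we{\mathcal A_h}}$ directly, whereas the paper runs the equivalent auxiliary variational problem to estimate $\norm{(I-\we{\mathcal A_h}^{-1}\we{\we{\mathcal A_h}})u_h}_{\we{\mathcal A_h}}$.
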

\begin{proof}
  For any $w\in P_1(T)$, set $\lam=P_T^{\partial}w$ in the local problem \eqref{eq:local hdg}, then
  it holds
  \begin{equation}\label{eq:k=0}
    \bmsigma_{\lam}=\bmnabla w,~u_{\lam}=m_T(w)= \frac{1}{|\partial T|}\int_{\partial T}w.
  \end{equation}
  Consider an auxiliary  problem as follows: for any $u_h\in V_h$, seek $v_h\in V_h$ such that
  \begin{displaymath}
    (u_h,w_h)_{\we{\we{\mathcal A_h}}}=(v_h,w_h)_{\we{\mathcal A_h}},
    ~\forall w_h\in V_h.
  \end{displaymath}
  By \eqref{eq:k=0}, we easily obtain
  \begin{displaymath}
    (\bmnabla u_h,\bmnabla w_h)+\bndint{\alpha_T(m_T(u_h)-P_T^{\partial}u_h),
      m_T(w_h)-P_T^{\partial}w_h} =
    (\bmnabla v_h,\bmnabla w_h), ~\forall w_h\in V_h,
  \end{displaymath}
  and it follows 
  \begin{equation}\label{eq:err}
    \begin{split}
      (\bmnabla(u_h-v_h),\bmnabla w_h)=-\bndint{\alpha_T(m_T(u_h)-P_T^{\partial}u_h),
        m_T(w_h)-P_T^{\partial}w_h},
      ~\forall w_h\in V_h.
    \end{split}
  \end{equation}
  Since 
  \begin{equation}\label{eq:I-m_T}
    \norm{v-m_T(v)}_{\partial T} \lesssim h_T^{\frac{1}{2}}\norm{\bmnabla v}_T,
    ~\forall v\in H^1(T),
  \end{equation}
   taking $w_h=u_h-v_h$ in \eqref{eq:err} we have
  \begin{displaymath}
    \begin{array}{rcll}
      \norm{u_h-v_h}^2_{\we{\mathcal A_h}}
      &\leqslant& \sum\limits_{T\in\mathcal T_h}\alpha_T\norm{m_T(u_h)-u_h}_{\partial T}
                  \norm{m_T(w_h)-w_h}_{\partial T}&\\
      &\lesssim& \sum\limits_{T\in\mathcal T_h}\alpha_Th_T\norm{\bmnabla u_h}_T
                 \norm{\bmnabla w_h}_T &\text{(by \eqref{eq:I-m_T})}\\
      &\lesssim& \max\limits_{T\in\mathcal T_h}\alpha_Th_T\norm{u_h}_{\we{\mathcal A_h}}
                 \norm{u_h-v_h}_{\we{\mathcal A_h}},&
    \end{array}
  \end{displaymath}
  which leads to
  \begin{equation}\label{eq:567}
    \norm{u_h-v_h}_{\we{\mathcal A_h}}\lesssim
    \max\limits_{T\in\mathcal T_h}\alpha_Th_T\norm{u_h}_{\we{\mathcal A_h}},
  \end{equation}
  i.e.
  \begin{displaymath}
    \norm{(I-\we{\mathcal A_h}^{-1}\we{\we{\mathcal A_h}})u_h}_{\we{\mathcal A_h}}
    \lesssim\max_{T\in\mathcal T_h}\alpha_Th_T\norm{u_h}_{\we{\mathcal A_h}},
    ~\forall u_h\in V_h,
  \end{displaymath}
  which, by recalling $\alpha_T=  O(1)$, yields \eqref{eq:1} immediately.
\end{proof}
\begin{rem}
  By Theorems \ref{thm:convergence}-\ref{thm:convergence2}, it is easy to derive the convergence rate (independent of mesh size) of a V-cycle HDG multigrid in
  \cite{Gopalakrishnan;2009}, where full elliptic regularity ($\Omega$
  was assumed to be convex) was required. However, our analysis does not   require  full  regularity.
\end{rem}
\begin{rem}
  From Theorem \ref{thm:convergence} and Proposition \ref{thm:hdg3}, in order to the convergence of {\bf Algorithm
\ref{algo:two-level}}, we have to require 
  $h$ to be small
  enough. This
  is in agreement with the theoretical result   in \cite{Cockburn;2014}.
\end{rem}

Suppose $\we{\mathcal R_h}$ satisfies \eqref{M-h-cons}. We summarize   this subsection as follows:  
\begin{itemize}
\item For {\bf Type 1-2}, {\bf Type 3} ($k\geqslant 1$) and {\bf Type 4} HDG
  methods, both Theorem \ref{thm:convergence} and Theorem \ref{thm:convergence2}
  hold with
  $$
  K\lesssim 1+\frac{1}{1-\norm{I-\overline{\we{\mathcal R_h}}\we{\mathcal A_h}}_{\we{\mathcal A_h}}}.
  $$
  
\item For {\bf Type 3} ($k=0$) HDG method,  the mesh size $h$ should be sufficiently small   to ensure the convergence of {\bf Algorithm
\ref{algo:two-level}}.
\end{itemize}

\subsection{Variable coefficients}\label{Variable coefficients}
In this subsection, we assume $\bm a\in [W^{1,\infty}(\mathcal T_h)]^{d\times d}$, where
$W^{1,\infty}(\mathcal T_h):=\{a\in L^{\infty}(\Omega):\bmnabla a|_T\in [L^{\infty}(T)]^d,
~\forall T\in\mathcal T_h\}$. 
In the analysis below, we only consider {\bf Types 1-2} and
{\bf Type 3} ($k\geqslant 1$) HDG methods, since by the technique used here, it is easy to derive similar
results for other HDG methods. Following the same routines as in Section \ref{ssec:hdg constant} (cf. Propositions \ref{thm:hdg124}-\ref{thm:hdg3}), we only need to  estimate the number 
 $\mathcal N_h=\norm{I-\we{\mathcal A_h}^{-1}\we{\we{\mathcal A_h}}}_{\we{\mathcal A_h}}$. 
\par
\begin{lem}
  For {\bf Types 1-2} HDG methods, it holds 
\begin{equation}\label{eq:goal}
  \mathcal N_h \lesssim h.
\end{equation}

\end{lem}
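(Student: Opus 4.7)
The plan is to run the auxiliary-problem argument already used in the proof of Proposition \ref{thm:hdg3}: for any $u_h\in V_h$ I would introduce $v_h\in V_h$ defined by
\begin{equation*}
(v_h,w_h)_{\we{\mathcal A_h}}=(u_h,w_h)_{\we{\we{\mathcal A_h}}}\qquad\forall w_h\in V_h,
\end{equation*}
so that $v_h=\we{\mathcal A_h}^{-1}\we{\we{\mathcal A_h}}u_h$ and $\mathcal N_h=\sup_{u_h}\|u_h-v_h\|_{\we{\mathcal A_h}}/\|u_h\|_{\we{\mathcal A_h}}$. Testing the defining identity with $w_h=u_h-v_h$ reduces \eqref{eq:goal} to the consistency estimate
\begin{equation*}
|a_h(I_hu_h,I_hw_h)-(\bm a\bmnabla u_h,\bmnabla w_h)|\lesssim h\,\|u_h\|_{\we{\mathcal A_h}}\|w_h\|_{\we{\mathcal A_h}}\qquad\forall u_h,w_h\in V_h,
\end{equation*}
which is the real content of the lemma.

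To prove this consistency estimate, I would first note that $I_hu_h|_F$ coincides with the $L^2(F)$-projection of $u_h|_F$ onto $P_k(F)$, and therefore $I_hu_h=P_T^{\partial}u_h$ on every $T\in\mathcal T_h$; thus the boundary data feeding the local problem \eqref{eq:local hdg} is $\lam=P_T^{\partial}u_h$. Let $\bar{\bm a}_T$ be a piecewise constant approximation of $\bm a$ on $T$ (e.g.\ its element average), so that $\|\bm a-\bar{\bm a}_T\|_{L^\infty(T)}\lesssim h_T$ under the hypothesis $\bm a\in[W^{1,\infty}(\mathcal T_h)]^{d\times d}$, and write $E:=\bm a^{-1}\bar{\bm a}_T-I$, which then satisfies $\|E\|_{L^\infty(T)}\lesssim h_T$. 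I would then verify, by the same integration-by-parts manipulations that drive Section \ref{ssec:hdg constant}, that the trial pair $\bmsigma^*:=\bar{\bm a}_T\bmnabla u_h$ (a constant vector on $T$, hence lying in $\bm W(T)$ for Types 1--2) together with $u^*:=P_{V(T)}u_h$ satisfies the second equation of \eqref{eq:local hdg} exactly (divergence of a constant vanishes, so $\alpha_T=0$ leaves nothing to check) and the first equation of \eqref{eq:local hdg} up to the residual $(E\bmnabla u_h,\bm\tau)_T$.

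The hard part is turning this residual into a control of $\bmsigma_{I_hu_h}-\bmsigma^*$. For that, I would invoke the well-posedness/inf--sup stability of the local mixed problem on the shape-regular element $T$ (the very property underlying \textbf{Assumption \ref{assum:a_h}}), which is uniform in $h_T$ by the standard scaling-to-reference-element argument for RT and BDM, to conclude
\begin{equation*}
\|\bmsigma_{I_hu_h}-\bar{\bm a}_T\bmnabla u_h\|_T\lesssim\|E\bmnabla u_h\|_T\lesssim h_T\|\bmnabla u_h\|_T,
\end{equation*}
and likewise for $w_h$. Expanding
\begin{equation*}
a_h(I_hu_h,I_hw_h)=\sum_{T\in\mathcal T_h}(\bm c\,\bmsigma_{I_hu_h},\bmsigma_{I_hw_h})_T
\end{equation*}
around $(\bar{\bm a}_T\bmnabla u_h,\bar{\bm a}_T\bmnabla w_h)$, using $\bm c\bar{\bm a}_T=I+E$ and $\|\bar{\bm a}_T-\bm a\|_{L^\infty(T)}\lesssim h_T$, and applying a discrete Cauchy--Schwarz across elements (so that $\sum_T h_T\|\bmnabla u_h\|_T\|\bmnabla w_h\|_T\le h\,\|\bmnabla u_h\|_\Omega\|\bmnabla w_h\|_\Omega$), then yields $a_h(I_hu_h,I_hw_h)=(\bm a\bmnabla u_h,\bmnabla w_h)+R(u_h,w_h)$ with $|R|\lesssim h\,\|u_h\|_{\we{\mathcal A_h}}\|w_h\|_{\we{\mathcal A_h}}$. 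Plugging this bound back into $\|u_h-v_h\|_{\we{\mathcal A_h}}^2=-R(u_h,u_h-v_h)$ delivers $\mathcal N_h\lesssim h$. The main obstacle is the local-residual estimate in step two: one must verify case-by-case that the candidate pair $(\bmsigma^*,u^*)$ lives in the correct $\bm W(T)\times V(T)$ for Type 1 ($k\ge 0$) and Type 2 ($k\ge 1$), and that the standard inf--sup stability for RT/BDM carries over uniformly in $h_T$ with the $\bm c$-weighted mass matrix (which is uniformly spectrally equivalent to the identity since $\bm a$ is uniformly SPD).
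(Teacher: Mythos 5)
Your proposal is correct and follows essentially the same route as the paper: freeze the coefficient on each element, show that the local HDG flux $\bmsigma_{I_hv_h}$ differs from a constant multiple of $\bmnabla v_h$ by $O(h_T)\norm{\bmnabla v_h}_T$, and assemble. The only (cosmetic) differences are that the paper obtains the local estimate by testing the error equation with $\bm\tau=\bmsigma_{\lam}-\bar{\bm c}^{-1}\bmnabla w$ and using coercivity of the $\bm c$-weighted mass term --- so your appeal to uniform inf--sup stability can be replaced by this one-line energy argument --- and that it concludes via the quadratic form $((\we{\we{\mathcal A_h}}-\we{\mathcal A_h})v_h,v_h)$ together with the symmetry of $\we{\mathcal A_h}^{-1}\we{\we{\mathcal A_h}}$ rather than your bilinear consistency estimate.
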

\begin{proof}
  For any $w\in P_1(T)$, set $\lam=P_T^{\partial}w$  in the local problem \eqref{eq:local hdg}. Then it is easy to show
  \begin{equation}
    \norm{\bmsigma_{\lam}}_T\lesssim\vertiii{\lam}_{h,\partial T}\sim\norm{\bmnabla w}_T.
    \label{eq:813}
  \end{equation}
On the other hand,  by  \eqref{eq:local hdg} we also have
  \begin{displaymath}
    (\bm c\bmsigma_{\lam}-\bmnabla w,\bmsigma_{\lam}-\bar{\bm c}^{-1}\bmnabla w)_T = 0
  \end{displaymath}
  with $\bar{\bm c}:=\frac{1}{|T|}\int_T\bm c$. Thus  it holds 
  \begin{displaymath}
    \begin{split}
      (\bar{\bm c}\bmsigma_{\lam}-\bmnabla w,\bmsigma_{\lam}-\bar{\bm c}^{-1}\bmnabla w)_T
      &=((\bar{\bm c}-\bm c)\bmsigma_{\lam},\bmsigma_{\lam}-\bar{\bm c}^{-1}\bmnabla w)_T
      + (\bm c\bmsigma_{\lam}-\bmnabla w,\bmsigma_{\lam}-\bar{\bm c}^{-1}\bmnabla w)_T\\
      &=((\bar{\bm c}-\bm c)\bmsigma_{\lam},\bmsigma_{\lam}-\bar{\bm c}^{-1}\bmnabla w)_T\\
      &\lesssim h_T\norm{\bmsigma_{\lam}}_T\norm{\bmsigma_{\lam}-\bar{\bm c}^{-1}\bmnabla w}_T,
    \end{split}
  \end{displaymath}
  where in the last "$\lesssim$" we have used the standard  estimate
\begin{displaymath}
  \norm{\bar{\bm c}-\bm c}_{L^{\infty}(T)}\lesssim h_T\norm{\bmnabla \bm c}_{L^{\infty}(T)}.
\end{displaymath}
Hence it follows
  \begin{equation}
    \norm{\bar{\bm c}\bmsigma_{\lam}-\bmnabla w}_T \lesssim h_T\norm{\bmsigma_{\lam}}_T,
  \end{equation}
 which implies
    \begin{equation}
    \begin{split}
      \norm{\bm c\bmsigma_{\lam}-\bmnabla w}_T
      &\leqslant\norm{(\bm c-\bar{\bm c})\bmsigma_{\lam}}_T + \norm{\bar{\bm c}\bmsigma_{\lam}-\bmnabla w}_T\\
      &\lesssim h_T\norm{\bmsigma_{\lam}}_T.
    \end{split}
  \end{equation}
  This estimate, together with   $\eqref{eq:813}$, yields
  \begin{equation}\label{eq:124}
    \norm{\bm c\bmsigma_{\lam}-\bmnabla w}_T
    \lesssim h_T\norm{\bmnabla w}_T.
  \end{equation}
  Finally, for any $v_h\in V_h,$   taking $w=v_h|_T$ in \eqref{eq:813} and \eqref{eq:124} with $\lambda=I_hv_h|_T$, from the definitions \eqref{De-I_h}-\eqref{De-A_h-We} and  \eqref{we-we-A-h}, it follows 
    \begin{displaymath}
    \begin{split}
      \left|((\we{\we{\mathcal A_h}}-\we{\mathcal A_h})v_h,v_h)\right|
      &=\left|\norm{\bmsigma_{I_hv_h}}^2_{\bm c}-\norm{\bm a\bmnabla v_h}^2_{\bm c}\right|\\
      &\leqslant(\norm{\bmsigma_{I_hv_h}}_{\bm c}+\norm{\bm a\bmnabla v_h}_{\bm c})
      \norm{\bmsigma_{I_hv_h}-\bm a\bmnabla v_h}_{\bm c}\\
      &\lesssim h\norm{v_h}^2_{\we{\mathcal A_h}},
          \end{split}
  \end{displaymath}
  which gives the desired estimate  \eqref{eq:goal}.
\end{proof}
\begin{rem}
  For {\bf Type 1} HDG method ($k=0$), if we  redefine $\we{\mathcal A_h}$ as
    \begin{displaymath}
    (\we{\mathcal A_h}u_h,v_h)=(\bar{\bm c}^{-1}\bmnabla u_h,\bmnabla v_h),~\forall u_h,v_h\in V_h,
  \end{displaymath}
  then it holds $\we{\mathcal A_h}=\we{\we{\mathcal A_h}}$ and $ \mathcal N_h =0$. This is a trivial modification
  of \cite{Chen;1994;a}.
\end{rem}
\par
Next we consider {\bf Type 3} HDG method.
\begin{thm}
  For {\bf Type 3} HDG method ($k\geqslant 1$), the estimate \eqref{eq:goal} holds.
\end{thm}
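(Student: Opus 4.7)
The plan is to follow the blueprint of the preceding lemma for Types 1-2, adapted to accommodate the penalty term $\alpha_T=O(1)$ of the Type 3 scheme; the assumption $k\geqslant 1$ is exactly what makes the adaptation go through. For any $w\in P_1(T)$, set $\lam:=P_T^\partial w$ in the local problem \eqref{eq:local hdg}. Assumption I immediately delivers $\norm{\bmsigma_\lam}_T\lesssim\vertiii{\lam}_{h,\partial T}\sim\norm{\bmnabla w}_T$, and since $w\in P_1(T)\subset P_k(T)=V(T)$ one has $P_T^\partial w=w|_{\partial T}$, so the boundary identity $(u_\lam-w)|_{\partial T}=P_T^\partial u_\lam-\lam$ holds automatically.

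The key new ingredient compared with the Types 1-2 argument is a modified orthogonality identity. Testing the first local equation with $\bm\tau=\bmsigma_\lam-\bar{\bm c}^{-1}\bmnabla w$ --- which lies in $\bm W(T)=[P_k(T)]^d$ because the constant vector $\bar{\bm c}^{-1}\bmnabla w$ sits in $[P_0(T)]^d\subset\bm W(T)$ --- and using $\mathrm{div}(\bar{\bm c}^{-1}\bmnabla w)=0$ give
\begin{displaymath}
(\bm c\bmsigma_\lam-\bmnabla w,\bmsigma_\lam-\bar{\bm c}^{-1}\bmnabla w)_T+(u_\lam-w,\mathrm{div}\,\bmsigma_\lam)_T=0.
\end{displaymath}
Since $u_\lam-w\in V(T)$, testing the second local equation with $v=u_\lam-w$ and using the boundary identity just mentioned yields
\begin{displaymath}
(u_\lam-w,\mathrm{div}\,\bmsigma_\lam)_T=\norm{\alpha_T^{1/2}(P_T^\partial u_\lam-\lam)}^2_{\partial T},
\end{displaymath}
so the two lines combine into the desired modified orthogonality
\begin{displaymath}
(\bm c\bmsigma_\lam-\bmnabla w,\bmsigma_\lam-\bar{\bm c}^{-1}\bmnabla w)_T+\norm{\alpha_T^{1/2}(P_T^\partial u_\lam-\lam)}^2_{\partial T}=0.
\end{displaymath}

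Adding the cross term $((\bar{\bm c}-\bm c)\bmsigma_\lam,\bmsigma_\lam-\bar{\bm c}^{-1}\bmnabla w)_T$ converts the first summand into $\norm{\bar{\bm c}^{1/2}(\bmsigma_\lam-\bar{\bm c}^{-1}\bmnabla w)}^2_T$; together with $\norm{\bar{\bm c}-\bm c}_{L^\infty(T)}\lesssim h_T$ and the estimate on $\norm{\bmsigma_\lam}_T$ above, this simultaneously yields $\norm{\bmsigma_\lam-\bar{\bm c}^{-1}\bmnabla w}_T\lesssim h_T\norm{\bmnabla w}_T$ and $\norm{\alpha_T^{1/2}(P_T^\partial u_\lam-\lam)}_{\partial T}\lesssim h_T\norm{\bmnabla w}_T$; a triangle inequality then gives $\norm{\bm c\bmsigma_\lam-\bmnabla w}_T\lesssim h_T\norm{\bmnabla w}_T$. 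Applying these element-wise estimates with $\lam=I_hv_h|_{\partial T}=v_h|_{\partial T}$ (valid since $I_hv_h|_F=v_h|_F$ for $k\geqslant 1$) and $w=v_h|_T$ and splitting
\begin{displaymath}
\left|((\we{\we{\mathcal A_h}}-\we{\mathcal A_h})v_h,v_h)\right|\leqslant\sum_{T\in\mathcal T_h}\left|\norm{\bmsigma_{I_hv_h}}^2_{\bm c}-\norm{\bm a\bmnabla v_h}^2_{\bm c}\right|+\sum_{T\in\mathcal T_h}\norm{\alpha_T^{1/2}(P_T^\partial u_{I_hv_h}-I_hv_h)}^2_{\partial T},
\end{displaymath}
I can treat the first sum exactly as in the Types 1-2 proof (via the factorization of a difference of squares) and bound the second by $\sum_T h_T^2\norm{\bmnabla v_h}^2_T$; both contributions are $\lesssim h\norm{v_h}^2_{\we{\mathcal A_h}}$, and because $\we{\mathcal A_h}^{-1}\we{\we{\mathcal A_h}}$ is $\we{\mathcal A_h}$-symmetric this gives $\mathcal N_h\lesssim h$. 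The main delicate point is the modified orthogonality: the penalty term appears with a favourable sign, so instead of merely being absorbed it actually supplies the bound on $\norm{\alpha_T^{1/2}(P_T^\partial u_\lam-\lam)}_{\partial T}$ for free --- and this is precisely where the hypothesis $k\geqslant 1$ is used, since it is needed to guarantee $w\in V(T)$ when testing the second local equation with $v=u_\lam-w$.
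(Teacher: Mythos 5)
Your proposal is correct and follows essentially the same route as the paper: the paper likewise derives the local error equations, tests them with $\bm\tau=\bmsigma_{\lam}-\bar{\bm c}^{-1}\bmnabla w$ and $v=u_{\lam}-w$, adds to obtain the identity $(\bm c\bmsigma_{\lam}-\bmnabla w,\bmsigma_{\lam}-\bar{\bm c}^{-1}\bmnabla w)_T+\bint{\alpha_T(u_{\lam}-\lam),u_{\lam}-\lam}{\partial T}=0$, and then introduces the $(\bar{\bm c}-\bm c)$ cross term to conclude $\norm{\bm c\bmsigma_{\lam}-\bmnabla w}_T+\alpha_T^{1/2}\norm{u_{\lam}-\lam}_{\partial T}\lesssim h_T\norm{\bmnabla w}_T$ and hence $\mathcal N_h\lesssim h$. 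Your additional remarks (why $k\geqslant 1$ is needed so that $v=u_{\lam}-w\in V(T)$, and the reduction of the boundary pairing to $\norm{\alpha_T^{1/2}(P_T^{\partial}u_{\lam}-\lam)}^2_{\partial T}$) only make explicit what the paper leaves implicit.
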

\begin{proof}
  Let $w\in P_1(T)$ and set $\lam=P_T^{\partial}w$   in the local problem \eqref{eq:local hdg}. It is easy to obtain
  \begin{subequations}
    \begin{eqnarray}
      (\bm c\bmsigma_{\lam}-\bmnabla w,\bm\tau)_T+(u_{\lam}-w,div\bm\tau)_T &=& 0,
                                                                                ~\forall\bm\tau\in\bm W(T),\label{986}\\
      -(v,div\bmsigma_{\lam})_T+\bint{\alpha_T(u_{\lam}-\lam),v}{\partial T} &=& 0,
                                                                                 ~\forall v\in V(T).\label{987}
    \end{eqnarray}
  \end{subequations}
  Taking $\bm\tau=\bmsigma_{\lam}-\bar{\bm c}^{-1}\bmnabla w$, $v=u_{\lam}-w$
  and adding
  \eqref{986} and \eqref{987}, we have
  \begin{equation}
    (\bm c\bmsigma_{\lam}-\bmnabla w,\bmsigma_{\lam}-\bar{\bm c}^{-1}\bmnabla w)_T+
    \bint{\alpha_T(u_{\lam}-\lam),u_{\lam}-\lam}{\partial T} = 0.
  \end{equation}
 This relation yields
  \begin{displaymath}
    \begin{split}
      &(\bar{\bm c}\bmsigma_{\lam}-\bmnabla w,\bmsigma_{\lam}-\bar{\bm c}^{-1}\bmnabla w)_T
      +\bint{\alpha_T(u_{\lam}-\lam),u_{\lam}-\lam}{\partial T}\\
      =&((\bar{\bm c}-\bm c)\bmsigma_{\lam},\bmsigma_{\lam}-\bar{\bm c}^{-1}\bmnabla w)_T\\
      \lesssim&h_T\norm{\bmsigma_{\lam}}_T\norm{\bmsigma_{\lam}-\bar{\bm c}^{-1}\bmnabla w}_T,
    \end{split}
  \end{displaymath}
  which implies
  \begin{displaymath}
    \norm{\bar{\bm c}\bmsigma_{\lam}-\bmnabla w}_T+\alpha_T^{\frac{1}{2}}\norm{u_{\lam}-\lam}_{\partial T}
    \lesssim h_T\norm{\bmsigma_{\lam}}_T.
  \end{displaymath}
  Hence it follows
    \begin{equation}\label{eq:543}
    \norm{\bm c\bmsigma_{\lam}-\bmnabla w}_T+\alpha_T^{\frac{1}{2}}\norm{u_{\lam}-\lam}_{\partial T}
    \lesssim h_T\norm{\bmsigma_{\lam}}_T,
  \end{equation}
  which, together with $\eqref{eq:813}$, shows
  \begin{equation}\label{eq:999999}
    \norm{\bm c\bmsigma_{\lam}-\bmnabla w}_T+\alpha_T^{\frac{1}{2}}\norm{u_{\lam}-\lam}_{\partial T}
    \lesssim h_T\norm{\bmnabla w}_T.
  \end{equation}
   Finally, for any $v_h\in V_h,$   taking $w=v_h|_T$ in \eqref{eq:813} and \eqref{eq:999999} with $\lambda=I_hv_h|_T$, from the definitions \eqref{De-I_h}-\eqref{De-A_h-We} and  \eqref{we-we-A-h}, it follows 
  \begin{displaymath}
    \begin{split}
      \bigg|((\we{\we{\mathcal A_h}}-\we{\mathcal A_h})v_h,v_h)\bigg|
      &=\bigg|\norm{\bmsigma_{I_hv_h}}^2_{\bm c}+\sum_{T\in\mathcal T_h}\alpha_T
      \norm{u_{I_hv_h}-I_hv_h}^2_{\partial T}-\norm{\bm a\bmnabla v_h}^2_{\bm c}\bigg|\\
      &\lesssim h\norm{v_h}^2_{\we{\mathcal A_h}},
    \end{split}
  \end{displaymath}
  which implies \eqref{eq:goal}. 
\end{proof}
\begin{rem}
  We note  that our analysis only requires the regularity estimate
 \eqref{eq:reg} with $\alpha \in [0,1]$, while  the analysis   in \cite{Cockburn;2014} requires
    $\alpha \in (0.5,1]$.
\end{rem}

Similar to Section \ref{ssec:hdg constant}, we summarize   this subsection as follows: 
\begin{itemize}
\item When the tensor
$\bm a$ is  not piecewise constant but piecewise smooth, the convergence of the two-level algorithm, {\bf Algorithm
\ref{algo:two-level}}, for the HDG methods can still be obtained, as long as the  mesh size $h$ is   small enough.
\end{itemize}


\subsection{Application to weak Galerkin methods}\label{sec:wg append}
In this subsection, we shall show our analysis can also be extended to the WG methods. Unless otherwise specified, we adopt the notations introduced in
section \ref{sec:notation}. 

Following \cite{Wang;2013}, we introduce the weak gradient operators as follows.
For any $T\in \mathcal T^h$, define $\bmnabla_w^i:L^2(T)\to\bm W(T)$ by
\begin{equation}
  (\bmnabla_w^iv,\bm q)_T = -(v,div\bm q)_T,~\forall\bm q \in\bm W(T),\forall v\in L^2(T),
\end{equation}
and $\bmnabla_w^b:L^2(\partial T)\to\bm W(T)$ by
\begin{equation}
  (\bmnabla_w^b\lam,\bm q)_T = \bint{\lam,\bm q\cdot\bm n}{\partial T},
  \forall\bm q\in\bm W(T),\forall\lam\in L^2(\partial T),
\end{equation}
where $\bm n$ denotes the unit outward normal vector to $\partial T$.

The WG framework for the model problem \eqref{eq:model} reads as follows(\cite{Wang;2013}):
seek $(u_h,\lam_h)\in V_h\times M_h$ such that
\begin{equation}\label{eq:wg appendix}
  \sum_{T\in\mathcal T_h}(\bm a(\bmnabla_w^iu_h+\bmnabla_w^b\lam_h),\bmnabla_w^iv_h+\bmnabla_w^b\mu_h)_T
  +s_h((u_h,\lam_h),(v_h,\mu_h))
  =(f,v_h), \ \forall (v_h,\mu_h)\in V_h\times M_h,
\end{equation}
where
\begin{equation}
  s_h((u_h,\lam_h,(v_h,\mu_h)):=
  \bndint{\alpha_T(P_T^{\partial}u_h-\lam_h),P_T^{\partial}v_h-\mu_h}.
\end{equation}
Denote $\bm\sigma_h:=\bmnabla_w^iu_h+\bmnabla_w^b\lam_h$, then the WG model \eqref{eq:wg appendix} is
equivalent to the following HDG-like scheme: seek $(u_h,\lam_h,\bm\sigma_h)\in V_h\times M_h\times\bm W_h$ such that
\begin{subequations}\label{eq:wg}
  \begin{eqnarray}
    (\bm\sigma_h,\bm\tau_h)+(u_h,div\bm\tau_h)-\bndint{\lam_h,\bm\tau_h\cdot\bm n}
    &=& 0, \ \forall \bm\tau_h\in \bm W_h,\\
    -(v_h,div_h(P_h^{\bm W}(\bm a\bm\sigma_h)))+\bndint{\alpha_T(P_T^{\partial}(u_h-\lam_h),v_h)}
    &=& (f,v_h),\ \forall v_h\in V_h,\\
    \bndint{P_h^{\bm W}(\bm a\bm\sigma_h)\cdot\bm n-\alpha_T(P_T^{\partial}u_h-\lam_h),\mu_h}
    &=& 0,\ \forall \mu_h\in M_h,
  \end{eqnarray}
\end{subequations}
where $P_h^{\bm W}:[L^2(\Omega)]^d
\to\bm W_h$ denotes the standard $L^2$-orthogonal projection operator.

We define the local problem as follows: for any $\lam\in L^2(\partial T)$, seek
$(u^{wg}_{\lam},\bmsigma^{wg}_{\lam})\in V(T)\times\bm W(T)$ such that
\begin{subequations}\label{eq:local wg}
  \begin{eqnarray}
    (\bmsigma^{wg}_{\lam},\bm\tau)_T+(u^{wg}_{\lam},div\bm\tau)_T
    &=& \bint{\lam,\bm\tau\cdot\bm n}{\partial T},\ \forall \bm\tau\in \bm W(T),\\
    -(v,div(P_T^{\bm W}(\bm a\bmsigma^{wg}_{\lam}))_T +
    \bint{\alpha_TP_T^{\partial}u^{wg}_{\lam},v}{\partial T}
    &=&\bint{\alpha_T\lam,v}{\partial T},\ \forall v\in V(T),
  \end{eqnarray}
\end{subequations}
where $P_T^{\bm W}:[L^2(T)]^d\to\bm W(T)$ denotes
the local $L^2$-orthogonal projection operator.

Similar to   Theorem 2.1 in \cite{Cockburn;unified HDG;2009},   the following proposition holds.
\begin{pro}
  Suppose $(u_h,\lam_h)\in V_h\times M_h$ solves the WG model \eqref{eq:wg appendix}, then $\lam_h$ can
  be obtained by solving the system
  \begin{equation}\label{eq:global wg}
    a_h^{wg}(\lam_h,\mu_h) = (f,u^{wg}_{\mu_h}),~\forall\mu_h\in M_h,
  \end{equation}
  where
  \begin{equation}
    a_h^{wg}(\lam_h,\mu_h)=(\bm a\bmsigma^{wg}_{\lam_h},\bmsigma^{wg}_{\mu_h})+
    \bndint{\alpha_T(P_T^{\partial}u^{wg}_{\lam_h}-\lam_h),P_T^{\partial}u^{wg}_{\mu_h}-\mu_h}.
  \end{equation}
\end{pro}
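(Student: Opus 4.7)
The plan is to mimic the proof of Theorem~2.1 in \cite{Cockburn;unified HDG;2009}, which performs the same hybrid elimination for the HDG system \eqref{discretization_hdg}, and to adjust the algebra to account for the extra $L^2$-projection $P_T^{\bm W}$ appearing in \eqref{eq:wg} and \eqref{eq:local wg}. First, on each $T\in\mathcal T_h$, I would introduce the auxiliary ``$f$-lift'' local problem: find $(u^\star_f,\bm\sigma^\star_f)\in V(T)\times\bm W(T)$ such that $(\bm\sigma^\star_f,\bm\tau)_T+(u^\star_f,\mathrm{div}\,\bm\tau)_T=0$ for all $\bm\tau\in\bm W(T)$ and $-(v,\mathrm{div}(P_T^{\bm W}(\bm a\bm\sigma^\star_f)))_T+\bint{\alpha_TP_T^\partial u^\star_f,v}{\partial T}=(f,v)_T$ for all $v\in V(T)$. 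By linearity and the well-posedness of \eqref{eq:local wg}, the WG solution then splits element-by-element as $u_h|_T=u^{wg}_{\lam_h}|_T+u^\star_f|_T$ and $\bm\sigma_h|_T=\bm\sigma^{wg}_{\lam_h}|_T+\bm\sigma^\star_f|_T$, which is exactly the HDG elimination strategy transported to the WG setting.

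Next I would substitute this decomposition into the third equation of \eqref{eq:wg} (the transmission condition tested against $\mu_h\in M_h$) and split the resulting sum into a ``homogeneous'' part coming from $(u^{wg}_{\lam_h},\bm\sigma^{wg}_{\lam_h})$ and a ``source'' part coming from $(u^\star_f,\bm\sigma^\star_f)$. For the homogeneous part, I would test the first equation of \eqref{eq:local wg} for $\mu_h$ with $\bm\tau=P_T^{\bm W}(\bm a\bm\sigma^{wg}_{\lam_h})\in\bm W(T)$ and its second equation for $\lam_h$ with $v=u^{wg}_{\mu_h}\in V(T)$, combine the two, and use the absorption identity $(P_T^{\bm W}(\bm a\bm q),\bm p)_T=(\bm a\bm q,\bm p)_T$ for $\bm p\in\bm W(T)$ to recast the resulting boundary sum as exactly $a_h^{wg}(\lam_h,\mu_h)$. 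Applying the same procedure to the $f$-lift problem tested against $(u^{wg}_{\mu_h},\bm\sigma^{wg}_{\mu_h})$ turns the source part into precisely $-(f,u^{wg}_{\mu_h})$. Collecting the two contributions and setting them equal to zero yields \eqref{eq:global wg}.

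The main technical obstacle is the bookkeeping around $P_T^{\bm W}$: unlike in the HDG case, $\bm a\bm\sigma^{wg}_{\lam_h}$ generally lies outside $\bm W(T)$, so it must be projected whenever it is paired with a divergence or with a normal trace against an element of $M_h$. Once the absorption identity above is invoked systematically, the rest of the argument is purely algebraic and mirrors the HDG template; the symmetry of $a_h^{wg}(\cdot,\cdot)$, implicit in \eqref{eq:global wg}, drops out of cross-testing the two local problems for $\lam_h$ and $\mu_h$ against one another.
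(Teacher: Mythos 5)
Your proposal is correct and follows essentially the same route the paper takes, namely the element-by-element hybrid elimination of Theorem~2.1 in \cite{Cockburn;unified HDG;2009} adapted to absorb the projection $P_T^{\bm W}$ (the paper itself only cites that reference rather than writing out the algebra). The one place your outline compresses slightly is the source term: reducing it to $-(f,u^{wg}_{\mu_h})$ requires the additional reciprocity step of testing the $f$-lift problem against $P_T^{\bm W}(\bm a\bmsigma^{wg}_{\mu_h})$ and the local problem for $\mu_h$ against $u^\star_f$ to cancel the leftover volume and penalty terms, which is exactly the cross-testing you invoke for symmetry, so the argument goes through.
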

\begin{rem}
  Similar to the HDG methods, once $\lam_h$ is resolved, $u_h$ and $\bmsigma_h$ in \eqref{eq:wg} can be obtained in
  an element-by-element fashion.
  \end{rem}
  
 When applying the two-level algorithm, {\bf Algorithm
\ref{algo:two-level}}, to   WG methods based the   model  \eqref{eq:global wg}, 
 we set the operator $R_h$ to be the symmetric Gauss-Seidel iteration or one sweep of Gauss-Seidel iteration. Similar to the HDG methods, one can easily show that  the symmetric Gauss-Seidel iteration always satisfies {\bf Assumption \ref{assum:R_h 1}}. 

When  $\bm a$ is a piecewise constant matrix,  from the HDG-like formulation \eqref{eq:wg} we can see that  
  the    WG framework \eqref{eq:wg appendix}  is essentially equivalent to
  the corresponding HDG framework.  As a result,   the convergence of the algorithm for he WG methods is as same as that for the corresponding HDG methods.

For more general case of $\bm a$, 
by using the same technique  as in \cite{Gopalakrishnan;2003,Cockburn;2014,Li;hdg;2014}
it is easy to verify that
\begin{equation}
  a_h(\lam_h,\lam_h)\sim a^{wg}_h(\lam_h,\lam_h),~\forall\lam_h \in M_h.
\end{equation}
Then {\bf Assumptions \ref{assum:a_h}} is obviously true for the WG methods.  Following the same  routines as in Section \ref{Variable coefficients}, one can derive
 the estimate \eqref{eq:goal}. Therefore, similar convergence results of {\bf Algorithm
\ref{algo:two-level}} for HDG methods  also hold for the WG methods.

\section{Numerical results}\label{sec:numerical}
In this section, we provide some numerical experiments in 2-dimensional case to support our theoretical analysis.
We only consider {\bf Type 3} HDG method with $\alpha_T =1,\forall T\in\mathcal T_h$. For more numerical results we refer
  to \cite{Cockburn;2014}.

In the first experiment, we set $\Omega = (-1, 1)\times (0, 1)\bigcup (0, 1)\times (-1, 0]$ and define
$\bm a(x, y) = \text{ diag } (a(x,y), a(x, y))$ with
\begin{displaymath}
  a(x,y) := \left\{
    \begin{array}{rrr}
      1, & -1<x<0,&0<y<1;\\
      5, & 0<x<1,&0<y<1;\\
      10,& 0<x<1,&-1<y<0.
    \end{array}
  \right.
\end{displaymath}
Given an initial triangulation $\mathcal T_0$ of $\Omega$, we produce a sequence of triangulations
$\{\mathcal T_j:j=1,2,\cdots,5\}$ by a simple procedure: $\mathcal T_{j+1}$ is obtained by connecting
the midpoints of each face of $\mathcal T_j$ for $j=0,1,\cdots,4$. $\mathcal T_0$ and $\mathcal T_1$
are presented in Figure \ref{Lmesh0_Lmesh1} for clarity. For each $\mathcal T_j$ ($j=1,2,\cdots,5$),
we set $\mathcal T_h=\mathcal T_j$ and construct $\we{\mathcal R_h}$   by using the standard $\cal V$-cycle
multigrid method based on the triangulations $\{\mathcal T_i:i=0,1,\cdots, j\}$, i.e.
$I-\overline{\we{\mathcal R_h}}\we{\mathcal A_h}$ denotes the error transfer operator of one $\cal V$-cycle
iteration. Here we set $\mathcal R_h$ and all smoothers encountered in the construction of
$\we{\mathcal R_h}$ to be the symmetric Gauss-Seidel method with $m_0$ and $m_1$ iterations respectively.
Using the standard nodal basis for $M_h$, we let $A_h$ be the stiffness matrix arising from the bilinear
form \eqref{eq:a_h hdg}. Suppose we are to solve $A_hx = b_h$ where $b_h$ is a zero vector, and we
take $x_0=(1,1,\cdots, 1)^t$ to be the initial value, rather than the zero vector   presented in
Algorithm \ref{algo:two-level}. We stop Algorithm \ref{algo:two-level} until the initial error,
i.e. $\sqrt{x_0^tA_hx_0}$, is reduced by a factor of $10^{-8}$.  The corresponding numerical results
(the number of iterations in  Algorithm \ref{algo:two-level}) are presented in Table \ref{tab:first experiment}.

The second experiment is a simple modification of the first one: we set $\mathcal R_h$ to be one
sweep of Gauss-Seidel iteration. The corresponding numerical results are presented in Table
\ref{tab:2nd experiment}.
\begin{figure}[H]
  \begin{center}
    \includegraphics[width=0.4\linewidth]{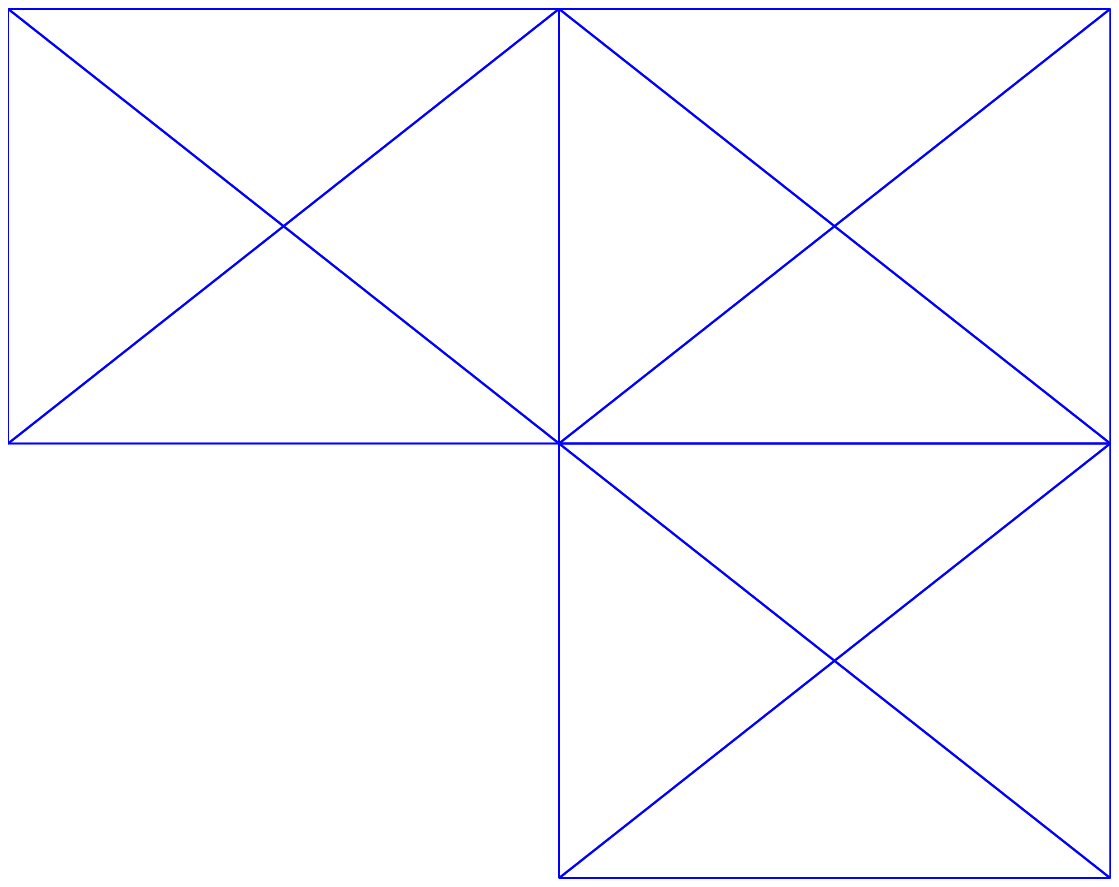}
    \includegraphics[width=0.4\linewidth]{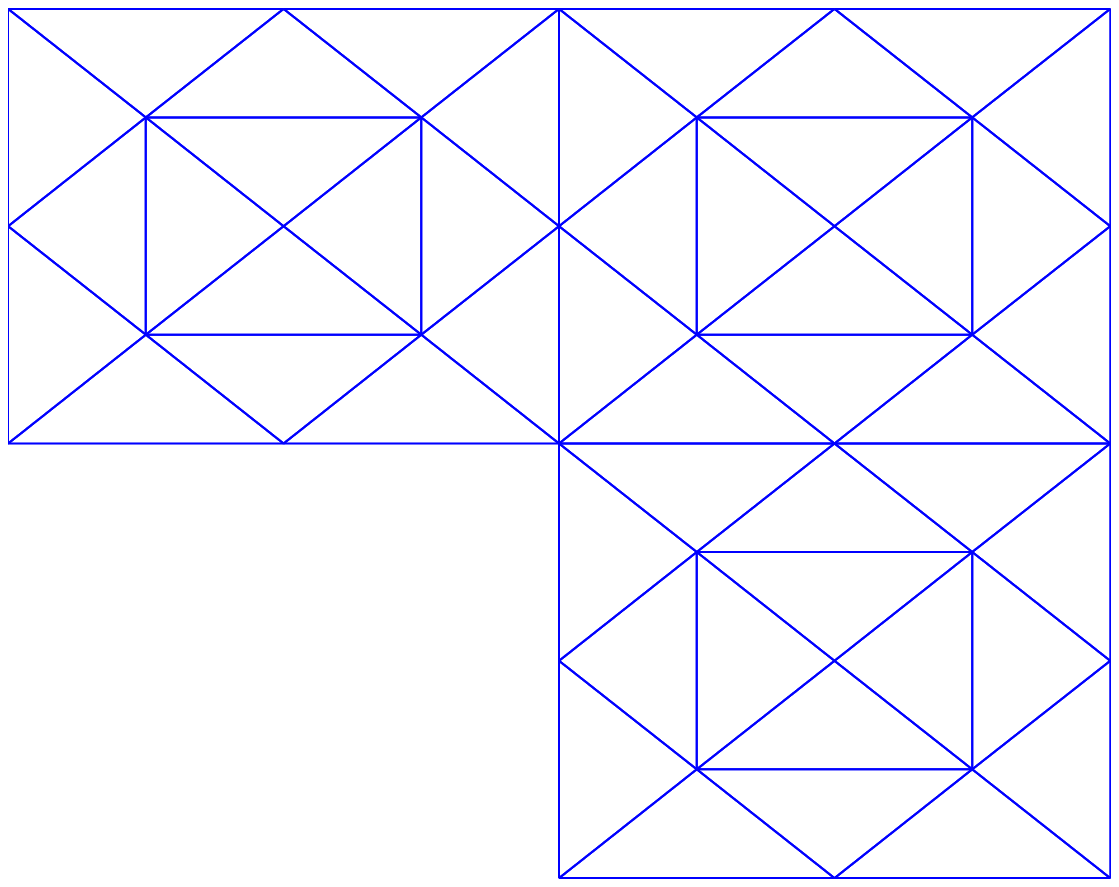}
  \end{center}
  \caption{$\mathcal T_0$ (left) and $\mathcal T_1$ (right)}\label{Lmesh0_Lmesh1}
\end{figure}
\par
\begin{table}[H]
  \ttabbox
  {
    \begin{tabular}{p{3.pt} p{3.pt} p{3.pt} p{3.pt} p{3.pt} p{3.pt} p{3.pt}}
      \toprule
      $k$ & $m_0$ & $\mathcal T_1$ & $\mathcal T_2$ & $\mathcal T_3$ & $\mathcal T_4$ & $\mathcal T_5$\\
      \midrule
      0 & 1 & 19 & 18 & 19 & 19 & 19 \\
          &2& 13 & 13 & 14 & 14& 15\\
          &3& 10 & 12 & 13 & 13 & 14\\
      \midrule
      1&1& 20 & 21 & 21 & 20 & 20\\
          &2& 13 & 14 & 14 & 15 & 15\\
          &3& 11 & 12 & 13 & 13 & 14\\
      \bottomrule
      \text{$\qquad\qquad m_1=1$}
    \end{tabular}
    \begin{tabular}{p{3.pt} p{3.pt} p{3.pt} p{3.pt} p{3.pt} p{3.pt} p{3.pt}}
      \toprule
      $k$ & $m_0$ & $\mathcal T_1$ & $\mathcal T_2$ & $\mathcal T_3$ & $\mathcal T_4$ & $\mathcal T_5$\\
      \midrule
      0 & 1 & 17 & 18 & 17 & 17 & 17 \\
          &2& 12 & 12 & 12 & 12& 12\\
          &3& 10 & 10 & 10 & 11 & 11\\
      \midrule
      1&1& 20 & 20 & 20 & 20 & 19\\
          &2& 12 & 13 & 13 & 13 & 12\\
          &3& 10 & 10 & 11 & 11 & 11\\
      \bottomrule
      \text{$\qquad\qquad m_1=2$}
    \end{tabular}
    \begin{tabular}{p{3.pt} p{3.pt} p{3.pt} p{3.pt} p{3.pt} p{3.pt} p{3.pt}}
      \toprule
      $k$ & $m_0$ & $\mathcal T_1$ & $\mathcal T_2$ & $\mathcal T_3$ & $\mathcal T_4$ & $\mathcal T_5$\\
      \midrule
      0 & 1 & 17 & 17 & 17 & 17 & 17 \\
          &2& 12 & 11 & 11 & 11& 11\\
          &3& 10 & 9 & 10 & 10 & 10\\
      \midrule
      1&1& 20 & 20 & 20 & 19 & 19\\
          &2& 12 & 13 & 12 & 12 & 12\\
          &3& 10 & 10 & 10 & 10 & 10\\
      \bottomrule
      \text{$\qquad\qquad m_1=3$}
    \end{tabular}
  }
  {
    \caption{Numerical results for the first experiment}\label{tab:first experiment}
  }
\end{table}
\begin{table}[H]
  \ttabbox
  {
    \begin{tabular}{p{10.pt} p{10.pt} p{10.pt} p{10.pt} p{10.pt} p{10.pt} p{10.pt}}
      \toprule
      $k$ & $m_1$ & $\mathcal T_1$ & $\mathcal T_2$ & $\mathcal T_3$ & $\mathcal T_4$ & $\mathcal T_5$\\
      \midrule
      0 & 1 & 22 &  24 & 24 & 23 & 23 \\
          & 2& 22 & 23 & 23 & 23 & 22\\
          &3& 21 & 23 & 23 & 22 & 22\\
      \midrule
      1&1& 34 & 34 & 34 & 34 & 34\\
          &2& 34 & 34 & 34 & 34 & 34\\
          &3& 34 & 34 & 34 & 34 & 34\\
      \bottomrule
    \end{tabular}
  }
  {
    \caption{Numerical results for the second experiment}\label{tab:2nd experiment}
  }
\end{table}
In the third experiment, we set $\Omega = (-1,1)\times(-1,1)$ and define $\bm a(x,y)=\text{diag}(a(x,y), a(x,y))$
with
\begin{displaymath}
  a(x,y):=\left\{
    \begin{array}{rrr}
      1, & -1 < x < 0, & -1 < y < 0;\\
      7, &  0 < x < 1, & -1 < y < 0;\\
      17,&  0 < x < 1, &  0 < y < 1;\\
      3, & -1 < x < 0, & 0 < y < 1.
    \end{array}
  \right.
\end{displaymath}
We show the first two triangulations $\mathcal T_0$ and $\mathcal T_1$ in Figure \ref{mesh0_mesh1}
and produce a sequence of triangulations $\{\mathcal T_j:j=0,1,\cdots,25\}$ in a successive way:
$\mathcal T_{j+1}$ ($j=2,3,\cdots,24$) is obtained by refining the smallest square containing the
origin in $\mathcal T_j$ (in $\mathcal T_1$, the vertexes of the square to refine is in red color)
as same as  what has been done from $\mathcal T_0$ to $\mathcal T_1$.  $\mathcal T_{25}$ is shown in Figure
\ref{mesh2}. The difference of the two-level algorithm between this experiment and the first one is
that we simply take $\we{\mathcal R_h}=\we{\mathcal A_h}^{-1}$ here. The corresponding numerical results
are presented in Table \ref{tab:third experiment}.
\begin{figure}[H]
  \begin{center}
    \includegraphics[width=0.4\linewidth]{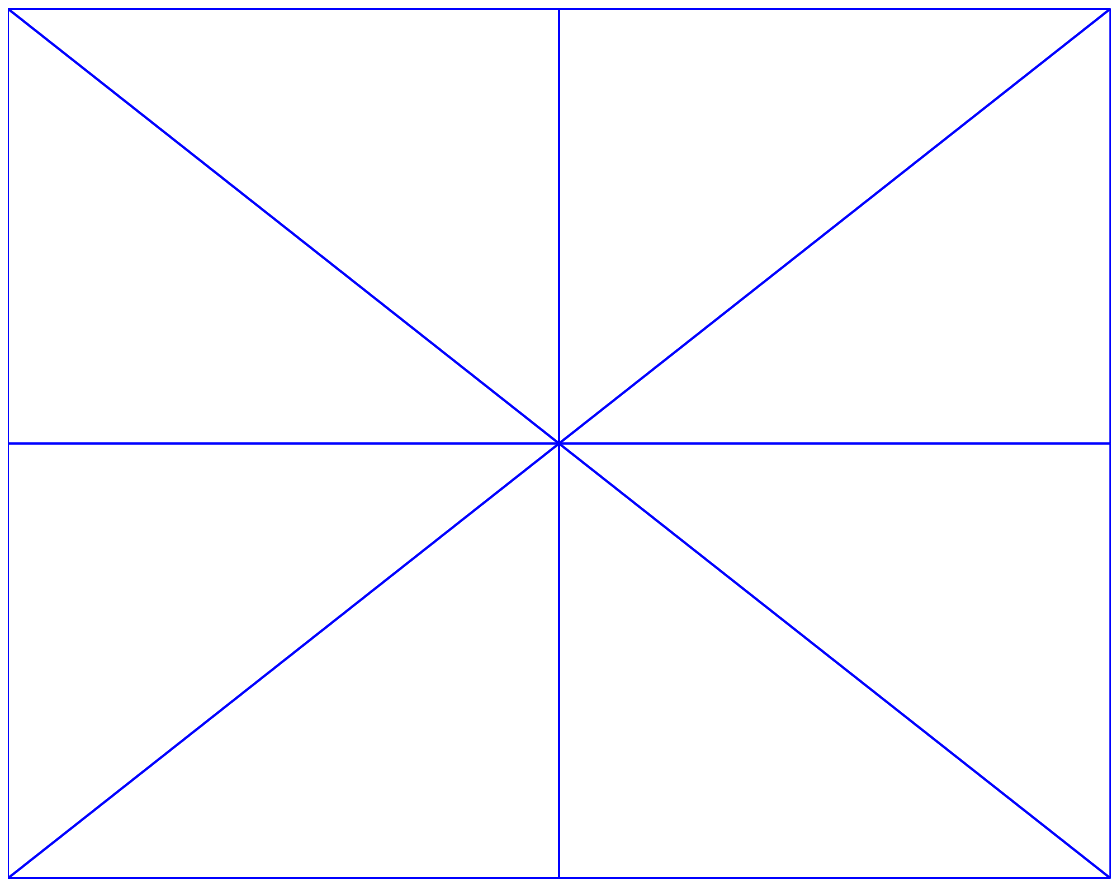}
    \includegraphics[width=0.4\linewidth]{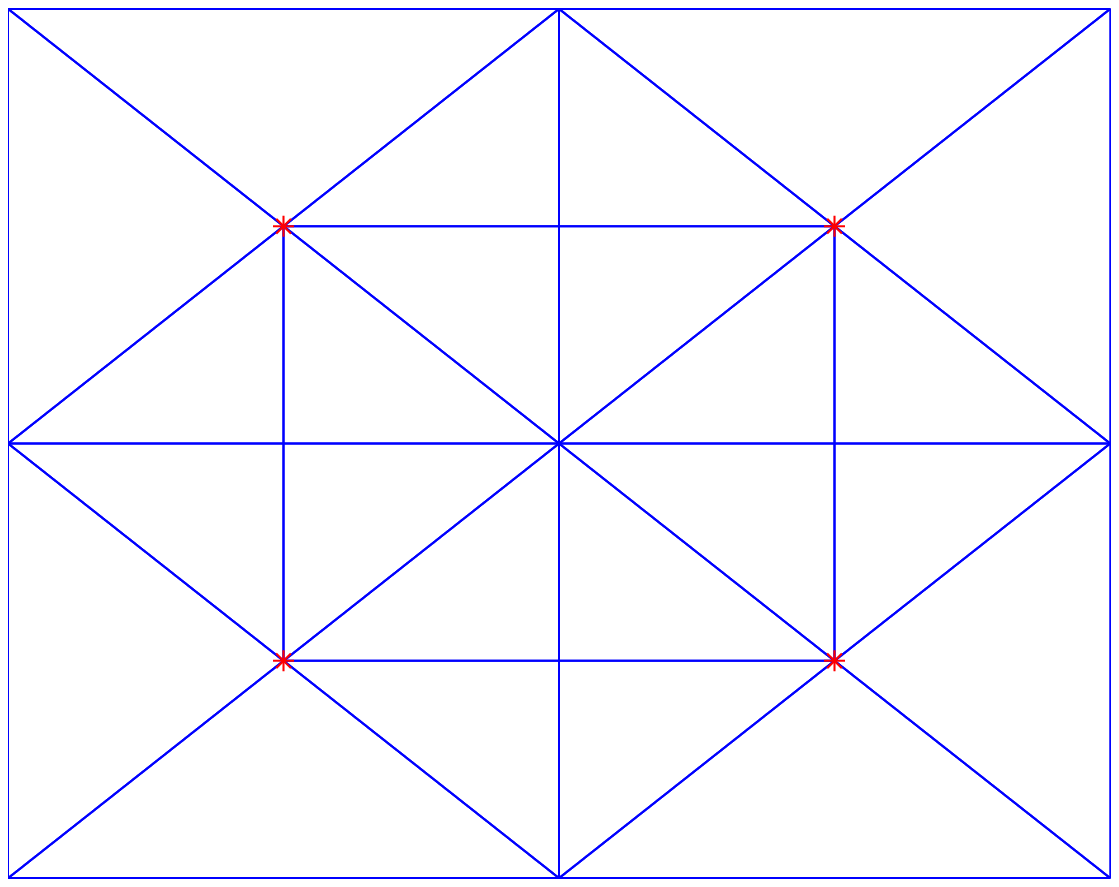}
  \end{center}
  \caption{$\mathcal T_0$ (left) and $\mathcal T_1$ (right)}\label{mesh0_mesh1}
\end{figure}
\par
\begin{figure}[H]
  \begin{center}
    \includegraphics[width=0.4\linewidth]{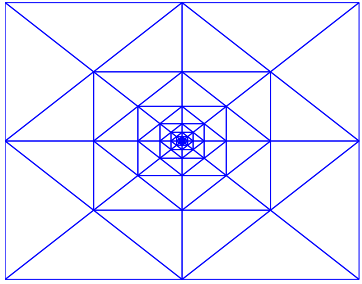}
  \end{center}
  \caption{$\mathcal T_{25}$}\label{mesh2}
\end{figure}
\begin{table}[H]
  \ttabbox
  {
    \begin{tabular}{p{10.pt} p{10.pt} p{10.pt} p{10.pt} p{10.pt} p{10.pt} p{10.pt} p{10.pt}}
      \toprule
      $k$ & $m_0$ & $\mathcal T_5$ & $\mathcal T_{10}$ & $\mathcal T_{15}$ & $\mathcal T_{20}$ & $\mathcal T_{25}$\\
      \midrule
      0 & 1 & 15 & 15 & 15 & 15 & 15\\
          & 2 & 12 & 12 & 12 & 12 & 12\\
          & 3 & 12 & 12 & 12 & 12 & 12\\
      \midrule
      1 & 1 & 30 & 30 & 30 & 30 & 30\\
          &2& 16 & 16 & 16 & 16 & 16\\
          &3& 12 & 12 & 12 & 12 & 12\\
      \bottomrule
    \end{tabular}
  }
  {
    \caption{Numerical results for the third experiment}\label{tab:third experiment}
  }
\end{table}

   For the first two examples, the regularity estimate \eqref{eq:reg} holds with only $\alpha\leqslant 0.5$,
  which violates the regularity requirement $\alpha\in (0.5,1]$ in \cite{Cockburn;2014}.
  For the third example, not only \eqref{eq:reg} holds with $\alpha\leqslant 0.5$,
  but also the triangulation is not quasi-uniform. However, for all the experiments, the numerical results
  are consistent with our theoretical results, which shows that our algorithm is
  convergent even when $\alpha$ is not greater than $0.5$ in  \eqref{eq:reg}
  and the triangulation is
  not quasi-uniform.
\appendix
\section{Analysis of   symmetric Gauss-Seidel iteration}
Let $\mathcal R_h$ be the symmetric Gauss-Seidel iteration. As stated in Remark \ref{Rem3.3},  we can show $\mathcal R_h$ satisfies  {\bf Assumption \ref{assum:R_h 1}}. Suppose {\bf Assumption \ref{assum:a_h}} is true. 
Then by the well-known properties of Gauss-Seidel iteration,
we know that \eqref{eq:assum R_hA_h} holds with $\omega=1$. Thus it remains to verify \eqref{eq:assum inv R_h}.

Let $\{\eta_i:i=1,2,\cdots, N\}$ be the standard nodal basis for $M_h$. Define $P_i:M_h\to\text{span}\{\eta_i\}$ by
\begin{equation}
  \ninpro{P_i\lam_h,\eta_i} = \ninpro{\lam_h,\eta_i},\quad  i=1,2,\cdots, N.
\end{equation}
 By Theorem 3 in \cite{Chen;2010}, we have
\begin{equation}\label{eq:1 appendix}
  \inpro{\mathcal R_h^{-1}\mu_h,\mu_h}=\norm{\mu_h}^2_{\mathcal A_h}+\inf_{\sum_{i=1}^N\mu_i=\mu_h}
  \sum_{i=1}^N\norm{P_i\sum_{j>i}\mu_j}^2_{\mathcal A_h},~\forall\mu_h\in M_h.
\end{equation}
Then, by using the same technique used in the proof of Lemma 3.7, we can obtain
\begin{equation}\label{eq:2 appendix}
  \inpro{\mathcal R_h^{-1}\mu_h,\mu_h}\lesssim\sum_{T\in\mathcal T_h}h_T^{-2}\norm{\mu_h}^2_{h,\partial T}.
\end{equation}
Denote $\mathcal S_h:=\mathcal R_h\mathcal A_h$. By the definition  \eqref{def_sysm_R} of $\overline{\mathcal R_h}$,
it holds
\begin{equation}
  \overline{\mathcal R_h}=2\mathcal R_h-\mathcal R_h\mathcal A_h\mathcal R_h
  =(2\mathcal S_h-\mathcal S_h^2)\mathcal A_h^{-1},
\end{equation}
which yields
\begin{displaymath}
  \begin{split}
    \inpro{\overline{\mathcal R_h}^{-1}\lam_h,\lam_h}
    &=\inpro{\mathcal\mathcal R_h^{-1} S_h(2\mathcal S_h-\mathcal S_h^2)^{-1}\lam_h,\lam_h}.
  \end{split}
\end{displaymath}
It is easy to verify that $\mathcal S_h$ is symmetric with respect to the inner product $\inpro{\mathcal R_h^{-1}\cdot,\cdot}$.
Then, from the inequality 
\begin{equation}
  t(2t-t^2)^{-1}<1\text{ for all $t\in (0,1)$}
\end{equation}
and   the fact that all the eigenvalues of $\mathcal S_h$ are in $(0,1)$, it follows 
\begin{equation}\label{eq:3 appendix}
  \inpro{\overline{\mathcal R_h}^{-1}\lam_h,\lam_h}\leqslant\inpro{\mathcal R_h^{-1}\lam_h,\lam_h},
\end{equation}
which, together with  \eqref{eq:2 appendix}, leads to the desired result   \eqref{eq:assum inv R_h}.


\begin{thebibliography}{100}\small
\bibitem{Aksoylu;2006} B. AKSOYLU, M. HOLST, Optimality of multilevel preconditioners for
  local mesh refinement in three dimensions, SIAM J. Numer. Anal., \textbf{44} (2006), 1005-1025.
  \par
\bibitem{ArnoldBrezzi1985} D. N. ARNOLD, F. BREZZI, Mixed and non-conforming finite element
  methods: implementation, post-processing and error estimates, Mod\'el. Math. Anal. Num\'er.,
  19 (1985), 7-35.
  \par
\bibitem{Bornemann;1993} F. BORNEMANN, B. ERDMANN, R. KORNHUBER, Adaptive multilevel methods in
  three space dimensions, Int. J. for Numer. Meth. in Eng., \textbf{36} (1993), 3187-3203.
  \par
\bibitem{Brandt;1977}  A. BRANDT, Multi-level adaptive solutions to boundary-value problems,
  Math. Comp., \textbf{31} (1977), 333-390.
  \par
\bibitem{Brandt;1982} A. BRANDT, S. F. MCCORMICK, J. W. RUGE, Algebraic Multigrid (AMG) for
  Automatic Multigrid Solution with Application to Geodetic Computations, Technical report,
  Institute for Computational Studies, Colorado State University, Fort Collins, CO, 1982.
  \par
\bibitem{Brandt;1986} A. BRANDT, Algebraic multigrid theory: The symmetric case, Appl. Math.
  Comput., \textbf{19} (1986), 23-56.
  \par
\bibitem{BrezziDouglasMarini1985} F. BREZZI, J. DOUGLAS, JR., L. D. MARINI, Two families of mixed finite elements for second
  order elliptic problems, Numer. Math., 47 (1985), 217-235.

\bibitem{Chen;1994;a} Z. CHEN, Equivalence between and multigrid algorithms for mixed and
  nonconforming methods for second order elliptic problems, East-West J. Numer. Math.,
  \textbf{4} (1994), 1-33.
  \par
\bibitem{Chen;2010} L. CHEN, Deriving the X-Z identity from auxiliary space method, In: The
  Proceedings for 19th Conferences for Domain Decomposition Methods, 2010.
  \par
\bibitem{Chen;2011} L. CHEN, R. H. NOCHETTO, J. XU, Optimal multilevel methods for graded bisection
  grids, Numer. Math., \textbf{120} (2011), 1-34.
  \par
\bibitem{Chen;2014} L. CHEN, J. WANG, Y. WANG, X. YE, An auxiliary space multigrid preconditioner
  for the weak Galerkin method, arXiv preprint arXiv:1410.1012, 2014.
  \par
\bibitem{Xu;2008} D. CHO, J. XU, L. ZIKATANOV, New estimates for the rate of convergence of the
  method of subspace corrections, Numer. Math. Theor. Meth. Appl., \textbf{1} (2008), 44-56.
  \par
\bibitem{COCKBURN-GOPALAKRISHNAN2004}B. COCKBURN, J. GOPALAKRISHNAN, A characterization of hybridized mixed methods for
  second order elliptic problems, SIAM J. Numer. Anal., 42 (2004), 283-301.
  \par
\bibitem{COCKBURN_2005}B. COCKBURN, J. GOPALAKRISHNAN, New hybridization techniques, GAMM-Mitt,
  2 (2005), 154-183.
\par
\bibitem{Cockburn-Gopalakrishnan-Wang2007}B. COCKBURN, J. GOPALAKRISHNAN, AND H. WANG, Locally conservative fluxes
  for the continuous Galerkin method, SIAM J. Numer. Anal., 45 (2007), 1742-1776.
  \par
\bibitem{super_con_LDG} B. COCKBURN, B. DONG, J. GUZM\'AN, A superconvergent LDG-hybridizable
  Galerkin method for second-order elliptic problems, Math. Comp., 77 (2008), 1887-1916.
  \par
\bibitem{Cockburn;unified HDG;2009} B. COCKBURN, J. GOPALAKRISHNAN, R. LAZAROV, Unified hybridization
  of discontinuous Galerkin, mixed, and conforming Galerkin methods for second order elliptic problems,
  SIAM J. Numer. Anal., \textbf{47} (2009), 1319-1365.
  \par
\bibitem{projection_based_hdg} B. COCKBURN, J. GOPALAKRISHNAN, F. J. SAYAS, A projection-based error
  analysis of HDG methods, Math. Comp., 79 (2010), 1351-1367.
\par
\bibitem{Cockburn;2014} B. COCKBURN, O. DUBOIS, J. GOPALAKRISHNAN, S. TAN, Multigrid for an HDG
  method, IMA Journal of Numerical Analysis, \textbf{34} (2014), 1386-1425.
  \par
\bibitem{Dahmen;1992} W. DAHMEN, A. KUNOTH, Multilevel preconditioning, Numer. Math., \textbf{63}
  (1992), 315-344.
  \par
\bibitem{Gopalakrishnan;2003} J. GOPALAKRISHNAN, A Schwarz preconditioner for a hybridized mixed
  method, Comput. Methods Appl. Math., \textbf{3} (2003), 116-134.
  \par
\bibitem{Gopalakrishnan;2009} J. GOPALAKRISHNAN, S. TAN, A convergent multigrid cycle for the
  hybridized mixed method, Numer. Linear Algebra Appl., $\bm{16}$ (2009), 689-714.
  \par
\bibitem{Li;mg_wg;2014} B. LI, X. XIE, A two-level algorithm for the weak Galerkin discretization
  of diffusion problems, arXiv preprint arXiv:1405.7506v3, 2014.
  \par
\bibitem{Li;hdg;2014} B. LI, X. XIE, Analysis of a family of HDG methods for second order elliptic
  problems, arXiv preprint arXiv:1408.5545, 2014.
  \par
\bibitem{Li;bpx;2014} B. LI, X. XIE, BPX preconditioner for nonstandard finite element methods
  for diffusion problems, arXiv preprint parXiv:1410.5332v1.
  \par
\bibitem{Livne;2004}O. E. LIVNE, Coarsening by compatible relaxation, Numer. Linear Algebra Appl.,
  \textbf{11} (2004), 205-227.
  \par
\bibitem{McCormick;1984} S. MCCORMICK, Fast adaptive composite gird (FAC) methods: theory for the
  variational case, In Defect correction methods (Oberwolfach, 1983), volume 5 of Comput. Suppl.,
  Springer, Vienna, 1984, 115-121.
  \par
\bibitem{McCormick;1986} S. F. MCCORMICK, J. W. THOMAS, The fast adaptive composite grid (FAC)
  method for elliptic equations, Math. Comp., \textbf{46} (1986), 439-456.
  \par
\bibitem{Mu-Wang-Wang-Ye}L. MU, J. WANG, Y. WANG,  X. YE, A computational study of the weak Galerkin method
  for second-order elliptic equations, arXiv:1111.0618v1, 2011, Numerical Algorithms, 2012,
  DOI:10.1007/s11075-012-9651-1.
  \par
\bibitem{Mu-Wang-Ye1}L. MU, J. WANG, X. YE, A weak Galerkin finite element methods with polynomial
  reduction, arXiv:1304.6481, submited to SIAM J on Scientific Computing.
  \par
\bibitem{Mu-Wang-Ye2}L. MU, J. WANG,   X. YE, Weak Galerkin finite element methods on polytopal meshes,
  arXiv:1204.3655v2, submitted to International J of Nmerical Analysis and Modeling.
  \par
\bibitem{RT}P.-A. RAVIART, J. M. THOMAS, A mixed finite element method for second order elliptic
  problems, Mathematical Aspects of Finite Element Method (I. Galligani and E. Magenes, eds.),
  Lecture Notes in Math. 606, Springer-Verlag, New York, 1977, 292-315.
  \par
\bibitem{Vanek;1996} P. VAN\v{E}K, J. MANDEL, M. BREZINA, Algebraic multigrid by smoothed
  aggregation for second and fourth order elliptic problems, Computing, \textbf{56} (1996), 179-196.
  \par
\bibitem{Wang;2013} J. WANG, X. YE, A weak Galerkin finite element method for second-order
  elliptic problems, J. Comp. Appl. Math., \textbf{241} (2013), 103-115.
  \par
\bibitem{Wu;2006} H. WU, Z. CHEN, Uniform convergence of multigrid V-cycle on adaptively refined
  finite element meshes for second order elliptic problems, Science in China: Series A Mathematics,
  \textbf{49} (2006), 1-28.
  \par
\bibitem{Xu;2002} J. XU, L. ZIKATANOV, The method of alternating projections and the method of
  subspace corrections in Hilbert space, J. Am. Math. Soc., \textbf{15} (2002), 573-597.
  \par
\end{thebibliography}
\end{document}